\documentclass[12pt]{amsart}

\usepackage{fancyhdr}
    \fancyhf{} 
\pagestyle{fancy}

\fancyheadoffset[loh,reh]{0mm}
\fancyhead[LE]{\small \thepage \hfil TORSION FUNCTION}
\fancyhead[RO]{\small
\hfill \thepage}
 \setlength{\headheight}{13.0pt}

\usepackage[T1]{fontenc}
\usepackage[latin9]{inputenc}
\usepackage{amsmath}
\usepackage{enumitem}
\usepackage{subcaption}
\usepackage{amssymb}
\usepackage{yhmath}
\usepackage{stmaryrd}
\usepackage{esint,color,xcolor}
\usepackage{babel}
\usepackage{graphicx} 
\usepackage{tikz}
\usepackage[margin=30mm]{geometry}
\usepackage{eucal,mathrsfs,dsfont}
\usepackage{url,verbatim,hyperref}


\newtheorem{thm}{Theorem}[section]

\newtheorem{lemma}[thm]{Lemma}

\newtheorem{prop}[thm]{Proposition}

\newtheorem{theorem}[thm]{Theorem}

\theoremstyle{definition}

\newtheorem{remark}[thm]{Remark}

\numberwithin{equation}{section}

\newcommand \Om{\Omega}
\newcommand{\eps}{\varepsilon}

\newcommand{\prt}{\partial}
\newcommand{\normal}{\mathbf{n}}
\newcommand{\wt}{\widetilde}

\newcommand{\E}{{\mathbb{E}}}

\newcommand{\calB}{\mathcal{B}}

\newcommand{\calT}{\mathcal{T}}

\newcommand{\C}{\mathbb{C}}
\newcommand{\calA}{\mathcal{A}}
\newcommand{\calS}{\mathcal{S}}
\renewcommand{\P}{{\mathbb{P}}}
\newcommand{\R}{{\mathbb{R}}}
\newcommand{\N}{{\mathbb{N}}}

\DeclareMathOperator{\mflim}{mf-lim}
\usepackage{tikz, hyperref}
\usetikzlibrary{calc}
\usepackage{pgfplots}

\begin{document}

\newcommand{\phanuel}[1]{{\color{blue} Phanuel says: #1}}
\newcommand{\chris}[1]{{\color{teal} Chris says: #1}}
\newcommand{\ilias}[1]{{\color{red} Ilias says: #1}}
\newcommand{\CYK}[1]{{\color{teal} CYK says: #1}}

\title[Torsion problem]{Geometric properties of optimizers for \\ the maximum gradient of the torsion function}
\author{Krzysztof Burdzy}
\address{Department of Mathematics, Box 354350, University of Washington, Seattle, WA 98195, U.S.A.}
\email{burdzy@uw.edu}

\author{Ilias Ftouhi}
\address{Laboratoire MIPA, N\^imes University, Site des Carmes, Place Gabriel P\'eri, 30000 N\^imes, France}
\email{ilias.ftouhi@unimes.fr}



\author{Phanuel Mariano}
\address{Department of Mathematics, Union College, Schenectady, NY 12308, U.S.A.}
\email{marianop@union.edu}


\begin{abstract}
Consider
$J(\Om):= \|\nabla u_\Om\|_\infty/\sqrt{|\Om|} $ and $J_P(\Om):= \|\nabla u_\Om\|_\infty/P(\Om) $,
where $\Om$ is a planar convex domain, $u_\Om$ is the torsion function, $P(\Omega)$ is the perimeter of $\Omega$ and $|\Om|$ its area. We prove that there exist planar convex domains that maximize the functionals $J$ and $J_P$, and any maximizer has a $C^1$ boundary that contains a line segment on which $|\nabla u_\Om|$ attains its maximum.   
\end{abstract}

\maketitle

\section{Introduction and main Results}
\label{Defs}
Let $\Omega\subset\mathbb{R}^{2}$ be a bounded domain. The torsion function $u_\Om$ is defined by
\begin{align}\label{eq:udef}
\begin{cases}
-\Delta u_\Om=1 & \text
{ in } \Omega,\\
\ \ \ \ \ u_\Om=0 & \text{ on } \partial\Omega.
\end{cases}
\end{align}
The study of this function originated with Saint-Venant in 1856 (\cite{de1856memoire}). 
Shape optimization problems for the torsion function have been studied for various functionals in the literature. Isoperimetric problems for $\left\Vert u_{\Omega}\right\Vert _{\infty}$ have been known as early as the work by Talenti \cite{Talenti-1976} for general domains, while results for convex domains have been studied as well (\cite{Makar-Limanov-1971,Payne-Philippin-1983}). We refer to \cite{keady} for a nice survey of qualitative properties of the solutions of the torsion problem. Spectral functionals involving the torsion function have been studied in works such as \cite{Banuelos-Carrol1994,Vandenberg-Carroll-2009,Payne-1981,Vogt-2019a}. There have also been many results regarding functionals for $\left\Vert u_{\Omega}\right\Vert _{1}$ known as the torsional rigidity, such as in \cite{Banuelos-Mariano-2024,Berg-Buttazzo-Pratelli-2021,zbMATH06464861,zbMATH07716571,ftouhi-2020,Henrot-Lucardesi-Philippin-2018,henrot_tor,zbMATH06708424} to name a few. Recently, there has been interest in studying the maximum of the gradient of the torsion function (see \cite{AIM2019,hoskins2021towards,huang,li-etall-torsion2025,li2024location}).  To save space, we refer the reader to two recent papers \cite{hoskins2021towards,huang} for more information on the long history of the problem and related results.

We will study the maximal rate of change of stress, namely $\|\nabla u_\Omega\|_\infty$.
To make the problem interesting, it is natural to restrict attention to convex domains $\Om$ and to scale them as follows. Let $|\Om|$ be the area of $\Om$ and let $P(\Om)$ be its perimeter. We consider 
\begin{align}\label{defJ}
    J(\Omega)=\frac{\|\nabla u_\Om\|_{\infty}}{|\Omega|^{\frac{1}{2}}},\qquad 
    J_P(\Omega)=\frac{\|\nabla u_\Om\|_{\infty}}{P(\Om)}.
\end{align}
Both functionals 
$J$ and $J_P$ are bounded from above; see, for example, \cite{huang}.

We will study the existence and properties of maximizers of $J$ over the class of convex bodies, i.e., convex domains  $\Omega^*\subset \R^2$ such that for every  convex $\Omega$, one has 
\begin{align}\label{defOptimizer}
    J(\Omega)\leq J(\Omega^*).
\end{align}
Similarly, we will provide results about maximizers of $J_P$ over the class of convex bodies, i.e., convex domains  $\Omega'\subset \R^2$ such that for every  convex $\Omega$, one has 
\begin{align}\label{defPOptimizer}
    J_P(\Omega)\leq J_P(\Omega').
\end{align}

The present paper has been inspired by an article by Hoskins and Steinerberger \cite{hoskins2021towards} and personal communications with the second author of that article. Among other results,  \cite{hoskins2021towards} contains a convincing numerical approximation to the maximizer of $J(\Om)$ over the family of planar convex domains, showing clearly a line segment on the boundary of the maximizer. 
Our own numerical approximations of the optimal shapes are given in Table \ref{tab:max_area_peri}. The colors represent 
$|\nabla u_{\Om^{*}}|/ \sqrt{|\Omega^{*}|}$ and $|\nabla u_{\Om'}|/ P(\Om')$, respectively. The pictures show that both $\Omega^{*}$ and $\Om'$ have $C^1$ boundaries containing line segments, in agreement with our Theorems \ref{main:thm} and \ref{th:perimeter}. The maximum values of $|\nabla u_{\Omega^{*}}|$ and $|\nabla u_{\Omega'}|$ occur in the middle of the line segments on the boundaries. More details on the numerical simulations and other results will appear in the work in progress \cite{progress}.


\begin{table}[h!]
\centering
\begin{tabular}{|c|c|}
\hline
\includegraphics[width=0.46\linewidth]{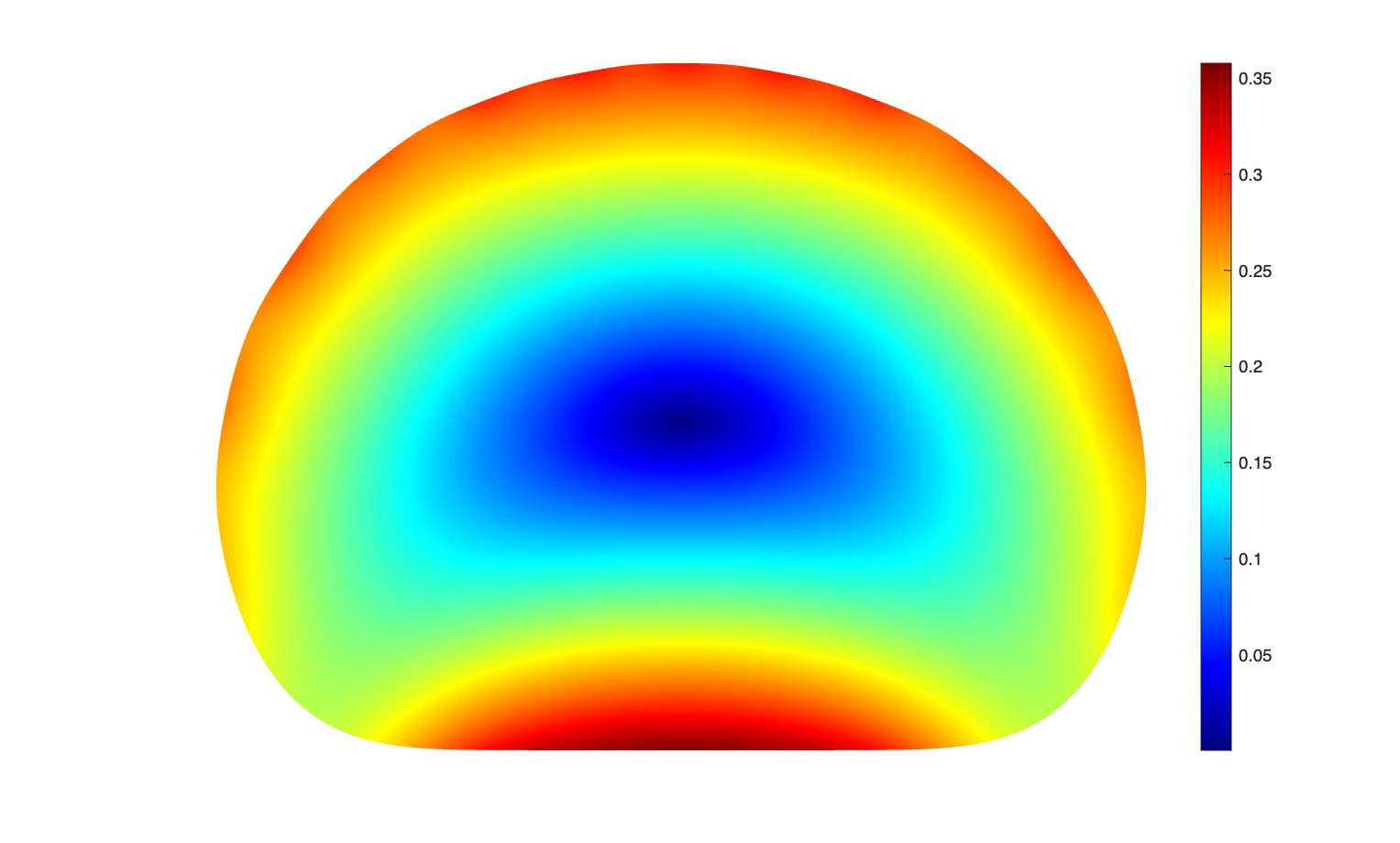} &
\includegraphics[width=0.48\linewidth]{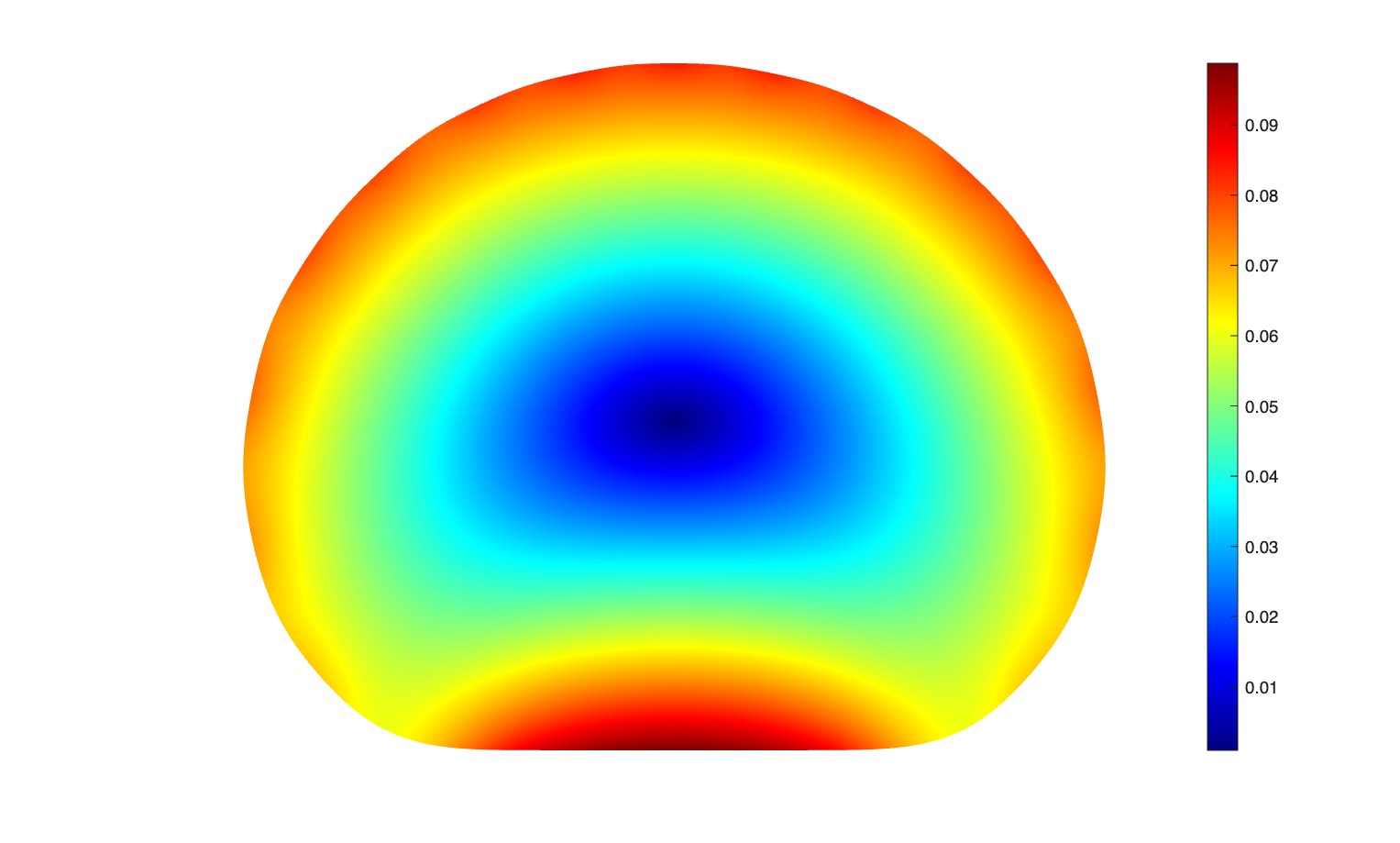} \\ 
\hline
 $J(\Om^*)\approx 0.3577...$ & $J_P(\Om')\approx 0.0988...$  \\
 \hline 
\end{tabular}
\caption{Maximizers of $J$ (on the left) and $J_P$ (on the right).}
\label{tab:max_area_peri}
\end{table}

We summarize our main results in the following theorems. 

\begin{theorem}\label{main:thm}
    \textit{(i)} There exists a convex maximizer satisfying \eqref{defOptimizer}.
    
    \textit{(ii)} If  $\Om$ is a maximizer,  then its boundary $\partial \Om$ is $C^1$, i.e., it has no corners.
    
    \textit{(iii)} If  $\Om$ is a maximizer,  then its boundary $\partial \Om$ contains a line segment.
\end{theorem}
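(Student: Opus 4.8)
The plan is to prove the three parts in order, since each builds on the previous. For existence (part (i)), I would first use scaling invariance: $J(t\Omega) = J(\Omega)$, so we may normalize, say $|\Omega| = 1$ or $\diam(\Omega) = 1$. Take a maximizing sequence $\Omega_n$ of convex bodies with $|\Omega_n| = 1$. The key geometric input is that a maximizing sequence must have bounded diameter — otherwise $\Omega_n$ becomes a long thin sliver, and for such domains the torsion function is essentially one-dimensional, giving $\|\nabla u_{\Omega_n}\|_\infty \approx (\text{width})/2 \to 0$ while $|\Omega_n| = 1$, so $J(\Omega_n) \to 0$, contradicting maximality (since $J$ is not identically zero, e.g. the disk gives a positive value). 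With $|\Omega_n| = 1$ and $\diam(\Omega_n)$ bounded, the inradius is also bounded below (John's theorem / Bonnesen-type inequalities for convex bodies), so up to translation and rotation the $\Omega_n$ lie in a fixed ball and Blaschke's selection theorem gives a subsequence converging in Hausdorff distance to a convex body $\Omega^*$ with $|\Omega^*| = 1$. Then I must show $J$ is upper semicontinuous along this sequence: Hausdorff convergence of convex bodies (with nondegenerate limit) implies $L^\infty$ (indeed $C^1_{loc}$ on the interior) convergence of the torsion functions and convergence of gradients, so $\|\nabla u_{\Omega_n}\|_\infty \to \|\nabla u_{\Omega^*}\|_\infty$ and $|\Omega_n| \to |\Omega^*|$; hence $J(\Omega^*) = \lim J(\Omega_n) = \sup J$.

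For part (ii), that any maximizer has $C^1$ boundary, I would argue by contradiction. Suppose $\partial\Omega$ has a corner at a point $x_0$, i.e. two supporting lines meeting at an angle $\alpha < \pi$. Near a convex corner the torsion function behaves like $u_\Omega(x) \sim c\, |x - x_0|^{\pi/\alpha}$ up to lower order, and since $\pi/\alpha > 1$ when $\alpha < \pi$, we get $\nabla u_\Omega(x) \to 0$ as $x \to x_0$. Thus the maximum of $|\nabla u_\Omega|$ is not attained near the corner. The idea is then to perform a local surgery: cut off the corner by a small chord (or more carefully, replace the corner region by a slightly rounded arc), producing a new convex domain $\tilde\Omega$. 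This strictly decreases $|\tilde\Omega|$ while, by a comparison/perturbation estimate, changing $\|\nabla u_{\tilde\Omega}\|_\infty$ by an amount that is of smaller order than the area decrease — the point being that the gradient maximum is attained away from the corner where $u$ depends only weakly on the far-away corner modification (via a barrier/maximum-principle estimate on $|u_{\tilde\Omega} - u_\Omega|$, which is $O(\delta^{2})$ in the sup norm if the corner cut has size $\delta$, hence its gradient on the relevant compact set changes by $O(\delta^2)$), while $|\Omega| - |\tilde\Omega| \asymp \delta^2$ but one can arrange the numerator to increase: actually a cleaner route is a dilation argument. Instead I would combine the corner-cut with a rescaling back to the original area: if cutting the corner removes area $c\delta^2$, rescale $\tilde\Omega$ by factor $(1 + c\delta^2/(2|\Omega|))$ so the area is restored; this multiplies $\|\nabla u\|_\infty$ by the same factor $> 1$, and we need the corner cut itself to have cost $o(\delta^2)$ on the gradient norm, which holds because the gradient max is attained at positive distance from the corner and $\|u_{\tilde\Omega} - u_\Omega\|_{C^1(K)} = o(\delta^2)$ there. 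So $J(\tilde\Omega^{\text{rescaled}}) > J(\Omega)$, a contradiction. The main obstacle here is making the estimate $\|u_{\tilde\Omega} - u_\Omega\|_{C^1(K)} = o(\delta^2)$ (or at least $\leq \varepsilon \delta^2$ for $\delta$ small) on a fixed compact set $K \ni$ argmax fully rigorous; elliptic regularity plus the representation of the difference as a harmonic function with boundary data supported near the corner, combined with the exponential decay of harmonic measure / Green's function estimates, should do it.

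For part (iii), that $\partial\Omega$ contains a line segment, I would again argue by contradiction: suppose $\partial\Omega$ is $C^1$ (by part (ii)) but contains no line segment, i.e. $\partial\Omega$ is strictly convex. Let $x_0 \in \partial\Omega$ be a point where $|\nabla u_\Omega|$ attains its maximum $M$; by Hopf's lemma $|\nabla u_\Omega(x_0)| = \partial_\nu u_\Omega(x_0) > 0$, and (using $C^1$ regularity and known results — $u_\Omega \in C^{1,\alpha}$ up to the boundary) the maximum of the boundary gradient is attained at $x_0$. The strategy is a local boundary perturbation that flattens $\partial\Omega$ near $x_0$: because $\partial\Omega$ is strictly convex, near $x_0$ it lies strictly inside its tangent line, so we may push the boundary outward near $x_0$ toward the tangent line, creating a (nearly) flat piece while remaining convex; this \emph{adds} area but should \emph{increase} $\|\nabla u_\Omega\|_\infty$ faster — intuitively, near a flatter boundary the torsion function is "larger" (it looks more like the half-plane solution $u = d(1 - d/w)/\dots$ with larger normal derivative) so the normal derivative at $x_0$ increases. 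The quantitative claim to establish is that a flattening perturbation of amplitude $\delta$ near $x_0$ increases $\partial_\nu u_\Omega(x_0)$ by an amount $\gtrsim \delta$ (linear in $\delta$ — e.g. via a Hadamard-type shape-derivative formula for the boundary gradient, where the curvature enters at first order with a favorable sign) while increasing the area only by $O(\delta \cdot \text{length}) $ but with a coefficient one can control; then rescaling to fix the area gives $J(\tilde\Omega^{\text{rescaled}}) > J(\Omega)$. The main obstacle in part (iii) is this shape-derivative computation and showing the sign: one needs that flattening the boundary near the argmax point strictly increases the maximal normal derivative at first order, which is plausible from the Makar-Limanov / Payne–Philippin $P$-function machinery (the function $P = |\nabla u|^2 + u$ and its behavior, relating $|\nabla u|$ on $\partial\Omega$ to the curvature there), and I would use that the normal derivative on a $C^2$ piece of the boundary satisfies an identity involving the curvature whose perturbation under flattening has the right sign. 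A subtlety is that the argmax point may move under perturbation, but since we only need the \emph{maximum} over the boundary to increase, an increase at a single well-chosen point where the old max is attained suffices, after checking the perturbed max is at least the perturbed value at the image of $x_0$.
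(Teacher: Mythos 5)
Your proposal gets the broad strategy right for all three parts (compactness for existence, corner-cutting for $C^1$, flattening near the argmax for the line segment), but there is a concrete gap in part~(i) that is in fact the crux of the paper's proof, and the quantitative claims in parts~(ii) and~(iii) are stated but not close to being established.

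\textbf{Part (i).} The step ``Hausdorff convergence of convex bodies implies $C^1_{\mathrm{loc}}$ convergence of torsion functions on the interior, so $\|\nabla u_{\Omega_n}\|_\infty \to \|\nabla u_{\Omega^*}\|_\infty$'' does not follow. Because $|\nabla u_\Omega|^2$ is subharmonic, the supremum is a boundary supremum, and local interior $C^1$ convergence only yields the inequality $\liminf_n \|\nabla u_{\Omega_n}\|_\infty \geq \|\nabla u_{\Omega^*}\|_\infty$, which merely recovers $J(\Omega^*) \leq J_{\max}$; what one needs is the reverse inequality, that the gradient supremum does not \emph{drop} in the limit. That fails if the points $z_n$ where $|\nabla u_{\Omega_n}|$ peaks converge to a boundary point of $\Omega^*$ where $\partial\Omega^*$ develops a corner (there $|\nabla u_{\Omega^*}|$ decays like $r^{\pi/\alpha - 1} \to 0$), or more generally if the gradient has no controlled modulus of continuity up to the boundary uniformly in $n$. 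The paper devotes most of Section~\ref{sec:existopt} (Steps 2--10) to exactly this: Step 2 rules out a corner at the accumulation point $0$ via an explicit mirror-coupling estimate against the cone $\wt C_{\alpha,\eta}$, and Steps 3--10 then show, through the domain decomposition $A_n$, the chain estimate \eqref{ag16.2}, and the probability bounds \eqref{ag15.3}, \eqref{eq:oc11.1}, that $u_{\Omega^*}(0,\alpha)/\alpha$ is bounded below by $(1-2\eps)(1+2\eps)^{-1} J_{\max}$ for all small $\alpha,\eps$. None of this is automatic, and your proposal needs to supply it.

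\textbf{Parts (ii) and (iii).} Your approach to (ii) --- cut a chord of size $\delta$ near the corner, show the gradient near the argmax changes by $o(\delta^2)$ while the area drops by $\asymp \delta^2$, then rescale --- is precisely the paper's strategy; the paper's $\eps^\beta$ estimate with $\beta = \pi/(2\alpha) + \pi/(2\alpha') > 1$ (cf.~\eqref{eq:ommintau}) is your $o(\delta^2)$ after the substitution $\eps = \delta^2$. But you flag the estimate $\|u_{\wt\Om}-u_\Om\|_{C^1(K)} = o(\delta^2)$ as ``the main obstacle'' and that is exactly where the work lies: the paper obtains it via the boundary Harnack principle for the torsion function (Proposition~\ref{Lem:hvsu}), which itself relies on the Lipschitz constant $<1$ and a nontrivial integrability argument for Martin kernels; you do not propose a substitute. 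For (iii), your shape-derivative heuristic is in the right spirit, but the scaling does not close as stated: the first-order gradient increase at $x_0$ due to an outward perturbation of amplitude $\delta$ on an arc of length $\ell$ will itself shrink as $\ell \to 0$, so ``$\gtrsim\delta$ uniformly in $\ell$'' with area cost $O(\delta\ell)$ is not obviously attainable; and the favorable sign of the curvature term in a Hadamard formula for the boundary gradient is asserted, not derived. The paper sidesteps shape derivatives entirely: it adds a sliver $\Om'$ to the \emph{side} of $0$ (with $N_1 \cap \Om' = \emptyset$), compares exit times via a mirror-coupling lemma (Lemma~\ref{lem:dilat}) to get $\E_x(\sigma(\Om_1)) \geq (1+c\,\delta/b_2)\E_x(\sigma(\Om))$ against an area increase $\leq c_1\delta b_2$, wins by taking $b_2$ small, and then converts the $u$-comparison into a gradient statement using Lemma~\ref{lem:supbound}(a)(iv). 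So (ii) is the same approach with the key estimate acknowledged but unproven, while (iii) is a genuinely different mechanism where your proposed route has an unresolved scaling issue.
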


\begin{theorem}\label{th:perimeter}
    \textit{(i)} There exists a convex maximizer satisfying \eqref{defPOptimizer}.
    
    \textit{(ii)} If  $\Om$ is a maximizer,  then its boundary $\partial \Om$ is $C^1$, i.e., it has no corners.
    
    \textit{(iii)} If  $\Om$ is a maximizer,  then its boundary $\partial \Om$ contains a line segment.
\end{theorem}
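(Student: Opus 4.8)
\textbf{Proof strategy for Theorem \ref{th:perimeter}.} The plan is to mirror the structure of the proof of Theorem \ref{main:thm} as closely as possible, since $J$ and $J_P$ differ only by replacing the normalization $|\Om|^{1/2}$ with $P(\Om)$, and both are scale-invariant of degree $0$ (recall $\|\nabla u_\Om\|_\infty$ scales like length, as does $P(\Om)$ and $|\Om|^{1/2}$). For part \textit{(i)}, I would work in a fixed normalization, say $P(\Om)=1$, and take a maximizing sequence $\Om_n$ of convex bodies. After translating so that each $\Om_n$ contains the origin, the isoperimetric inequality together with $P(\Om_n)=1$ bounds $|\Om_n|$ and hence, via John's theorem or a direct width estimate, the diameters stay bounded (a degenerating thin sequence would have $\|\nabla u_{\Om_n}\|_\infty\to 0$ by the standard bound $\|\nabla u_\Om\|_\infty \le C\,\inradius(\Om)$ or by comparison with a strip, making $J_P\to 0$, so such sequences are not maximizing). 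By Blaschke selection we extract a subsequence converging in Hausdorff distance to a convex body $\Om'$ with $P(\Om')\le 1$; lower semicontinuity of perimeter combined with continuity of $\Om\mapsto \|\nabla u_\Om\|_\infty$ under Hausdorff convergence of convex bodies (the torsion functions converge in $C^1_{loc}$, and the sup of the gradient is attained on the boundary and passes to the limit for convex domains) shows $J_P(\Om')\ge \limsup J_P(\Om_n)$, and a rescaling to restore $P=1$ can only increase $J_P$ if $P(\Om')<1$, so $\Om'$ is a maximizer. The one technical point requiring care is the semicontinuity direction of $\|\nabla u_\Om\|_\infty$: one needs that the maximum of $|\nabla u|$ does not drop in the limit, which for convex domains follows from the fact that the maximum is attained on the boundary at the point(s) closest to the "ridge" and these are stable under Hausdorff convergence — I expect this to be handled by the same lemma used for Theorem \ref{main:thm}.

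For parts \textit{(ii)} and \textit{(iii)}, the argument is a shape-perturbation / first-order optimality analysis and is essentially identical to the $J$ case, because near a maximizer the behavior of the numerator $\|\nabla u_\Om\|_\infty$ is what drives the geometry, and the denominators $P$ and $|\Om|^{1/2}$ are both smooth, monotone functionals of the domain. For \textit{(ii)}: suppose $\partial\Om'$ has a corner at a point $x_0$. The maximum of $|\nabla u_{\Om'}|$ is attained on $\partial\Om'$; I would show it cannot be attained at $x_0$ itself (near a convex corner the gradient of the torsion function vanishes, by the classical Lipschitz/Hölder estimates at a convex corner — interior angle $<\pi$ forces $|\nabla u|\to 0$), so the maximum is attained on a smooth portion of the boundary, away from $x_0$. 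Then "cutting the corner" — replacing a neighborhood of $x_0$ by a short chord — produces a convex domain $\wt\Om$ with strictly smaller perimeter and area, and by monotonicity of the torsion function under domain inclusion the value $\|\nabla u_{\wt\Om}\|_\infty$ is bounded below in terms of $\|\nabla u_{\Om'}\|_\infty$; quantifying this one gets $\|\nabla u_{\wt\Om}\|_\infty \ge \|\nabla u_{\Om'}\|_\infty - o(\text{cut size})$ while $P(\wt\Om) \le P(\Om') - c\,(\text{cut size})$ for a constant $c>0$ depending on the corner angle. Hence $J_P(\wt\Om) > J_P(\Om')$, a contradiction. This linear-versus-sublinear trade-off is the crux, and it is precisely where the $J_P$ case is in fact cleaner than the $J$ case, since $P$ decreases linearly in the cut length whereas $|\Om|^{1/2}$ decreases only like (cut length)${}^2$ — so if Theorem \ref{main:thm}\textit{(ii)} goes through, Theorem \ref{th:perimeter}\textit{(ii)} follows a fortiori by the same computation.

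For \textit{(iii)}: suppose $\partial\Om'$ contains no line segment, i.e., $\Om'$ is strictly convex. Let $M=\|\nabla u_{\Om'}\|_\infty$, attained at some boundary point $x_0$ where (by \textit{(ii)}) the boundary is $C^1$ with a well-defined tangent line $\ell$. I would exploit a local comparison: near $x_0$, strict convexity means $\Om'$ lies strictly to one side of $\ell$ and pulls away from $\ell$ quadratically, so the torsion function near $x_0$ is dominated by that of the half-plane bounded by $\ell$ only to leading order, and one can push the boundary slightly \emph{outward} near $x_0$ (flattening it toward $\ell$) to strictly increase $|\nabla u|$ at $x_0$ — intuitively, the gradient on the boundary is larger where the domain is "fatter" locally, and a flat piece maximizes this locally. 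Concretely, I would consider the inner-parallel or a localized outward normal perturbation supported near $x_0$ that makes the boundary flat on a small segment; this increases $\|\nabla u_\Om\|_\infty$ by a definite amount (using the Hopf-lemma / boundary-gradient monotonicity with respect to such perturbations) while changing $P$ by a controlled small amount, and then rescaling restores the normalization with a net gain. The main obstacle throughout is making the perturbation estimates for $\|\nabla u_\Om\|_\infty$ quantitative and one-sided in the right direction — i.e., establishing the sharp first-order (in)equalities for how the boundary gradient responds to corner-cutting and to flattening — but all of these are local elliptic estimates near the point where the maximum is attained, and by the degree-zero scaling the only difference from the $J$ analysis is bookkeeping of $P$ versus $|\Om|^{1/2}$, which, as noted, works in our favor. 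I would therefore present the proof of Theorem \ref{th:perimeter} as: \textit{(i)} a self-contained compactness argument, and \textit{(ii)}--\textit{(iii)} a remark that the perturbation lemmas proved for Theorem \ref{main:thm} apply verbatim, with the denominator estimate replaced by the (easier) linear perimeter estimate.
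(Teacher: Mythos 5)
Your high-level strategy is the same as the paper's: redo the proof of Theorem \ref{main:thm} with the perimeter playing the role of $|\Om|^{1/2}$, and the paper's Section \ref{sec:perim} indeed consists of nothing more than the substitute denominator estimates together with pointers back to Sections \ref{sec:existopt}--\ref{sec:line}. However, several of your supporting claims misstate where the difficulty lies and, in one place, get the direction of the comparison wrong.

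For \textit{(i)}, the statement that ``$\Om\mapsto\|\nabla u_\Om\|_\infty$ is continuous under Hausdorff convergence of convex bodies, since the torsion functions converge in $C^1_{\mathrm{loc}}$ and the boundary maximum passes to the limit'' is precisely the nontrivial content of Steps 2--10 in the proof of Theorem \ref{main:thm} \textit{(i)}; $C^1_{\mathrm{loc}}$ convergence in the interior gives no control of the gradient near $\prt\Om$, and the entire point of that argument is to rule out the scenario where a corner forms at the limiting maximizing point and the gradient collapses there. The paper does reuse that work for $J_P$ (and in the proof of Theorem \ref{th:perimeter} \textit{(i)} the only new input is that $|\Om_n|\not\to 0$, via \eqref{ineq:04_12}), but there is no single off-the-shelf lemma one can cite; your ``I expect this to be handled by the same lemma'' should be replaced by a genuine invocation of the ten-step existence argument. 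For \textit{(ii)}, monotonicity under inclusion $\wt\Om\subset\Om$ gives $u_{\wt\Om}\le u_\Om$, which bounds the torsion function from \emph{above} but says nothing directly about $\|\nabla u_{\wt\Om}\|_\infty$ from below; the lower bound is the delicate estimate \eqref{eq:afin} obtained from the boundary Harnack principle and Proposition \ref{Lem:hvsu}. Your heuristic comparison for this part is nevertheless sound: removing a cap of area $\eps$ near a corner of angle $\alpha<\pi$ reduces $P$ by at least $K_\alpha\sqrt\eps$ (see \eqref{nv25.1}) while the gradient drops only by $O(\eps^\beta)$ with $\beta>1$, and this trade-off is indeed more favorable than the $|\Om|^{1/2}$ one, so ``a fortiori'' is fair for \textit{(ii)}. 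For \textit{(iii)}, though, two things go wrong. First, the paper's perturbation is an \emph{outward} lens $\Om_1=\Om\cup\Om'$ obtained by shifting a sliver of $\Om$ and taking a convex hull, and the quantitative lower bound on the gradient gain (\eqref{hitLpp}, Lemma \ref{lem:dilat}) is proved by a mirror-coupling argument, not by a Hopf-type boundary derivative monotonicity; your ``flattening toward the tangent line'' description is a plausible heuristic but is not the construction and would need its own justification. Second, in this direction the perimeter denominator is \emph{harder}, not easier: adding the lens increases $P$ by $O(\delta)$ (see \eqref{nv26.1}) whereas it increases $|\Om|^{1/2}$ only by $O(\delta b_2)$ with $b_2$ small; the argument still closes because the gradient gain is of order $\delta/b_2\gg\delta$, but ``the (easier) linear perimeter estimate'' is the wrong sign of the comparison for part \textit{(iii)}.
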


The above results improve (some of) recent theorems proved in \cite{huang}, where it is shown that
if $\Om$ is a convex maximizer, then either the boundary of $\Om$ is not $C^{2+\eps}$ for some $\eps>0$ or
the points of zero curvature on $\prt\Om$ accumulate around the point at which
the gradient is maximal. The existence of a convex optimizer is proved neither in \cite{huang}
nor in any earlier paper, to our best knowledge.

On the technical side, we will prove a new version of the boundary Harnack principle
in Proposition \ref{Lem:hvsu}. Roughly speaking, the proposition says that, in a Lipschitz domain
with the Lipschitz constant less than 1, the torsion function is comparable to a harmonic function
with zero boundary values in a neighborhood of the boundary.

We organize the paper as follows. In Section \ref{sec:prelim}, we 
present notation, definitions, and review some known results.
Section \ref{sec:prelimres} contains several versions
of the boundary Harnack principle for harmonic functions.
A new boundary Harnack principle for the torsion function
is proved in Section \ref{sec:BHPtor}. Section \ref{sec:boundtor} is devoted
to the boundary behavior of the torsion function.
The three parts of Theorem \ref{main:thm} are proved in Sections
\ref{sec:existopt}, \ref{sec:nocorners} and \ref{sec:line}.
The proof of Theorem \ref{th:perimeter}, presented in Section \ref{sec:perim}, is short because it is
essentially the same as that of Theorem \ref{main:thm}.

\section{Preliminaries}\label{sec:prelim}

We start with some notation, conventions and observations.

We will use both complex and vector notation, identifying $\R^2$ and $\C$.
The standard basis in $\R^2$ will be denoted by $(\mathbf{e}_1,\mathbf{e}_2)$.
The area of a set $A\subset \R^2$ will be denoted by $|A|$,  its perimeter by $P(A)$, its diameter by $d(A)$, and its inradius (i.e., the radius of the largest disc contained in $A$) by $r(A)$.
 
By abuse of notation, we will also write $|x|$ to denote the norm of $x\in \R^2$.
We will denote balls in different norms as
\begin{align*}
    \calB(x,r)&=\{y:|x-y|<r\},\\
\calS(r)&= \{(x_1,x_2): \max(|x_1|,|x_2|)< r\}.
\end{align*}

Let $G(x,y)$ denote the Green's function in $\Om$ with Dirichlet boundary values. Then
the torsion function $u_\Om(x)$ defined in \eqref{eq:udef} can be represented as
\begin{align}\label{uGreen}
    u_\Om(x) = \int_\Om G(x,y) dy.
\end{align}
Actually, the equality in \eqref{uGreen} is ``up to a constant.'' We normalize the Green's function so that \eqref{uGreen} holds as stated.

A function $f:\R\to\R$ is called Lipschitz  with the Lipschitz constant $\lambda<\infty$ if
$|f(x)-f(y)|\leq \lambda |x-y|$ for all $x$ and $y$.
A domain $\Om$ is called Lipschitz with the Lipschitz constant $\lambda$ if for every boundary point, there is a neighborhood
and an orthonormal coordinate system such that $\prt \Om$ is the graph of a Lipschitz function with constant $\lambda$
 in this neighborhood. 

 Let $W_t$ denote a planar Brownian motion and let $\P_x$ and $\E_x$ denote its distribution
and the associated expectation corresponding to the starting point $W_0=x$. For a set $A\subset \R^2$, let $\tau(A) := \inf\{t\geq 0: W_t\in A\}$
and $\sigma(A):= \inf\{t\geq 0: W_t\notin A\}$.
If $A\subset \Om$ and $h$ is the positive harmonic function  
in $\Om\setminus A$ with zero boundary values on $\prt\Om$ and boundary values equal to 1
on $\prt A$ then $h(x) = \P_x(\tau(A) < \sigma(\Om))$ for $x\in \Om\setminus A$.

The torsion function has the following probabilistic representation, $u_\Omega(x)=\E_x(\sigma(\Omega) ) $ for any open set $\Omega$ and $x\in \Omega$. 
We note that Brownian motion's infinitesimal generator is one-half of the usual Laplacian, so the last formula holds
not for the torsion function defined in \eqref{eq:udef} but for its
constant multiple. This is irrelevant to our arguments so we will ignore
the factor $1/2$ in our formulas for typographical reasons.

The following definitions are adapted from \cite[p. 9]{BurdzyBook} and \cite{Doob}.  
Suppose that $\Om$ is a planar domain, $z\in\prt\Om$ and $y\in\Om$. A set $A\subset \Om$ is called minimal thin at $z$ if $\tau(A)>0$, a.s., where $\tau(A)$ is defined relative to Brownian motion starting from $z$ and conditioned to go to $y$ before hitting $\prt \Om$ (except at time 0). Note that the notion of minimal tinniness does not depend on  the choice of $y\in \Omega$. A set $A\subset \Om$ is called a minimal fine neighborhood of $z$ if $\Om\setminus A$ is minimal thin at $z$.

A function $f:\Om\to \R$ has a minimal fine limit $b$ at $z$ if $\lim_{t\downarrow 0} f(W_t) = b$,  a.s., where $W$ is Brownian motion starting from $z$ and conditioned to go to $y\in \Om$ before hitting $\prt \Om$ (except at time 0). Here also, we note that the notion of minimal fine limit is independent of the choice of the interior point $y\in \Omega$.

For future reference, we will now state some known results.

\begin{remark}\label{oldres}
\textit{(i)} A simple calculation (see \cite[p. 85]{Sperb-1981}) leads to 
\begin{align}\label{subhar}
\Delta|\nabla u_\Om|^{2} & =2\left((\prt^2_{xx} u_\Omega )^{2}+(\prt^2_{yy} u_\Omega )^{2} +(\prt^2_{xy} u_\Omega )^{2}+(\prt^2_{yx} u_\Omega )^{2}\right)\\
&\ge 2\left(\frac{1}{2}(\prt^2_{xx} u_\Omega +\prt^2_{yy} u_\Omega)^2 + (\prt^2_{xy} u_\Omega )^{2}+(\prt^2_{yx} u_\Omega )^{2} \right)\notag \\ 
&= 1 + 2\left( (\prt^2_{xy} u_\Omega )^{2}+(\prt^2_{yx} u_\Omega )^{2} \right)> 0.\notag
\end{align}

In other words, the function $|\nabla u_\Om|^{2}$ is subharmonic.
By the maximum principle, $|\nabla u_\Om|$ cannot reach its maximum
in the interior. 

\textit{(ii)} By the result of \cite{Makar} (see translation in \cite{Makar-Limanov-1971}), the superlevel sets of the function $u_\Om$ are convex. 

\textit{(iii)} The Blaschke selection theorem states that, given a sequence 
$K_{n}$ of convex bodies contained in a bounded set, there exists a subsequence  $(K_{n_{m}})$ and a convex body $K$ such that $(K_{n_{m}})$ converges to $K$ in the Hausdorff metric. See \cite[Sec. 6.3]{KellyWeiss} or \cite[Thm. 1.8.7]{schneider}.

\end{remark}

\section{Auxiliary results}\label{sec:prelimres}

\begin{lemma}\label{lem:BHPgeneral}
(i) There exists $c_1>0$ depending only on $\lambda<\infty$ such that if
 $f:\R\to\R$ is a Lipschitz function with the Lipschitz constant $\lambda$,
 $\Om=\{(x_1,x_2)\in \R^2: x_2 > f(x_1)\} $, $v\in \R$,
 and $z=(v,f(v))$,
then for all  functions $h_1$ and $h_2$ that are positive and harmonic in
    $\calB(z,2r)\cap \Omega$  and have zero boundary values on $\calB(z,2r)\cap \prt \Om$, we have for all $x,y\in \calB(z,r)\cap \Om$,   
    \begin{align*}
        \frac{h_1(x)}{h_1(y)}\geq c_1\frac{h_2(x)}{h_2(y)}.
    \end{align*}

(ii) 
There exists $c_2>0$ depending only on $\lambda<\infty$ and $m<\infty$ 
with the following property.
Suppose that 
$\Om$ is a Lipschitz domain with the Lipschitz constant $\lambda$ and
$A\subset\Om$. 
Suppose that there exist  points $x_1, x_2, \dots, x_m$ in the closure of $A$ 
and $r_j >0$, $j=1,2,\dots,m$,
such that $ \bigcup_{1\leq j\leq m} \calB(x_j,r_j) $ is connected,
$A\subset \bigcup_{1\leq j\leq m} \calB(x_j,r_j) $,
and for every $j=1,\dots,m$, either
\begin{enumerate}[label=(\alph*)]
    \item $\calB(x_j,2r_j)\subset \Om$, or
    \item $x_j\in \prt A \cap \prt \Om$ and $\prt \Om\cap \calB(x_j,2r_j)$
is  the graph of
a Lipschitz function with constant $\lambda$ in some orthonormal coordinate system.
\end{enumerate}

Suppose that $h_1$ and $h_2$ are positive and harmonic functions in
    $\Om\cap \bigcup_{1\leq j\leq m} \calB(x_j,2r_j)$ and have zero boundary values on $\bigcup_{1\leq j\leq m} \calB(x_j,2r_j) \cap \prt \Om$. Then, for $x,y\in A$,
    \begin{align*}
        \frac{h_1(x)}{h_1(y)}\geq c_2\frac{h_2(x)}{h_2(y)}.
    \end{align*}
\end{lemma}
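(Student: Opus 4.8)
The plan is to prove part (i) first — the comparison of two positive harmonic functions on a ball centered at a boundary point of a Lipschitz graph domain — and then bootstrap to part (ii) via a chain of overlapping balls and a Harnack chain argument. Part (i) is essentially the classical boundary Harnack principle (BHP) for Lipschitz domains, so I would invoke the standard result (e.g. Dahlberg, or the probabilistic treatment in \cite{BurdzyBook}): in the half-graph domain $\Om = \{x_2 > f(x_1)\}$ with Lipschitz constant $\lambda$, any two positive harmonic functions vanishing on $\calB(z,2r)\cap\prt\Om$ satisfy $h_1(x)/h_1(y) \asymp h_2(x)/h_2(y)$ for $x,y \in \calB(z,r)\cap\Om$, with the implied constant depending only on $\lambda$. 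The scale invariance of the Laplacian means the constant $c_1$ does not depend on $r$ or on the particular point $z$, only on $\lambda$; this is where I would be careful to state that the BHP constant for a Lipschitz graph is uniform over the graph once the Lipschitz constant is fixed. If one wants a self-contained argument, the key ingredients are: (a) a Carleson-type estimate — a positive harmonic function vanishing on a boundary piece is bounded near the boundary by a constant times its value at a ``corkscrew'' point a fixed distance into the domain; (b) Harnack's inequality to move between interior points along chains whose length is controlled by $\lambda$; (c) the boundary Hölder/oscillation estimate that follows from the exterior cone condition a Lipschitz domain satisfies.

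For part (ii), the strategy is a localization-plus-chaining argument. The set $A$ is covered by finitely many balls $\calB(x_j,r_j)$ whose doubles satisfy one of two alternatives: either the double ball sits inside $\Om$ (an interior ball), or $x_j$ is a boundary point and $\prt\Om$ is a Lipschitz graph in $\calB(x_j,2r_j)$ (a boundary ball). On each interior ball, interior Harnack directly gives $h_1(x)/h_1(y) \asymp h_2(x)/h_2(y)$ for $x,y$ in that ball, with constant depending only on the fixed ratio of radii $2$ (so an absolute constant). On each boundary ball, part (i) — applied in the local coordinate system in which $\prt\Om$ is a Lipschitz graph — gives the same comparison with constant depending only on $\lambda$. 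Then, since $\bigcup_j \calB(x_j,r_j)$ is connected and there are only $m$ of them, any two points $x,y \in A$ can be joined by a chain passing through at most $m$ of the balls; pick a point in each nonempty overlap $\calB(x_i,r_i)\cap\calB(x_j,r_j)$ to serve as a relay, and multiply the comparison inequalities along the chain. This produces $h_1(x)/h_1(y) \geq c_2 h_2(x)/h_2(y)$ with $c_2$ depending only on $\lambda$ and $m$ (it will be something like $(\min(c_1, c_{\mathrm{int}}))^{m}$ after accounting for the telescoping), and not on the geometry of $A$ beyond the stated covering data.

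I expect the main obstacle to be bookkeeping rather than a deep point: one must make sure that when chaining the per-ball comparisons, the ``relay'' points in the overlaps genuinely lie in $\Om$ (not accidentally outside it) and in the region where the relevant harmonic functions are defined and positive, and that the number of multiplications is bounded by $m$ regardless of how the overlaps are arranged. A secondary subtlety: in the boundary-ball case one applies part (i) to functions harmonic on $\calB(x_j,2r_j)\cap\Om$, but the hypothesis only gives harmonicity on $\Om\cap\bigcup_k\calB(x_k,2r_k)$, which contains $\calB(x_j,2r_j)\cap\Om$, so this is fine — but it is worth noting explicitly that part (i) is stated for the model half-graph domain, whereas here $\Om$ is only locally a Lipschitz graph near $x_j$; this is harmless because the BHP is a local statement (one can extend the local graph to a global one outside the ball without affecting $h_1,h_2$ on $\calB(x_j,r_j)$, or simply cite a local version of the BHP). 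Finally, I would double-check the ordering: the lemma only claims a one-sided inequality $h_1/h_1 \geq c\, h_2/h_2$, which by symmetry in $h_1 \leftrightarrow h_2$ also yields the reverse, so proving the one-sided bound suffices and the constants can be kept clean.
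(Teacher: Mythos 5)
Your proposal is correct and takes essentially the same route as the paper: cite a uniform boundary Harnack principle for Lipschitz graph domains to get part (i), then obtain part (ii) by interior Harnack plus a Harnack chain through the finitely many covering balls. The paper invokes \cite[Thm.~1.1]{chenPT} for the uniform BHP (emphasizing in the subsequent remark that the constant depends only on $\lambda$), whereas you invoke the classical Dahlberg/Ancona/Wu-style BHP or the probabilistic treatment; for the Laplacian these give the same uniform dependence on $\lambda$, so the difference is only which reference is cited, and your explicit handling of the localization (extending the local graph, relay points in overlaps, bookkeeping bound $m$) fills in details the paper leaves implicit.
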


\begin{proof}
    The lemma follows directly from \cite[Thm. 1.1]{chenPT}. 
    The setup in that paper is very general but it is straightforward
    to check that it applies in the context of part \textit{(i)}.
    
The usual Harnack inequality for harmonic functions inside balls
and the standard Harnack chain argument (see \cite[Def. 2.10]{chenPT})
 imply \textit{(ii)}.
\end{proof}

\begin{remark}
    Lemma \ref{lem:BHPgeneral} is a version of the well-known boundary Harnack principle (BHP),
    and merely a corollary of \cite[Thm. 1.1]{chenPT}. 
    The new and crucial features of the BHP proved in \cite[Thm. 1.1]{chenPT} is that
    it applies to very general operators and
    the constants $c_1$ and $c_2$ depend only on $\lambda$ and $k$. In other words, it is a ``uniform'' version of BHP.
\end{remark}

The following lemma is a highly specialized corollary of Lemma \ref{lem:BHPgeneral}
needed in the proof of Theorem \ref{main:thm}.

\begin{lemma}\label{lem:BHP}

Suppose that $\Om\subset\{(x_1,x_2): x_2>0\}$ is a convex domain, $0\in\prt \Om$
and $\Om$ does not have a corner at $0$.
For $b>0$, let
    \begin{align*}
\calS(r)&= \{(x_1,x_2): \max(|x_1|,|x_2|)\leq r\},\qquad r>0,\\
    N_1&= \calS(b/1000) \cap \Om,\\
    N_2 &= (\calS(6b)\setminus \calS(4b)) \cap \Om,\\
    R_1&= \prt \calS(5b) \cap \Om,\\
    R_2&= \prt\calS(7b) \cap\Om.
\end{align*}
See Fig. \ref{figBHP}.

\begin{figure} [h!]
\includegraphics[width=0.7\linewidth]{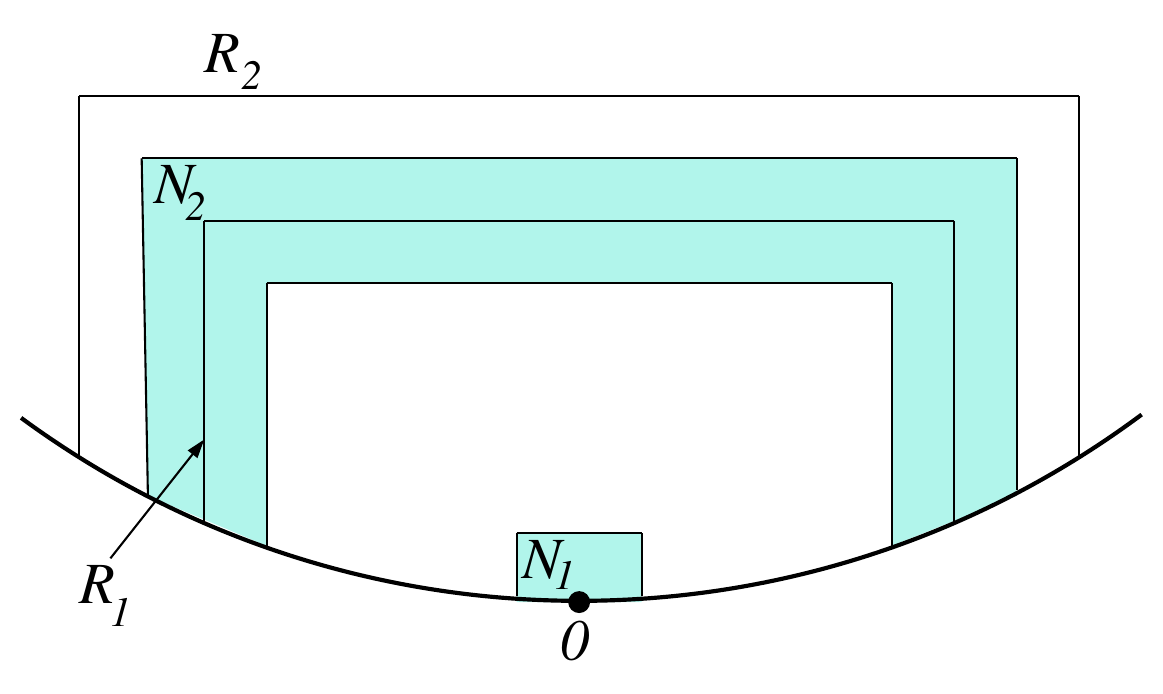}
\caption{
Subdomains of $\Om$. Drawing not to scale.}
\label{figBHP}
\end{figure}

(i)    Suppose that functions $h_1$ and $h_2$ are positive harmonic in
    $\calS(3b)\cap \Om$ and have zero boundary values on $\prt\Om$.
    There exist $\eps_1,c_1>0$ such that if $b\in(0,\eps_1)$ and
    $x,y\in N_1$ then
    \begin{align*}
        \frac{h_1(x)}{h_1(y)}\geq c_1\frac{h_2(x)}{h_2(y)}.
    \end{align*}
    
(ii)    Suppose that functions $h_1$ and $h_2$ are positive harmonic in
    $N_2$ and have zero boundary values on $\prt\Om$.
    There exist $\eps_1,c_1>0$ such that if $b\in(0,\eps_1)$ and
    $x,y\in R_1$ then
    \begin{align*}
        \frac{h_1(x)}{h_1(y)}\geq c_1\frac{h_2(x)}{h_2(y)}.
    \end{align*}
    
(iii)  Suppose that functions $h_1$ and $h_2$ are positive harmonic in
    $(\calS(8b)\setminus \calS(6.5 b))\cap \Om$ and have zero boundary values on $\prt\Om$.
    There exist $\eps_1,c_1>0$ such that if $b\in(0,\eps_1)$ and
    $x,y\in R_1$ then
    \begin{align*}
        \frac{h_1(x)}{h_1(y)}\geq c_1\frac{h_2(x)}{h_2(y)}.
    \end{align*}

\end{lemma}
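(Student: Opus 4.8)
The plan is to deduce all three parts from Lemma~\ref{lem:BHPgeneral}(ii): in each case I will cover the set on which $x$ and $y$ range by a bounded number of balls satisfying the hypotheses of that lemma, with a Lipschitz constant I control and with a bound on the number of balls that is uniform in $b$, and then simply quote the lemma (whose constant $c_2$ depends only on that Lipschitz constant and that number of balls).

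First a geometric reduction. Since $\Om$ is convex, contained in $\{x_2>0\}$, and $0\in\prt\Om$ with no corner there, the $x_1$-axis is the unique supporting line of $\Om$ at $0$, so $\prt\Om$ is differentiable at $0$ with horizontal tangent. Hence there are $\delta=\delta(\Om)>0$ and a convex function $f:[-\delta,\delta]\to[0,\infty)$ with $f(0)=0$ that is Lipschitz with constant $\lambda_0:=1/100$ (any fixed small constant would serve) and such that $\prt\Om\cap\calB(0,\delta)=\{(t,f(t)):|t|<\delta\}$ and $\Om\cap\calB(0,\delta)=\{(x_1,x_2)\in\calB(0,\delta):x_2>f(x_1)\}$. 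Set $\eps_1:=\delta/16$. Then for every $b\in(0,\eps_1)$ one has $\calS(8b)\subset\calB(0,\delta)$, so the part of $\prt\Om$ meeting $\calS(8b)$ is this near-flat graph, contained in the slab $\{0\le x_2\le 8\lambda_0 b\}$; moreover, by convexity, $\calS(r)\cap\Om$ is convex and $R_1,R_2$ are connected arcs running from one side of the relevant square, across its top, to the other side, with both endpoints on $\prt\Om=\mathrm{graph}(f)$. Fix such a $b$.

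Now fix one of the three parts, let $A$ be the set on which $x,y$ range, and let $U$ be the open set on which $h_1,h_2$ are assumed positive and harmonic: $A=N_1$, $U=\calS(3b)\cap\Om$ for (i); $A=R_1$, $U=N_2$ for (ii); and $A=R_2$, $U=(\calS(8b)\setminus\calS(6.5b))\cap\Om$ for (iii) — here the statement must intend $R_2$ rather than $R_1$, since $R_1$ does not meet $U$ in that case. Using the near-flatness together with convexity, I construct balls $\calB(x_1,r_1),\dots,\calB(x_m,r_m)$ with each $r_j$ comparable to $b$ and with $m$ bounded by an absolute constant $m_0$, such that $\bigcup_j\calB(x_j,r_j)$ is connected and contains $A$; $\bigcup_j\calB(x_j,2r_j)\cap\Om\subset U$ (so $h_1,h_2$ are positive and harmonic there and vanish on $\bigcup_j\calB(x_j,2r_j)\cap\prt\Om$); and, for every $j$, either $\calB(x_j,2r_j)\subset\Om$ or $x_j\in\prt A\cap\prt\Om$ and $\prt\Om\cap\calB(x_j,2r_j)$ is the graph of $f$, hence Lipschitz with constant $\lambda_0$. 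Concretely the centers run along $A$: interior balls are used wherever $A$ stays a definite distance $\gtrsim b$ above $\mathrm{graph}(f)$, and boundary balls, centered on $\mathrm{graph}(f)$, are used near the endpoints of $A$. Since everything happens at scale $b$ and $\prt\Om$ is within $\lambda_0$ of flat there, a covering that works for the model half-plane $\{x_2>0\}$ at scale $b$ — with radii slightly enlarged and boundary centers moved onto $\mathrm{graph}(f)$ — works here with an $m_0$ depending only on $\lambda_0$. Lemma~\ref{lem:BHPgeneral}(ii), applied with $\lambda=\lambda_0$ and $m=m_0$, then gives $h_1(x)/h_1(y)\ge c_1 h_2(x)/h_2(y)$ for all $x,y\in A$, with $c_1=c_2(\lambda_0,m_0)>0$ independent of $b$, which is the assertion.

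The only genuine work is verifying the geometric requirements on the covering, and this is where I expect the (minor) obstacle to lie. One must check that the doubled balls $\calB(x_j,2r_j)$ stay inside $U$; for (ii) and (iii) this means squeezing them into a thin shell, which forces the radii to be small — for (ii) one takes $2r_j<b$, using that $\prt\calS(5b)$ is at Euclidean distance $\ge b$ from $\prt\calS(4b)\cup\prt\calS(6b)$, and for (iii) one takes $2r_j<b/2$, using that $\prt\calS(7b)$ is at distance $\ge b/2$ from $\prt\calS(6.5b)\cup\prt\calS(8b)$ — with a little care near the corners of the $\ell^\infty$-squares $\prt\calS(kb)$, which are genuine corners; simultaneously the boundary balls must be centered at points of $\prt\Om$ inside $\calB(0,\delta)$, which is automatic once $b<\eps_1$. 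One also needs the bound $m\le m_0$ uniformly in $b$: this holds because the configuration at scale $b$, rescaled by $1/b$, lies in the fixed family of ``nearly flat'' configurations associated with the functions $s\mapsto f(bs)/b$, all of which are Lipschitz with the same constant $\lambda_0$, so one covering scheme serves all of them. All remaining estimates are elementary consequences of $f(0)=0$ and $|f'|\le\lambda_0$ with $\lambda_0$ small, and part (iii) becomes, after these adjustments, formally identical to part (ii).
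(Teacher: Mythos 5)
Your proposal is correct and follows the paper's own (one-line) approach of deducing all three parts from Lemma~\ref{lem:BHPgeneral}(ii) via a Harnack-chain covering whose Lipschitz constant and ball count are uniform in $b$; the key observation that rescaling by $1/b$ produces a fixed family of nearly flat configurations is exactly what makes the cited uniform BHP applicable. You also correctly spotted that in part (iii) the conclusion set must be $R_2=\prt\calS(7b)\cap\Om$ rather than $R_1$, since $R_1=\prt\calS(5b)\cap\Om$ is disjoint from $(\calS(8b)\setminus\calS(6.5b))\cap\Om$.
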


\begin{proof}
All parts of the corollary are straightforward applications of Lemma \ref{lem:BHPgeneral} \textit{(ii)}.
\end{proof}

\section{Boundary Harnack principle for torsion function}\label{sec:BHPtor}
 
\begin{prop}\label{Lem:hvsu}

Suppose that $\Om\subset\R^2$ is an open set and $A\subset \Om$ is a disc with a positive radius such that the distance from $A$ to $\prt \Om$ is greater than 0. Recall $u_\Om$ defined in \eqref{eq:udef}. 
Let $h:x\longmapsto \P_x(\tau(A)< \sigma(\Om))$. The function $h$ is the unique positive harmonic function in $\Om\setminus A$ with boundary values 1 on $\prt A$ and 0 on $\prt \Om$. We have:
\begin{enumerate}[label=(\roman*)]
    \item For some $c_0>0$ depending on $\Om$ and $A$, and all $x\in \Om\setminus A$,
\begin{align}\label{eq:hlessu}
 u_\Om(x) \geq c_0 h(x).
\end{align}
\item If $\Om$ is a bounded Lipschitz domain with the Lipschitz constant  $\lambda<1$, then, for
some $c_1>0$ depending on $\Om$ and $A$, and all $x\in \Om\setminus A$,
\begin{align}\label{eq:ulessh}
 h(x)\geq c_1 u_\Om(x) .
\end{align}
\end{enumerate}

\end{prop}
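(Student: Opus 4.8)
\textbf{Proof plan for Proposition \ref{Lem:hvsu}.}

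The plan is to treat the two inequalities by rather different, and asymmetric, arguments; the first is an elementary interior estimate, while the second is the substantive ``boundary Harnack for the torsion function'' that requires the Lipschitz constant to be strictly less than one. For part (i), I would argue probabilistically via the representations $u_\Om(x)=\E_x(\sigma(\Om))$ and $h(x)=\P_x(\tau(A)<\sigma(\Om))$. Since $A$ is a disc at positive distance $\delta$ from $\prt\Om$, on the event $\{\tau(A)<\sigma(\Om)\}$ the path reaches $A$, and from every point of $A$ there is a uniform lower bound, say $t_0>0$, on the expected exit time $\E_y(\sigma(\Om))$ (because $\Om$ contains a fixed ball around $A$); the strong Markov property at $\tau(A)$ then gives $u_\Om(x)=\E_x(\sigma(\Om))\ge \E_x(\sigma(\Om);\tau(A)<\sigma(\Om))\ge t_0\,\P_x(\tau(A)<\sigma(\Om))=t_0\,h(x)$, so $c_0=t_0$ works. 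Alternatively, one can note $u_\Om$ is positive, bounded below by a constant on $\prt A$ (maximum principle / Harnack inside a ball about $A$ contained in $\Om$) and superharmonic in $\Om\setminus A$, while $h\le 1$ there, and compare on the boundary of $\Om\setminus A$; I expect to use whichever phrasing is cleanest.

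For part (ii), the idea is to run the comparison locally near $\prt\Om$ using Lemma \ref{lem:BHPgeneral}, after observing that both $u_\Om$ and $h$ are (up to a harmless passage to a harmonic comparand) ``like'' positive harmonic functions vanishing on $\prt\Om$. The first step is to write $u_\Om = u_\Om^{(1)} + u_\Om^{(2)}$ where $u_\Om^{(1)}$ solves $-\Delta u^{(1)}=1$ on $\Om\setminus A$ with zero boundary data on $\prt(\Om\setminus A)=\prt\Om\cup\prt A$, and $u_\Om^{(2)}$ is harmonic in $\Om\setminus A$ with boundary values $0$ on $\prt\Om$ and $u_\Om|_{\prt A}$ on $\prt A$; since $u_\Om$ is bounded, $u_\Om|_{\prt A}$ is a bounded positive function, so by the usual BHP for harmonic functions (Lemma \ref{lem:BHPgeneral}) $u_\Om^{(2)}$ is comparable to $h$ near $\prt\Om$, say $u_\Om^{(2)}\le C h$ in a neighborhood $V$ of $\prt\Om$ inside $\Om\setminus A$. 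It therefore remains to bound $u_\Om^{(1)}$ by a constant times $h$ near $\prt\Om$, and this is exactly where $\lambda<1$ enters: I would build an explicit local barrier. In the local coordinate system where $\prt\Om$ is the graph of a Lipschitz function $f$ with constant $\lambda<1$, the function $d(x):=x_2-f(x_1)$ (distance ``in the vertical direction'' to the boundary) is positive in $\Om$, vanishes on $\prt\Om$, and because $\lambda<1$ one checks that $d$ is comparable to the Euclidean distance to $\prt\Om$; crucially one can choose a power or a slightly modified version of such a function that is \emph{superharmonic} near the boundary (the subtlety that a pure distance function need not be superharmonic is circumvented precisely when $\lambda<1$, where a function like $x_2 - f(x_1)$ composed with a suitable concave function, or the harmonic measure of a local Lipschitz subdomain, does the job). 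One then has, for a suitable local superharmonic $w\ge 0$ vanishing on $\prt\Om$ with $-\Delta w \ge 1$ near $\prt\Om$, that $u_\Om^{(1)}\le w + (\text{harmonic correction})$ by the maximum principle on a small boundary neighborhood, and then $w\lesssim h$ again by Lemma \ref{lem:BHPgeneral} since both vanish on $\prt\Om$ and are positive harmonic (or can be dominated by such). Patching the finitely many local neighborhoods covering the compact $\prt\Om$ via a Harnack chain, and handling the compact ``far from the boundary'' region $\Om\setminus(A\cup V)$ trivially (there $u_\Om$ is bounded and $h$ is bounded below by positivity and continuity), yields $u_\Om\le c_1^{-1}h$ on all of $\Om\setminus A$.

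The main obstacle I anticipate is the construction in the previous paragraph of the right local superharmonic majorant for $u_\Om^{(1)}$ — i.e., genuinely exploiting $\lambda<1$ to get a function comparable to $h$ from above near $\prt\Om$ that also dominates the particular solution of the Poisson problem. A clean route is: in the local chart, enlarge slightly to a Lipschitz domain $D$ with constant $\lambda<1$ for which the torsion function $u_D$ is explicitly controlled (e.g. $D$ a cone or a ``truncated graph domain''), use that $u_D$ vanishes on the relevant boundary piece and is positive, invoke Lemma \ref{lem:BHPgeneral} with $h_1=u_D$, $h_2=h$ to get $u_D\lesssim h$ locally — but $u_D$ is a torsion function, not harmonic, so strictly speaking one first replaces $u_D$ by its harmonic majorant in a slightly smaller neighborhood, or one applies the splitting trick to $u_D$ as well. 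Managing this ``torsion-vs-harmonic'' comparison recursively without circularity, and making sure the constants stay finite (they are allowed to depend on $\Om$ and $A$, which gives a lot of room), is the delicate bookkeeping; the geometric input $\lambda<1$ is what guarantees the barrier exists and is nondegenerate, and everything else is standard BHP machinery from Lemma \ref{lem:BHPgeneral} plus Harnack chains.
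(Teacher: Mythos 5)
Your proof of part (i) is correct and matches the paper's argument exactly: strong Markov at $\tau(A)$, plus a uniform lower bound on $\E_y(\sigma(\Om))$ for $y\in\prt A$.

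For part (ii) your approach diverges completely from the paper's, and it contains a genuine unresolved gap, which you yourself half-notice but misdescribe as "delicate bookkeeping." Your plan is to split $u_\Om=u_\Om^{(1)}+u_\Om^{(2)}$ on $\Om\setminus A$, dispose of $u_\Om^{(2)}$ by BHP, and then dominate the Poisson part $u_\Om^{(1)}$ by a local superharmonic barrier $w$ with $-\Delta w\ge 1$ and $w=0$ on $\prt\Om$, after which you want $w\lesssim h$ "by Lemma \ref{lem:BHPgeneral} since both vanish on $\prt\Om$ and are positive harmonic." But $w$ is not harmonic — it is strictly superharmonic — so Lemma \ref{lem:BHPgeneral} does not apply to it, and the fallback of "replacing it by a harmonic majorant" fails for a sign reason: a superharmonic function with zero boundary values on a piece of $\prt\Om$ dominates (not is dominated by) the harmonic function with the same interior boundary data. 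Getting a \emph{harmonic} function vanishing on $\prt\Om$ that bounds the torsion part from above is precisely the content of the inequality $h\gtrsim u_\Om$ you are trying to prove; the barrier route is circular. The geometric condition $\lambda<1$ is indeed the right hypothesis (near corners of opening angle $\le\pi/2$ the inequality is false because $h$ decays like $r^{\pi/\alpha}$ with $\pi/\alpha\ge 2$ while the torsion function decays no faster than $r^2$), but knowing why the hypothesis is needed is not the same as having a construction that uses it.

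The paper's argument for part (ii) is a proof by contradiction via Green's functions and Martin kernels, and it imports the $\lambda<1$ hypothesis through two cited potential-theoretic facts rather than a barrier. It supposes $\sup u_\Om/h=\infty$ along a sequence $y_k\to\prt\Om$, writes $u_\Om(y_k)=\int_\Om G(y_k,z)\,dz$, and uses Lemma \ref{lem:BHPgeneral} on an annulus around the nearest boundary point $v_k$ to compare $G(y_k,\cdot)$ with the normalized Martin kernel $K_k$, yielding $G(y_k,z)/G(y_k,z_1)\le c\,K_k(z)$ away from $\calB(v_k,2r_k)$. The "near" contribution $\int_{\Om\cap\calB(v_k,2r_k)}G(y_k,z)/G(y_k,z_1)\,dz$ is shown to vanish as $k\to\infty$ using an upper bound $O(r_k^2|\log r_k|)$ for the numerator and, crucially, the lower bound $G(y_k,z_1)\ge c\,r_k^\alpha$ with $\alpha<2$ from \cite[Prop.~2]{MS}, valid when $\lambda<1$. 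The contradiction hypothesis then forces $\int_\Om K_k\to\infty$, and summing a geometric combination of the $K_{2^{2j}}$ produces a positive harmonic function on $\Om$ that is not integrable, contradicting \cite[Thm.~2]{aikawa} (positive harmonic functions are integrable on a Lipschitz domain with constant $<1$). If you want to salvage your approach, the missing ingredient is precisely an independent quantitative estimate on the boundary decay of the torsion function relative to harmonic measure; absent that, the paper's Green function/Martin kernel route is the one that closes the argument.
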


\begin{proof}
\textit{(i)} It is easy to see that for some $c_0>0$ and all $y\in A$, $\E_y(\sigma(\Om)) \geq c_0$, so for $x\in \Om\setminus A$,
\begin{align*}
    u_\Om(x) &= \E_x(\sigma(\Om)) \geq \int _{\prt A}  \E_y(\sigma(\Om)) \P_x(W_{\tau(A)}\in dy, \tau(A) < \sigma(\Om))\\
    &\geq \int _{\prt A}  c_0 \P_x(W_{\tau(A)}\in dy, \tau(A) < \sigma(\Om))
    = c_0 \P_x (\tau(A) < \sigma(\Om)) =c_0 h(x).
\end{align*}
This proves \eqref{eq:hlessu}.

\textit{(ii)}
Let $G(x,y)$ be the Green's function in $\Om$ with Dirichlet boundary values.
Fix $z_1\in \Om$ and let $B =\{x\in \Om: G(z_1,x) \geq 1\}$.

Suppose that $\sup_{x\in \Om\setminus A} u_\Om(x) /h(x) =+\infty$.
We will show that this assumption leads to a contradiction.

Let $(y_k)_{k\in \N}$ be a sequence of points in $\Om$ such that $u_\Om(y_k)/h(y_k) > k$ for all $k$. The distance from $y_k$ to $\prt \Om$ must decrease to $0$ because
$u_\Om$ and $h$
are continuous and strictly positive functions in $ \Om\setminus A$
and $h$ has boundary values $1$ on $\prt A$. 

Let $v_k$ be one of the points in $\prt\Om$ with the smallest distance to $y_k$.
The Martin and Euclidean boundaries can be identified in Lipschitz domains
so let $K_k(x)$ be the Martin kernel in $\Om$ associated with $v_k$,
i.e., a positive harmonic function which vanishes continuously at every point of $\prt \Om$
except at $v_k$.
We normalize $K_k$ so that $K_k(z_1)=1$.

If $r_k = |y_k-v_k|$, Lemma \ref{lem:BHPgeneral} \textit{(ii)}
applied 
to the positive harmonic functions $z\longmapsto G(y_k, z)$ and $z\longmapsto K_k(z)$
in $(\calB(v_k, 3r_k)\setminus\calB(v_k, 2r_k))\cap \Om$
implies that there is $c_3>0$ such that for $z,z'\in (\calB(v_k, 3r_k)\setminus\calB(v_k, 2r_k))\cap \Om$,
\begin{align}\label{eq:GKold}
 \frac{1}{c_3} \frac{ G(y_k, z')}{K_k(z')} \leq    \frac{ G(y_k, z)}{K_k(z)} \leq c_3  \frac{ G(y_k, z')}{K_k(z')}.
\end{align}

The constant $c_3$ depends only on $\lambda$ because, by scaling and the Lipschitz property of $\Om$, one can choose $m$ in Lemma \ref{lem:BHPgeneral} \textit{(ii)} independent of $k$ in the present argument.

Fix any  $z'\in (\calB(v_k, 3r_k)\setminus\calB(v_k, 2r_k))\cap \Om$ and let $a_k =  G(y_k, z')/K_k(z')$.
Then \eqref{eq:GKold} yields
\begin{align}\label{oc19.1}
    \frac{1}{c_3} a_k K_k(z) \leq   G(y_k, z) \leq c_3  a_k K_k(z),
\end{align}
for all $z\in (\calB(v_k, 3r_k)\setminus\calB(v_k, 2r_k))\cap \Om$.

The functions $z\longmapsto G(y_k, z)$ and $z\longmapsto K_k(z)$ are positive and harmonic in
$\Om \setminus \calB(v_k, 2r_k)$. Their boundary values satisfy \eqref{oc19.1} on
$\prt \calB(v_k, 2r_k) \cap \Om$ and they are 0 on the remaining part of the boundary of $\Om\setminus\calB(v_k, 2r_k)$. 
Hence, \eqref{oc19.1} can be extended to all $z\in \Om\setminus\calB(v_k, 2r_k)$, in particular, it can be applied to $z_1$. Recall that $K_k(z_1)=1$. We have
\begin{align*}
  \frac{1}{c_3} a_k=  \frac{1}{c_3} a_k K_k(z_1) \leq   G(y_k, z_1) \leq c_3  a_k K_k(z_1) = c_3 a_k,
\end{align*}
so this, combined with \eqref{oc19.1} gives
\begin{align}\label{eq:GK}
    \frac{ G(y_k, z)}{G(y_k,z_1)} \leq \frac{c_3}{a_k}c_3 a_k K_k(z) = c_3^2 K_k(z),
\end{align}
for
$z\in \Om\setminus\calB(v_k, 2r_k)$.

The functions $x\longmapsto h(x)$ and $x\longmapsto G(z_1,x)$ are positive harmonic in $\Om \setminus (A\cup B)$, have zero boundary values
on $\prt \Om$ 
and have comparable boundary values on $\prt (A \cup  B)$  
so  for some $c_4>0$
and all $k$, $h(y_k)> c_4 G(z_1,y_k)$, and, therefore, $u_\Omega(y_k)/G(z_1,y_k) >c_4 k$.

In view of \eqref{uGreen} and \eqref{eq:GK}, 
\begin{align}\label{eq:comp11}
    c_4 k &<\frac{u_\Omega(y_k)}{G(z_1,y_k)} 
    = \frac{\int_\Om G(y_k, z)dz}{G(z_1,y_k)}
    =\int_\Om \frac{ G(y_k, z)}{G(y_k,z_1)}dz\\
   & = \int_{\Om\setminus\calB(v_k, 2r_k)} \frac{ G(y_k, z)}{G(y_k,z_1)}dz + \int_{\Om\cap\calB(v_k, 2r_k)} \frac{ G(y_k, z)}{G(y_k,z_1)}dz
   \notag\\
   & \leq c_3^2 \int_{\Om\setminus\calB(v_k, 2r_k)} K_k(z)dz + \int_{\Om\cap\calB(v_k, 2r_k)} \frac{ G(y_k, z)}{G(y_k,z_1)}dz.\notag
\end{align}

Let $\rho$ be twice the diameter of $\Om$ and let $G_k(x,y)$ be the Green's function in $\calB(y_k,\rho)$ with Dirichlet boundary values.
Note that $\calB(v_k, 2r_k)\subset \calB(y_k, 3r_k)$ and $3r_k < \rho$ for large $k$.
We have for some $c_5,c_6<\infty$ and sufficiently large $k$,
\begin{align}\label{eq:upper}
    \int_{\Om\cap\calB(v_k, 2r_k)} G(y_k, z)dz
    &\leq \int _{\calB(y_k, 3r_k)} G_k(y_k,z)dz
    \leq c_5\int_0^{3r_k} r |\log r| dr 
    \leq c_6 r_k^2 | \log r_k|.
\end{align}

By \cite[Prop. 2]{MS}, if $\lambda < 1$  then for some $c_7>0$ and $\alpha<2$,
\begin{align*}
    G(y_k,z_1) \geq c_7 r_k^\alpha.
\end{align*}

This and \eqref{eq:upper} imply that 
\begin{align*}
     0\leq \int_{\Om\cap\calB(v_k, 2r_k)} \frac{ G(y_k, z)}{G(y_k,z_1)}dz
     \leq \frac{c_6 r_k^2 | \log r_k|}{c_7 r_k^\alpha} = (c_6/c_7) r_k^{2-\alpha}|\log r_k|.
\end{align*}

The quantity on the right-hand side converges to 0 as $k\to+\infty$ because $r_k\underset{k\to +\infty}{\longrightarrow} 0$.
This observation and \eqref{eq:comp11} imply that for sufficiently large $k$,
\begin{align*}
\int_{\Om} K_k(z)dz\geq 
    \int_{\Om\setminus\calB(v_k, 2r_k)} K_k(z)dz \geq \frac{c_4 k}{2c_3^2}.
\end{align*}

Since the kernels $K_k$ were normalized so that $K_k(z_1)=1$, the
function $K_*:z\longmapsto\sum_{j=1}^\infty 2^{-j} K_{2^{2j}}(z)$
is finite, positive and harmonic. We have
\begin{align*}
\int_{\Om} K_*(z)dz\geq \sum_{j=1}^\infty 2^{-j} \frac{c_4 2^{2j}}{2c_3^2}=\infty.
\end{align*}

This contradicts  \cite[Thm. 2]{aikawa} which asserts that all positive harmonic functions are integrable
in $\Om$ if  $\lambda<1$.
\end{proof}

\section{Boundary behavior of the torsion function}\label{sec:boundtor}

For the definition of minimal fine topology and the corresponding convergence, see Section \ref{sec:prelim}. 

\begin{lemma}\label{lem:supbound}
Suppose $\Om\subset \R^2$ is a bounded open convex domain.

\begin{enumerate}[label=(\alph*)]
    \item There exists a set $\prt_r \Om \subset\prt\Om$ with the full harmonic measure such that the following holds for $z\in \prt_r \Om$.\medskip

\begin{enumerate}[label=(\roman*)]
    \item The boundary of $\Om$ does not have a corner at $z$. We will denote the unit inner normal vector  $\normal(z)$.
    \item The  minimal fine limit $ \mflim_{x\to z, x\in\Om} |\nabla u_\Om(z)|$ exists. We will denote it $|\nabla u_\Omega(z)|_{\mathrm{mf}}$.
    \item We have $\lim_{x\to z, x\in\Om} \frac{\nabla u_\Omega(x)}{|\nabla u_\Omega(x)|} =\normal(z)$.
    \item $\lim_{\delta\to 0} \frac 1 \delta u_\Omega(z + \delta \normal(z))=|\nabla u_\Omega(z)|_{\mathrm{mf}}$.
\end{enumerate}\medskip
\item $\sup _{y\in \prt_r \Om} |\nabla u_\Omega(y)|_{\mathrm{mf}}/|\Om|^{1/2} =\sup_{x\in\Om} |\nabla u_\Omega(x)|/|\Om|^{1/2}=J(\Om) $.\medskip
\item There exists  $z\in \prt \Om$ such that for every $n>0$ there exists
a neighborhood $U_n\subset \prt \Om$ of $z$ such that the harmonic measure of 
 $\{y\in U_n \cap \prt_r\Om: |\nabla u_\Omega(y)|_{\mathrm{mf}}/|\Om|^{1/2}> J(\Om)-1/n\}$ is strictly positive.
\end{enumerate}

\end{lemma}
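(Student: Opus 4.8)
\emph{Part (a).} I would build $\prt_r\Om$ as a finite intersection of subsets of $\prt\Om$ of full harmonic measure. Since a single boundary point of a planar domain is polar, harmonic measure does not charge it, so the (at most countable) set of corners of the convex curve $\prt\Om$ is harmonic-measure null; discarding it gives (i). For (ii), observe that $\prt_x u_\Om$ and $\prt_y u_\Om$ are \emph{harmonic} in $\Om$ because $\Delta u_\Om\equiv-1$, and they are bounded because $\|\nabla u_\Om\|_\infty<\infty$; by the Fatou theorem for bounded harmonic functions in Lipschitz (here convex) domains, each of them has, at harmonic-measure-a.e. boundary point, a non-tangential limit which coincides with its minimal fine limit (the two agree wherever both exist, since non-tangential approach regions are not minimal thin; see \cite{Doob, BurdzyBook}). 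Intersecting over the two coordinate derivatives and with the set from (i), we obtain a full-measure set on which $\nabla u_\Om$ has a vector boundary limit $L(z)$, attained both non-tangentially and in the minimal fine sense; since $|\cdot|$ is continuous this yields (ii) with $|\nabla u_\Om(z)|_{\mathrm{mf}}=|L(z)|$. For (iii), use that the superlevel sets of $u_\Om$ are convex (Remark \ref{oldres}(ii)) and that $\nabla u_\Om$ vanishes at a single interior point of the convex body $\Om$ (a consequence of convexity of the level sets together with superharmonicity of $u_\Om$; see \cite{Makar-Limanov-1971}); hence $\nabla u_\Om(x)\neq0$ for $x\in\Om$ near $z$, and $-\nabla u_\Om(x)/|\nabla u_\Om(x)|$ is the outer unit normal of the convex body $\overline{\{u_\Om>u_\Om(x)\}}$ at $x$. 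As $x\to z$ we have $u_\Om(x)\to0$, so these convex bodies increase to $\overline\Om$ in the Hausdorff metric, and since $\prt\Om$ has a unique outer normal at the non-corner point $z$, their outer normals at $x$ converge to it, which is (iii). Finally, for (iv): by (iii), the vector $L(z)$ from (ii) satisfies $L(z)=|\nabla u_\Om(z)|_{\mathrm{mf}}\,\normal(z)$; writing $\Om$ locally at $z$ as $\{x_2>f(x_1)\}$ with $f$ convex, $f(0)=0$ and $f'(0)=0$, one sees that the inner normal ray $\delta\mapsto z+\delta\normal(z)$ lies in $\Om$ for small $\delta>0$ and approaches $z$ non-tangentially, so $\nabla u_\Om(z+\delta\normal(z))\to L(z)$ and hence $\nabla u_\Om(z+\delta\normal(z))\cdot\normal(z)\to|\nabla u_\Om(z)|_{\mathrm{mf}}$ as $\delta\to0$. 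Since $u_\Om(z)=0$ and $u_\Om$ is smooth along the segment $(z,z+\delta\normal(z)]\subset\Om$, we have $u_\Om(z+\delta\normal(z))=\int_0^\delta \nabla u_\Om(z+s\normal(z))\cdot\normal(z)\,ds$; dividing by $\delta$ and letting $\delta\to0$ gives (iv).

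\emph{Part (b).} Put $M=\sup_{x\in\Om}|\nabla u_\Om(x)|=\|\nabla u_\Om\|_\infty$, so the second equality in (b) is the definition of $J$. The inequality $\sup_{y\in\prt_r\Om}|\nabla u_\Om(y)|_{\mathrm{mf}}\leq M$ is clear because $|\nabla u_\Om(y)|_{\mathrm{mf}}=|L(y)|$ is a limit of values of $|\nabla u_\Om|\leq M$. For the reverse inequality, $g:=|\nabla u_\Om|^2$ is bounded and strictly subharmonic (Remark \ref{oldres}(i)), so it cannot reach $M^2$ in the interior; on the other hand a bounded subharmonic function is dominated by the Poisson integral of its minimal fine boundary limits (its least harmonic majorant; the potential part in the Riesz decomposition has minimal fine limit $0$ at a.e. boundary point, see \cite{Doob}). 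If $|\nabla u_\Om(y)|_{\mathrm{mf}}^2\leq M^2-\eta$ held at harmonic-measure-a.e. $y\in\prt\Om$ for some $\eta>0$, then $g\leq M^2-\eta$ throughout $\Om$, contradicting $\sup_\Om g=M^2$. Hence the harmonic-measure essential supremum of $|\nabla u_\Om|_{\mathrm{mf}}$ over $\prt_r\Om$, and a fortiori its supremum, equal $M$; dividing by $|\Om|^{1/2}$ proves (b).

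\emph{Part (c).} By the essential-supremum statement just obtained, for every $n$ the set $E_n:=\{y\in\prt_r\Om:|\nabla u_\Om(y)|_{\mathrm{mf}}/|\Om|^{1/2}>J(\Om)-1/n\}$ has strictly positive harmonic measure, and $(E_n)$ is decreasing. Let $S_n\subset\prt\Om$ be the support of the finite Borel measure $A\mapsto\omega(A\cap E_n)$: it is closed and nonempty, the $S_n$ are nested decreasing, and $\omega(E_n\cap S_n)=\omega(E_n)>0$ since a Borel measure on the second-countable space $\prt\Om$ does not charge the complement of its support. By compactness of $\prt\Om$, the intersection $\bigcap_n S_n$ contains a point $z$; for this $z$, every neighborhood $U$ of $z$ in $\prt\Om$ satisfies $\omega(U\cap E_n)>0$ for all $n$, and taking $U_n=\calB(z,1/n)\cap\prt\Om$ gives exactly the assertion of (c).

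\emph{Main obstacle.} The delicate part is (iv) of (a): reconciling three notions of boundary limit — the minimal fine limit from (ii), the non-tangential limit, and the plain limit along the inner normal ray — and checking their compatibility at harmonic-measure-a.e. point. The facts that make this go through are that non-tangential approach regions at a boundary point of a Lipschitz domain are not minimal thin (so non-tangential and minimal fine limits agree wherever both exist), that the coordinate derivatives of $u_\Om$ are honestly harmonic, and that near a $C^1$ boundary point the inner normal ray is a genuine non-tangential approach. Secondary points are the geometric input in (iii) — Hausdorff convergence of the convex superlevel sets and of their outer normals, together with the non-vanishing of $\nabla u_\Om$ near $\prt\Om$ — and, in (b), the standard but non-trivial domination of a bounded subharmonic function by the Poisson integral of its minimal fine boundary values.
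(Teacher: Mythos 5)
Your proof is correct but follows a genuinely different route from the paper in several places, most notably in parts (a)(ii) and (a)(iv). You observe that the partial derivatives $\partial_1 u_\Omega,\partial_2 u_\Omega$ are bounded harmonic functions (since $\Delta u_\Omega$ is constant) and invoke the Fatou theorem for bounded harmonic functions in Lipschitz domains, together with the coincidence of non-tangential and minimal fine limits, to obtain a vector-valued boundary limit $L(z)$ of $\nabla u_\Omega$. Part (iv) then comes out essentially for free, because the inner normal ray at a non-corner point is a non-tangential approach and $L(z)=|\nabla u_\Omega(z)|_{\mathrm{mf}}\,\normal(z)$ by (iii). The paper instead establishes only the minimal fine limit of the scalar $|\nabla u_\Omega|$, via the fact that $|\nabla u_\Omega(W_t)|^2$ is a bounded continuous submartingale, and must then work considerably harder in (iv): it invokes quasi-local flatness of excursion laws in convex domains and a Wiener-test type density estimate to convert minimal thinness of the level sets of $|\nabla u_\Omega|$ into a Cesaro-type limit along the normal ray. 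Your route is shorter and conceptually cleaner, at the price of leaning on the Hunt--Wheeden Fatou theorem for Lipschitz domains; the paper's argument is more self-contained within the probabilistic and excursion-theoretic framework it already develops elsewhere. In part (b) you use the Riesz decomposition (potentials have minimal fine limit zero a.e., so the bounded subharmonic $|\nabla u_\Omega|^2$ is dominated by the Poisson integral of its minimal fine boundary values), while the paper applies optional stopping directly to the same submartingale; these are the analytic and probabilistic faces of the same mechanism and give exactly the same ``essential-sup'' conclusion. In part (c) your nested-supports compactness argument and the paper's finite-cover contradiction are standard and interchangeable. Both proofs are valid.
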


\begin{proof}
\textit{(a) 
(i)} A Brownian motion does not hit a point fixed in advance. There are at most countably many corners on the boundary of a convex domain. The probability of hitting a corner is zero. Hence, the harmonic measure is carried by boundary points without corners. 

\textit{(ii)} The function $x\longmapsto|\nabla u_\Omega(x)|^2$ is subharmonic in $\Om$ by \eqref{subhar}. It follows that if $W_t$ is Brownian motion then $|\nabla u_\Omega(W_t)|^2$ is a continuous submartingale on the interval $[0,\sigma(\Om))$. Since $x\longmapsto|\nabla u_\Omega(x)|^2$ is bounded in $\Omega$, the submartingale has a left limit at the time $\sigma(\Om)$, a.s. This implies that the  minimal fine limit $ \mflim_{x\to z, x\in\Om} |\nabla u_\Omega(z)|^2$ exists for a set $\prt_r\Om$ of $z$'s with the full harmonic measure. It follows that the  minimal fine limit $ \mflim_{x\to z, x\in\Om} |\nabla u_\Omega(z)|$ exists for $z\in\prt_r\Om$.

\textit{(iii)} By the result of \cite{Makar} (translation in \cite{Makar-Limanov-1971}), the superlevel sets of the function $u$ are convex. This easily implies that \textit{(iii)} holds at all non-corner points of $\prt \Om$.

\textit{(iv)} By \cite[Thm. 10.1]{BurdzyBook}, all excursion laws for Brownian motion in a planar convex domain are quasi-locally flat. By \cite[Rem. 10.3 \textit{(iii)}]{BurdzyBook}, these excursion laws are actually locally flat. By \cite[Def. 7.1]{BurdzyBook}, the local properties of these excursion laws are the same as those of excursions in a half-space. 

Suppose that $z\in\prt \Om$ and the excursion law at $z$ is locally flat. Let $L$ be the halfline with the endpoint $z$, stretching in the direction of $\normal(z)$. Suppose that $A$ is a given subset of $L$  and let $A_k=\{y\in A: 2^{-k-1}<|z-y|\leq 2^{-k} \}$ for $k\in \N$. Suppose that $\limsup _{k\to\infty}2^k|A_k| >0 $. Then, standard arguments similar to the Wiener test show that $A$ is not minimal thin at $0$.

For any $\eps>0$,
we let $A(\eps)=\{y\in L :\big||\nabla u_\Omega(y)|-|\nabla u_\Omega(z)|_{\mathrm{mf}}\big|>\eps \}$. By part \textit{(ii)}, the set $A(\eps)$ is minimal thin at $z$ for every $\eps>0$. Hence, if $A_k(\eps)=\{y\in A(\eps): 2^{-k-1}<|z-y|\leq 2^{-k} \}$, then, $\limsup _{k\to\infty}2^k|A_k(\eps)| =0 $.
This and the fact that $|\nabla u_\Omega|$ is bounded imply that 
\begin{align*}
    \lim_{\delta\to 0} \frac 1 \delta \int_{y\in L, |z-y|< \delta} |\nabla u_\Omega(y)| dy = |\nabla u_\Omega(z)|_{\mathrm{mf}}.
\end{align*}

In view of \textit{(iii)}, we conclude that \textit{(iv)} holds.

\textit{(b)}  Since $|\nabla u_\Omega(W_t)|^2$ is a continuous submartingale on  $[0,\sigma(\Om))$, by the optional stopping theorem,  for every $x\in \Om$,
\begin{align*}
   \E_x \left(\lim_{t\uparrow \sigma(\Om)}|\nabla u_\Omega(W_t)|^2 \right)\geq  |\nabla u_\Omega(x)|^2,
\end{align*}
and, therefore,
\begin{align*}
   \P_x \left(\lim_{t\uparrow \sigma(\Om)}|\nabla u_\Omega(W_t)| \geq  |\nabla u_\Omega(x)|\right)>0.
\end{align*}

This implies that the harmonic measure of $\{z\in \prt\Om:\ |\nabla u_\Omega(z)|_{\mathrm{mf}}\geq|\nabla u_\Omega(x)|\}$ is strictly positive.

It follows that $\sup _{y\in \prt_r \Om} |\nabla u_\Omega(y)|_{\mathrm{mf}} \geq \sup_{x\in\Om} |\nabla u_\Omega(x)| $. The opposite inequality follows from the continuity of $|\nabla u_\Omega(W_t)|$.

\textit{(c)} If the claim in part \textit{(c)} is false, then, for every $z\in \prt \Om$, there is a neighborhood $U(z)\subset \prt\Om$ of $z$ and $\eps>0 $, such that 
$|\{y\in U(z): |\nabla u_\Omega(y)|_{\mathrm{mf}}/|\Om|^{1/2}> J(\Om)-\eps\}|=0$. By compactness, we can cover $\prt\Om$ by a finite number of such neighborhoods. Therefore, for some $\eps_1>0$,
$|\{y\in \prt\Om: |\nabla u_\Omega(y)|_{\mathrm{mf}}/|\Om|^{1/2}> J(\Om)-\eps_1\}|=0$. This contradicts the observation made in part (b) of the proof.

\end{proof}

\begin{remark}\label{rem:supbound}
 Lemma \ref{lem:supbound} supplies a proof of an apparent gap in \cite{hoskins2021towards}. The authors of that paper proved an upper bound for $u_\Omega(z + \eps \normal(z))$ for small $\eps>0$ (see, for example, page 7834 of \cite{hoskins2021towards}). Our Lemma \ref{lem:supbound}, especially part (a) \textit{(iv)}, shows how to rigorously translate their bound into a statement about the gradient of the torsion function.

We note that even an important book \cite{Sperb-1981} takes a rather cavalier attitude towards the meaning of the gradient of the torsion function on the boundary.
On page 85 of the book, the author considers a domain $D$ that is connected, finite (this presumably means bounded) and planar. In principle, $D$ could be a fractal domain, such as a snowflake. Then, on the same page, we find the formula $\max_D |\nabla u_D|=\max_{\prt D} |\nabla u_D| $
without any discussion of what $|\nabla u_D(x)|$ means for a boundary point $x$.

\end{remark}

\section{Existence of an optimizer}\label{sec:existopt}

\begin{proof}[Proof of Theorem \ref{main:thm} \textit{(i)}]
\emph{Step 1.}
Let $J_{\max}$ be the supremum of $J(\Om)$ over all planar convex bodies $\Om$.

Suppose that $\Om_n$ are such that $|\Om_n|=1 $ and $J(\Om_n) > J_{\max} - 1/n$. 
Let $\Omega_n' = \{x\,:\, u_{\Omega_n}(x) > 1/n\}$.
Note that $u_{\Omega_n}-1/n$ is the unique solution of the torsion problem in the set $\Omega_n'$, i.e., $u_{\Omega'_n}=u_{\Omega_n}-1/n$. Therefore, we have
$$ J(\Omega_n') = \frac{ J(\Omega_n) -1/n}{\sqrt{|\Om_n'|}}> J_{\max} -2/n.$$

The domains $\Om'_n$ are convex by Remark \ref{oldres} \textit{(ii)}. 
Thus,
domains $\Om''_n :=\frac{1}{\sqrt{|\Om_n'|}}\Om'_n$ are convex, have areas equal to 1, have smooth (analytic) boundaries and $\lim_{n\to\infty} J(\Om''_n) =J_{\max}$. 
Because of the definition of $\Om''_n $, the gradient $\nabla u_{\Om''_n}$ is an analytic function on the closure of $\Omega''_n$.

To simplify notation, we can and will assume without loss of generality that $\Om_n$ is a sequence of convex sets with analytic boundaries such that the gradient $\nabla u_{\Om_n}$ is an analytic function on the closure of $\Omega_n$,
$|\Om_n|=1 $ and $J(\Om_n) > J_{\max} - 1/n$, for every $n$. We will also assume that for every $n$, $\Omega_n \subset \{(x,y)\,:\, y\ge 0\}$ and that $|\nabla u_{\Om_n}|$  attains its maximum at $0\in \partial \Omega_n$ (it may also attain the maximum at other points).
Therefore, 
\begin{align}\label{eq:bound}
   |\nabla u_{\Om_n}(0)| = \lim_{\delta\to 0} \frac 1 \delta u_{\Om_n}(0 + \delta \boldsymbol{e}_2) > J_{\max} - 1/n.
\end{align}

We will argue that $\Omega_n$'s have uniformly bounded diameters. Assume otherwise. Then, without loss of generality, we may assume that  $d(\Omega_n)\to +\infty$, and, therefore,
\begin{equation}\label{ineq:04_12}
    \|\nabla u_{\Omega_n}  \|_{\infty}\leq r(\Omega_n)\leq \frac{2|\Omega_n|}{P(\Omega_n)}\leq \frac{|\Omega_n|}{d(\Omega_n)}\underset{n\to +\infty}{\longrightarrow} 0,
\end{equation}
which contradicts the fact that $J(\Om_n) > J_{\max} - 1/n$, for every $n$. Here, the first inequality is a consequence of arguments from the book of Sperb \cite{Sperb-1981}, see the discussion in \cite[Section 2.1]{hoskins2021towards}. The second inequality is classical and can be found in the book of Bonnesen and Fenchel \cite{bonnesen}. In the last estimate, we just used the fact that the perimeter of a planar convex set is larger than twice its diameter. 

 Therefore, by Blaschke selection theorem (see Remark \ref{oldres} \textit{(iii)}), there exists a convex body $\Omega$ such that $\Om_n \underset{n\rightarrow +\infty}{\longrightarrow} \Om$ in the Hausdorff topology. We also note that $|\Omega|=1$ because of the continuity of area with respect to the Hausdorff distance for convex domains.

We would like to prove that the set $\Omega$ is a maximizer of the supremum of the gradient of the torsion function among planar convex sets with unit measure i.e., we want to show that 
$$J(\Omega)=\|\nabla u_\Omega\|_{\infty} = \lim_{n\rightarrow+\infty} |\nabla u_{\Omega_n}(0)| =\lim_{n\rightarrow+\infty} \|\nabla u_{\Omega_n}\|_{\infty} = J_{\max}. $$

\emph{Step 2.}
In this step, we prove that the limit set $\Om$ does not have a corner at 0. 

Let $C_\alpha=\{re^{i\theta}: r>0, 0<\theta< \alpha\}$.
For $\eta>0$ and $\alpha\in(0,\pi)$, let 
$B_{\alpha,\eta}$ be the disc inscribed in $C_\alpha$ such that  $(\eta,0)\in \prt B_{\alpha,\eta} \cap \prt C_\alpha $. Let
\begin{align*}
    \wt C_{\alpha,\eta} =\{re^{i\theta}: r\in(0,\eta), 0<\theta< \alpha\} \cup B_{\alpha,\eta}.
\end{align*}
In other words, $\wt C_{\alpha,\eta}$ is the convex hull of $B_{\alpha,\eta}$ and the origin $0$.

Suppose that the limit set $\Om$ has a corner at $0$. Without loss of generality, assume that
$\Om\subset C_\alpha$ for some $\alpha\in(\pi/2,\pi)$,
and the corners of $\Om$ and  $C_\alpha$ are at $0$.
Even if the angle at the corner of $\Om$ is smaller than $\pi/2$, we 
can take $\alpha\in(\pi/2,\pi)$.
We let $\eta>0 $ be twice the diameter of $\Om$. 

The function $h_1:(x_1,x_2)\longmapsto x_2$ is positive harmonic in the upper half-plane
with zero boundary values. Let $h_2:z =re^{i\theta}\in C_\alpha \longmapsto h_1(z^{\pi/\alpha}) = r^{\pi/\alpha} \sin\left(\frac{\pi \theta}{\alpha}\right)$.
The function $h_2$ is positive harmonic in $C_\alpha$ with zero boundary values.

The set $\wt C_{\alpha,\eta}$ is a Lipschitz domain with the Lipschitz constant $\lambda< 1$
because $\alpha\in(\pi/2,\pi)$. Let $\wt u_{\alpha,\eta}$ be the torsion function in $\wt C_{\alpha,\eta}$.
By Proposition \ref{Lem:hvsu} \textit{(ii)} and the boundary Harnack principle (Lemma \ref{lem:BHPgeneral}), there exists a constant $c_1>0$ depending only on $\eta$ and $\alpha$ such that for all $x\in \wt C_{\alpha,\eta}$ satisfying $|x| < \eta/2$, one has
\begin{align}\label{ag20.1}
 \wt u_{\alpha,\eta}(x) \leq c_1  h_2(x) \leq c_1 |x|^{\pi/\alpha} .
\end{align}

Since the sequence $(\Om_n)$ converges to $\Om$ in the Hausdorff topology, there exist vectors $v_n\in\R\times(0,\infty)$
such that $v_n\underset{n\to +\infty}{\longrightarrow} 0$ and  $\Om'_n := \{v_n+x: x\in \Omega_n\} \subset \wt C_{\alpha,\eta}$.
Since $|\nabla u_{\Om_n}|$ attains its maximum at $0$, the maximum
of $|\nabla u_{\Om'_n}|$ is attained at $v_n$ and has the same value. We recall that this value was assumed to be larger than $J_{\max} -1/n$ in step 1. 

Let $w_n $ be a point in the interior of $ \Om'_n$ such that 
$|w_n| \leq 2 |v_n|$ and
$|\nabla u_{\Om'_n}(w_n)|\geq J_{\max} -2/n$. Let $D_n := \{x\in \Om'_n: |x-w_n|\leq |v_n|\}$,  see Figures \ref{fig:shift_v_n} and \ref{fig:shift_v_n_zoom}.

We obtain from \eqref{ag20.1} that for large $n$ and $x\in\prt D_n$,
\begin{align}\label{ag20.5}
     \wt u_{\alpha,\eta}(x)  \leq c_1 |x|^{\pi/\alpha}\leq c_1 |3 v_n|^{\pi/\alpha}= c_2 |v_n|^{\pi/\alpha} .
\end{align}

\begin{figure}[h]
    \centering

\tikzset{every picture/.style={line width=0.75pt}} 

\begin{tikzpicture}[x=0.75pt,y=0.75pt,yscale=-.6,xscale=.6]

\draw [color={rgb, 255:red, 74; green, 144; blue, 226 }  ,draw opacity=1 ][line width=2.25]    (77.33,131) -- (291.33,311) ;
\draw [color={rgb, 255:red, 74; green, 144; blue, 226 }  ,draw opacity=1 ][line width=2.25]    (592.33,309) -- (291.33,311) ;
\draw  [color={rgb, 255:red, 208; green, 2; blue, 27 }  ,draw opacity=1 ][dash pattern={on 6.75pt off 4.5pt}][line width=2.25]  (199.33,105) .. controls (230.33,81) and (299.33,78) .. (375.33,98) .. controls (451.33,118) and (554.33,227) .. (555.33,268) .. controls (556.33,309) and (290.33,311) .. (291.33,311) .. controls (292.33,311) and (206.33,240) .. (182.33,216) .. controls (158.33,192) and (168.33,129) .. (199.33,105) -- cycle ;
\draw  [line width=2.25]  (277.33,81) .. controls (384.33,64) and (461.33,117) .. (474.33,129) .. controls (487.33,141) and (544.33,183) .. (564.33,241) .. controls (584.33,299) and (339.33,285) .. (289.33,285) .. controls (239.33,285) and (163.33,200) .. (167.33,176) .. controls (171.33,152) and (170.33,98) .. (277.33,81) -- cycle ;
\draw [color={rgb, 255:red, 208; green, 2; blue, 27 }  ,draw opacity=1 ]   (168.33,77) -- (190.88,98.29) ;
\draw [shift={(192.33,99.67)}, rotate = 223.36] [color={rgb, 255:red, 208; green, 2; blue, 27 }  ,draw opacity=1 ][line width=0.75]    (10.93,-3.29) .. controls (6.95,-1.4) and (3.31,-0.3) .. (0,0) .. controls (3.31,0.3) and (6.95,1.4) .. (10.93,3.29)   ;
\draw  [line width=5.25] [line join = round][line cap = round] (291.33,311.67) .. controls (291.33,311.67) and (291.33,311.67) .. (291.33,311.67) ;
\draw  [line width=5.25] [line join = round][line cap = round] (291.33,285.67) .. controls (291.33,285.67) and (291.33,285.67) .. (291.33,285.67) ;
\draw  [line width=5.25] [line join = round][line cap = round] (306.33,271) .. controls (306.33,271) and (306.33,271) .. (306.33,271) ;
\draw [color={rgb, 255:red, 74; green, 144; blue, 226 }  ,draw opacity=1 ]   (141.33,231) -- (157.88,215.37) ;
\draw [shift={(159.33,214)}, rotate = 136.64] [color={rgb, 255:red, 74; green, 144; blue, 226 }  ,draw opacity=1 ][line width=0.75]    (10.93,-3.29) .. controls (6.95,-1.4) and (3.31,-0.3) .. (0,0) .. controls (3.31,0.3) and (6.95,1.4) .. (10.93,3.29)   ;
\draw [color={rgb, 255:red, 189; green, 16; blue, 224 }  ,draw opacity=1 ][line width=0.75]    (291.33,311) -- (291.33,291.67) ;
\draw [shift={(291.33,289.67)}, rotate = 90] [color={rgb, 255:red, 189; green, 16; blue, 224 }  ,draw opacity=1 ][line width=0.75]    (10.93,-3.29) .. controls (6.95,-1.4) and (3.31,-0.3) .. (0,0) .. controls (3.31,0.3) and (6.95,1.4) .. (10.93,3.29)   ;
\draw    (480.33,95) -- (465.66,111.51) ;
\draw [shift={(464.33,113)}, rotate = 311.63] [color={rgb, 255:red, 0; green, 0; blue, 0 }  ][line width=0.75]    (10.93,-3.29) .. controls (6.95,-1.4) and (3.31,-0.3) .. (0,0) .. controls (3.31,0.3) and (6.95,1.4) .. (10.93,3.29)   ;

\draw (275,316.4) node [anchor=north west][inner sep=0.75pt]  [font=\large]  {$0$};
\draw (265.33,257) node [anchor=north west][inner sep=0.75pt]    {$v_{n}$};
\filldraw[black] (290,285) circle (3pt);
\filldraw[black] (310,272) circle (3pt);
\filldraw[black] (292,310) circle (3pt);
\draw (488,67) node [anchor=north west][inner sep=0.75pt]  [font=\large]  {$\Omega'_{n}$};
\draw (140,50) node [anchor=north west][inner sep=0.75pt]  [font=\large]  {$\textcolor[rgb]{0.82,0.01,0.11}{\Omega }$};
\draw (312,250) node [anchor=north west][inner sep=0.75pt]    {$w_{n}$};
\draw (115,231.4) node [anchor=north west][inner sep=0.75pt]  [font=\large,color={rgb, 255:red, 74; green, 144; blue, 226 }  ,opacity=1 ]  {$C_{\alpha }$};

\end{tikzpicture}

    \caption{The domain $\Omega$ (red dashed line) and the shifted domain $\Omega'_n$ (black).}
    \label{fig:shift_v_n}
\end{figure}
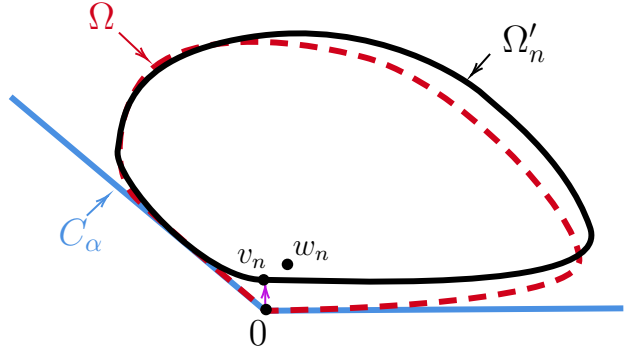

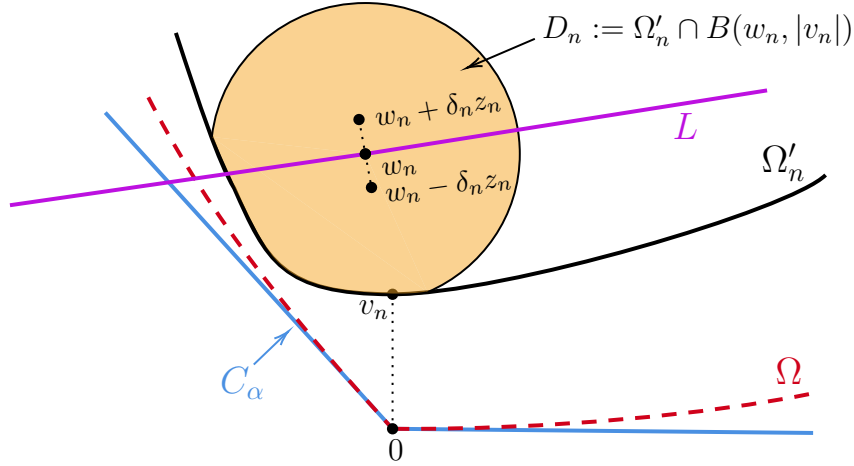
\begin{figure}[h]
    \centering

\tikzset{every picture/.style={line width=0.75pt}} 

\begin{tikzpicture}[x=0.75pt,y=0.75pt,yscale=-.7,xscale=.7]

\draw [color={rgb, 255:red, 74; green, 144; blue, 226 }  ,draw opacity=1 ][line width=1.5]    (74.33,90) -- (281.33,318) ;
\draw [color={rgb, 255:red, 74; green, 144; blue, 226 }  ,draw opacity=1 ][line width=1.5]    (281.33,318) -- (584.33,321) ;
\draw [color={rgb, 255:red, 208; green, 2; blue, 27 }  ,draw opacity=1 ][line width=1.5]  [dash pattern={on 5.63pt off 4.5pt}]  (105.33,79) .. controls (140.33,147) and (189.33,218) .. (281.33,318) ;
\draw [color={rgb, 255:red, 208; green, 2; blue, 27 }  ,draw opacity=1 ][line width=1.5]  [dash pattern={on 5.63pt off 4.5pt}]  (581.33,293) .. controls (512.33,307) and (393.33,319) .. (281.33,318) ;
\draw [line width=1.5]    (125.21,32.55) .. controls (142.21,84.55) and (156.21,126.55) .. (167.21,144.55) .. controls (190.21,196.55) and (201.33,221) .. (281.33,221) .. controls (361.33,221) and (568.33,160) .. (593.33,135) ;
\draw  [draw opacity=0][fill={rgb, 255:red, 245; green, 166; blue, 35 }  ,fill opacity=0.5 ] (151.12,107.55) .. controls (157.37,53.22) and (204.5,11) .. (261.71,11) .. controls (323.17,11) and (373,59.73) .. (373,119.83) .. controls (373,164.36) and (345.65,202.65) .. (306.48,219.5) -- (261.71,119.83) -- cycle ; \draw   (151.12,107.55) .. controls (157.37,53.22) and (204.5,11) .. (261.71,11) .. controls (323.17,11) and (373,59.73) .. (373,119.83) .. controls (373,164.36) and (345.65,202.65) .. (306.48,219.5) ;  
\draw  [dash pattern={on 0.84pt off 2.51pt}]  (281.33,221) -- (281.33,318) ;
\draw [shift={(281.33,318)}, rotate = 90] [color={rgb, 255:red, 0; green, 0; blue, 0 }  ][fill={rgb, 255:red, 0; green, 0; blue, 0 }  ][line width=0.75]      (0, 0) circle [x radius= 3.35, y radius= 3.35]   ;
\draw [shift={(281.33,221)}, rotate = 90] [color={rgb, 255:red, 0; green, 0; blue, 0 }  ][fill={rgb, 255:red, 0; green, 0; blue, 0 }  ][line width=0.75]      (0, 0) circle [x radius= 3.35, y radius= 3.35]   ;
\draw [fill={rgb, 255:red, 245; green, 166; blue, 35 }  ,fill opacity=0.5 ]   (151.12,107.55) .. controls (186.21,189.55) and (194.53,195.12) .. (202.53,202.12) .. controls (210.53,209.12) and (224.53,215.12) .. (243.53,218.12) .. controls (262.53,221.12) and (294.13,222.58) .. (306.48,219.5) ;
\draw  [color={rgb, 255:red, 245; green, 166; blue, 35 }  ,draw opacity=0 ][fill={rgb, 255:red, 245; green, 166; blue, 35 }  ,fill opacity=0.5 ] (306.48,219.5) -- (151.12,107.55) -- (261.71,119.83) -- cycle ;
\draw    (383.84,36.8) -- (336.68,57.01) ;
\draw [shift={(334.84,57.8)}, rotate = 336.8] [color={rgb, 255:red, 0; green, 0; blue, 0 }  ][line width=0.75]    (10.93,-3.29) .. controls (6.95,-1.4) and (3.31,-0.3) .. (0,0) .. controls (3.31,0.3) and (6.95,1.4) .. (10.93,3.29)   ;
\draw [color={rgb, 255:red, 74; green, 144; blue, 226 }  ,draw opacity=1 ]   (181.33,271.67) -- (204.89,249.05) ;
\draw [shift={(206.33,247.67)}, rotate = 136.17] [color={rgb, 255:red, 74; green, 144; blue, 226 }  ,draw opacity=1 ][line width=0.75]    (10.93,-3.29) .. controls (6.95,-1.4) and (3.31,-0.3) .. (0,0) .. controls (3.31,0.3) and (6.95,1.4) .. (10.93,3.29)   ;
\draw [color={rgb, 255:red, 189; green, 16; blue, 224 }  ,draw opacity=1 ][line width=1.5]    (5.66,156.8) -- (261.71,119.83) -- (551.15,74) ;
\draw  [dash pattern={on 0.84pt off 2.51pt}]  (257.15,95) -- (261.71,119.83) ;
\draw [shift={(261.71,119.83)}, rotate = 79.59] [color={rgb, 255:red, 0; green, 0; blue, 0 }  ][fill={rgb, 255:red, 0; green, 0; blue, 0 }  ][line width=0.75]      (0, 0) circle [x radius= 3.35, y radius= 3.35]   ;
\draw [shift={(257.15,95)}, rotate = 79.59] [color={rgb, 255:red, 0; green, 0; blue, 0 }  ][fill={rgb, 255:red, 0; green, 0; blue, 0 }  ][line width=0.75]      (0, 0) circle [x radius= 3.35, y radius= 3.35]   ;
\draw  [dash pattern={on 0.84pt off 2.51pt}]  (261.71,119.83) -- (266.15,144) ;
\draw [shift={(266.15,144)}, rotate = 79.59] [color={rgb, 255:red, 0; green, 0; blue, 0 }  ][fill={rgb, 255:red, 0; green, 0; blue, 0 }  ][line width=0.75]      (0, 0) circle [x radius= 3.35, y radius= 3.35]   ;
\draw [shift={(261.71,119.83)}, rotate = 79.59] [color={rgb, 255:red, 0; green, 0; blue, 0 }  ][fill={rgb, 255:red, 0; green, 0; blue, 0 }  ][line width=0.75]      (0, 0) circle [x radius= 3.35, y radius= 3.35]   ;

\draw (255,223.4) node [anchor=north west][inner sep=0.75pt]    {$v_{n}$};
\draw (270.3,125) node [anchor=north west][inner sep=0.75pt]  [font=\small,rotate=-349.42]  {$w_{n}$};
\draw (276,323.4) node [anchor=north west][inner sep=0.75pt]    {$0$};
\draw (545,110) node [anchor=north west][inner sep=0.75pt]  [font=\large]  {$\Omega '_{n}$};
\draw (555,265) node [anchor=north west][inner sep=0.75pt]  [font=\large,color={rgb, 255:red, 208; green, 2; blue, 27 }  ,opacity=1 ]  {$\Omega $};
\draw (387,17) node [anchor=north west][inner sep=0.75pt]  [font=\normalsize]  {$D_{n} :=\Omega '_{n} \cap B( w_{n} ,|v_{n} |)$};
\draw (155,271.4) node [anchor=north west][inner sep=0.75pt]  [font=\large,color={rgb, 255:red, 74; green, 144; blue, 226 }  ,opacity=1 ]  {$C_{\alpha }$};
\draw (481,88.4) node [anchor=north west][inner sep=0.75pt]  [font=\large]  {$\textcolor[rgb]{0.74,0.06,0.88}{L}$};
\draw (264.77,84.02) node [anchor=north west][inner sep=0.75pt]  [font=\small,rotate=-350.81,xslant=-0.03]  {$w_{n} +\delta _{n} z_{n}$};
\draw (272.59,138.1) node [anchor=north west][inner sep=0.75pt]  [font=\small,rotate=-352.33]  {$w_{n} -\delta _{n} z_{n}$};

\end{tikzpicture}

    \caption{A detailed version of Fig. \ref{fig:shift_v_n}.}
    \label{fig:shift_v_n_zoom}
\end{figure}

Let $\displaystyle z_n=\frac{\nabla u_{\Omega'_n}}{|\nabla u_{\Omega'_n}|}(w_n)$ and $\delta_n\in(0,d(w_n,\partial D_n))$ be sufficiently small so that
\begin{align}\label{ag20.9}
  \frac{1}{2\delta_n}  |u_{\Om'_n}(w_n + \delta_n z_n)- u_{\Om'_n}(w_n - \delta_n z_n)|\geq J_{\max} -3/n.
\end{align}

We have
\begin{align}\label{ag20.2}
  |u_{\Om'_n}(w_n + \delta_n z_n)- u_{\Om'_n}(w_n - \delta_n z_n)|
  =| \E_{w_n + \delta_n z_n} (\sigma(\Om'_n)) - \E_{w_n - \delta_n z_n} (\sigma(\Om'_n))|.
\end{align}

Let $L$ be the line of symmetry for $w_n + \delta_n z_n$ and $w_n - \delta_n z_n$.
We can assume that the expectations in \eqref{ag20.2} correspond
to mirror-coupled Brownian motions with respect to $L$. In other words,
the two Brownian motions are mirror images of each other with respect to $L$
until they hit $L$ or exit $\Om'_n$. If the processes hit $L$ before
exiting $\Om'_n$, their contributions to the difference
$\E_{w_n + \delta_n z_n} (\sigma(\Om'_n)) - \E_{w_n - \delta_n z_n} (\sigma(\Om'_n))$
cancel each other. Thus
\begin{align}\label{ag20.3}
  | \E_{w_n + \delta_n z_n} &(\sigma(\Om'_n)) - \E_{w_n - \delta_n z_n} (\sigma(\Om'_n))|\\
  &= \left| \E_{w_n + \delta_n z_n} \left( \sigma(\Om'_n) 1_{\sigma(\Om'_n) < \tau(L)}\right)
  - \E_{w_n - \delta_n z_n} \left( \sigma(\Om'_n) 1_{\sigma(\Om'_n) < \tau(L)}\right)\right| \notag \\
  &\leq  \E_{w_n + \delta_n z_n} \left( \sigma(\Om'_n) 1_{\sigma(\Om'_n) < \tau(L)}\right)
  + \E_{w_n - \delta_n z_n} \left( \sigma(\Om'_n) 1_{\sigma(\Om'_n) < \tau(L)}\right) \notag \\
  &\leq  \E_{w_n + \delta_n z_n} \left( \tau(\prt \Om'_n \cup L)\right)
  + \E_{w_n - \delta_n z_n} \left( \tau(\prt \Om'_n \cup L)\right). \notag 
\end{align}

Let $W_t$ denote a Brownian motion and $B=\calB( w_{n} ,|v_{n} |)$.
Note that if the Brownian motion starts in $D_n$, then, 
$\{\tau(\prt \Om'_n \cup L \cup \prt B)=\tau( \prt B \cap  \Om'_n)\}
\subset \{\tau( L \cup \prt B)=\tau( \prt B)\}$.

By the strong Markov property applied at the hitting time of $\prt \Om'_n \cup L \cup\prt B$,
\begin{align}\label{ag20.8}
    \E_{w_n + \delta_n z_n} &\left( \tau(\prt \Om'_n \cup L)\right)\\
    &= \E_{w_n + \delta_n z_n} \left( \tau(\prt \Om'_n \cup L \cup \prt B)\right) \notag \\
    &\quad + \E_{w_n + \delta_n z_n} \left(
    \E_{W\left( \tau(\prt \Om'_n \cup L \cup \prt B)\right)}\left( \tau(\prt \Om'_n \cup L)\right)
    1_{\tau(\prt \Om'_n \cup L \cup \prt B)=\tau( \prt B \cap \Om'_n)}
    \right) \notag \\
    &\leq \E_{w_n + \delta_n z_n} \left( \tau(L\cup \prt B)\right) \notag \\
    &\quad + \P_{w_n + \delta_n z_n} \left(\tau(\prt \Om'_n \cup L \cup \prt B)=\tau( \prt B \cap  \Om'_n)\right)
    \sup_{x\in \prt B \cap  \Om'_n}
    \E_{x}\left( \tau(\prt \Om'_n \cup L )\right) \notag \\
    &\leq \E_{w_n + \delta_n z_n} \left( \tau(L\cup \prt B)\right) \notag \\
    &\quad + \P_{w_n + \delta_n z_n} \left(\tau( L \cup \prt B)=\tau( \prt B)\right)
    \sup_{x\in \prt B \cap \wt C_{\alpha,\eta} }
    \E_{x}\left( \tau(\prt \wt C_{\alpha,\eta} )\right). \notag 
\end{align}

Let $D_*$ be the strip of width $|v_n|$, whose boundary includes $L$, and which contains $w_n+ \delta_n z_n$.
In the following, we apply a standard estimate for one-dimensional Brownian motion in an interval of length $|v_n|$ to two-dimensional Brownian motion in $D_*$. We have
\begin{align}\label{ag20.6}
    & \E_{w_n + \delta_n z_n} \left( \tau(L\cup \prt D_n)\right)
    \leq \E_{w_n + \delta_n z_n} \left( \tau(D_*)\right)
    =  \delta_n (|v_n|-\delta_n) \leq \delta_n |v_n|.
\end{align}

Let $B_1 = \{(x_1,x_2)\in \calB((0,0), 1): x_2>0\}$ and $L_1$ be the horizontal axis. The function $(x_1,x_2)\in\R^2 \longmapsto x_2$ is positive harmonic in $B_1$ and vanishes on $L_1$. The same is true of the function $x\longmapsto \P_x(\tau(\prt B_1)< \tau(L_1))$. By the boundary Harnack principle, for some $c_3$ and all $a \in (0, 1/2)$, $ \P_{(0,a)}(\tau(\prt B_1)< \tau(L_1)) \leq c_3 a$. 
Let $B_2 = \{(x_1,x_2)\in \calB((0,0), a_1): x_2>0\}$ for some $a_1>0$. By scaling, for the same constant $c_3$, and all $a_2 \in (0, a_1/2)$, $ \P_{(0,a_2)}(\tau(\prt B_2)< \tau(L_1)) \leq c_3 a_2/a_1$. The estimate also applies to translations and rotations of $B_2$. Hence,
\begin{align}
    & \P_{w_n + \delta_n z_n} \left(\tau( L \cup \prt B)=\tau( \prt B)\right)
    \leq c_3 \delta_n/|v_n|.\label{ag20.7}
\end{align}

It follows from \eqref{ag20.5} that for large $n$,
\begin{align*}
    \sup_{x\in \prt B \cap \wt C_{\alpha,\eta} }
    \E_{x}\left( \tau(\prt \wt C_{\alpha,\eta} )\right) = \sup_{x\in \prt B \cap \wt C_{\alpha,\eta} } \wt u_{\alpha,\eta}(x)
    \leq c_2 |v_n|^{\pi/\alpha} .
\end{align*}

We combine the last estimate with \eqref{ag20.8}, \eqref{ag20.6} and \eqref{ag20.7} to obtain 
\begin{align*}
    \E_{w_n + \delta_n z_n} &\left( \tau(\prt \wt C_{\alpha,\eta} \cup L)\right)
    \leq  \delta_n |v_n| + c_3 (\delta_n/|v_n|) c_2 |v_n|^{\pi/\alpha}
    =  \delta_n |v_n| + c_4 \delta_n  |v_n|^{\frac{\pi}{\alpha}-1}.
\end{align*}

A similar argument gives an analogous estimate for $\E_{w_n - \delta_n z_n}\left( \tau(\prt \wt C_{\alpha,\eta} \cup L)\right)$ (we may have to adjust the value of $c_4$)
so \eqref{ag20.3} yields
\begin{align*}
  |& \E_{w_n + \delta_n z_n} (\sigma(\Om'_n)) - \E_{w_n - \delta_n z_n} (\sigma(\Om'_n))|\\
  &\leq  \E_{w_n + \delta_n z_n} \left( \tau(\prt \wt C_{\alpha,\eta} \cup L)\right)
  + \E_{w_n - \delta_n z_n} \left( \tau(\prt \wt C_{\alpha,\eta} \cup L)\right)
  \leq 2  \delta_n |v_n| + 2c_4 \delta_n  |v_n|^{\pi/\alpha-1},
\end{align*}
and this, combined with \eqref{ag20.2} results in
\begin{align*}
  \frac{1}{2\delta_n}  |u_{\Om'_n}(w_n + \delta_n z_n)- u_{\Om'_n}(w_n - \delta_n z_n)|
  &\leq \frac{1}{2\delta_n} (2  \delta_n |v_n| + 2c_4 \delta_n  |v_n|^{\pi/\alpha-1})\\
 & = 2   |v_n| + 2c_4   |v_n|^{\pi/\alpha-1}.
\end{align*}

Therefore, in view of \eqref{ag20.9}, we see that for all $n$ large enough, one has 
$$J_{\max}-\frac{3}{n}\leq 2   |v_n| + 2c_4   |v_n|^{\pi/\alpha-1},$$
where the constant $c_4$ does not depend on $n$. This provides a contradiction since $|v_n|\underset{n\to +\infty}{\longrightarrow} 0$. We conclude that the limit set $\Omega$ does not have a corner at the origin. \vspace{5mm}

\emph{Step 3.}
From now on, we will assume that the limiting set $\Om$ does not have a corner at $0$. 

Before we continue the main argument, we establish an inequality.
Suppose that $f\geq 0$ and $g$ are real functions, and $\beta>0$. Let
\begin{align*}
    Q&= \{(x,y): - \beta \leq x < \beta, f(x) < y < g(x)\},\\
   M_-&= \{(x,y)\in \prt Q:  x = - \beta\}  ,\\
    M_+&= \{(x,y)\in \prt Q:  x =  \beta\},\\
    \prt_+ Q &=\{(x,y)\in \prt Q:  y=g(x) \}  .
\end{align*}

Fix $\lambda_1\in(0,1/8)$ and assume that $f(0)=0$, $f$ and $g$ are Lipschitz with the constant $\lambda_1$,
and $\beta/2\leq g(x)\leq 2 \beta$ for all $- \beta \leq x < \beta$.
We can apply the boundary Harnack principle
in the spirit of Lemma \ref{lem:BHPgeneral} \textit{(ii)} to conclude that
there exists $p_0<\infty$ depending only on $\lambda_1$ such that for a Brownian motion $W$
and any $(0,y)\in Q$,
\begin{align}\notag
  & \P_{(0,y)}(W_{\sigma(Q)} \in M_- \cup M_+)
\leq p_0 \,   \P_{(0,y)}
(W_{\sigma(Q)} \in \prt_+ Q ),
\end{align}
hence, 
\begin{align}
  & (1+p_0)\P_{(0,y)}(W_{\sigma(Q)} \in M_- \cup M_+)\notag\\
&\qquad \leq p_0 \left(  \P_{(0,y)}(W_{\sigma(Q)} \in \prt_+ Q )+\P_{(0,y)}(W_{\sigma(Q)} \in M_- \cup M_+)\right),\notag
\end{align}
which implies with $p= p_0/(1+p_0)<1$ that,
\begin{align}
\label{ag16.2}
  & \P_{(0,y)}(W_{\sigma(Q)} \in M_- \cup M_+)
\leq p\,   \P_{(0,y)}(W_{\sigma(Q)} \in \prt_+ Q \cup  M_- \cup M_+). 
\end{align}

We will use the estimate \eqref{ag16.2} in later steps. 

\emph{Step 4}.
Fix an arbitrarily small $\eps\in (0,1/4)$ and let $R_\eps> 16\eps$ be a constant depending only on $\eps$  and $\lambda_1$ fixed in Step 3,
 so large that
\begin{align}\label{ag16.3}
    p^{R_\eps-1} < \eps/2,
\end{align}
where $p$ is as in \eqref{ag16.2}. 

Let $c_\alpha = u_\Omega(0,\alpha)$ for $\alpha>0$ such that $(0,\alpha)\in \Om$.
Let $A$ be the connected component of 
$\{(x,y)\in \Omega: |x|\leq R_\eps \alpha,\  u_\Omega(x,y)\leq c_\alpha\}$
that contains $0$ in its boundary.

By Remark \ref{oldres} \textit{(ii)},
the superlevel sets $\{x\in \Om: u_\Om(x) > c_\alpha\}$ are convex and converge in the Hausdorff metric to $\Om$ as $\alpha\to0$.
This implies that there exists $\alpha_0\in(0,1)$ such that for $\alpha\in(0,\alpha_0)$ there exist functions $f$ and $g_\alpha$ such that $f(0)=0$, $f$ is non-negative, and
\begin{align*}
    A = \{(x,y): |x|\leq R_\eps \alpha, f(x) < y < g_\alpha(x)\}.
\end{align*}

\emph{Step 5}.
We will prove that there exists $\alpha_1\in(0,\alpha_0)$ such that if  $\alpha\in(0,\alpha_1)$, then, for $|x|\leq R_\eps \alpha$,
\begin{align}\label{a23.1}
    f(x) &\leq \eps \alpha,\\
    |g_\alpha(x)-\alpha| &< \eps \alpha.\label{a23.2}
\end{align}

Since $\Om$ does not have a corner at $0$, we can
find $y^+_1>0$ (depending only on $\eps$ and $\Om$) so small that if  $y^+:=(y^+_1, y^+_2) \in \prt \Om$ then  $y^+_2/y^+_1 < \eps/(4R_\eps )$ and if $y^-=(-y^+_1, y^-_2) \in \prt \Om$ 
then $y^-_2/y^+_1 < \eps/(4R_\eps )$. 
Suppose that 
\begin{enumerate}[label=(\alph*)]
    \item $L$ is the graph of a convex function on the interval $[-y^+_1,y^+_1]$ and lies in the closed upper half-plane,
    \item if
$(y^+_1,z^+_2)\in L$ then $z^+_2/y^+_1 < \eps/(2R_\eps )$, and
\item if $(-y^+_1,z^-_2)\in L$ then $z^-_2/y^+_1 < \eps/(2R_\eps )$.
\end{enumerate}

Consider any point $v=(v_1,v_2) \in L$ with $|v_1| \leq y^+_1/2$.
Let $z^*_2 =\max(z^-_2, z^+_2)$. Then, by convexity of $L$, $v_2 \leq z^*_2$, and the right-hand side slope of $L$ at $v$ is bounded above by
\begin{align*}
    \frac{|z^+_2 - v_2|}{| y^+_1 - v_1|}
    \leq \frac{ \eps y^+_1/(2R_\eps )}{y^+_1/2} = \eps/R_\eps .
\end{align*}

By symmetry, the left-hand side slope of $L$ at $v$ is bounded below by $-\eps/R_\eps $.
Since $L$ is the graph of a convex function, it follows that the absolute
value of its slope is bounded by $\eps/R_\eps $ on the set $\{(x,y)\in L: |x|\leq y^+_1/2\}$.

We will apply the above estimate to two curves. First, consider $L_1:=\{(x,y)\in \prt \Om: |x|\leq y^+_1/2\}$.
The conditions (a)-(c) hold for $L_1$ because $\Om$ is a convex domain in the upper
half-plane touching $(0,0)$, and by the definition of $y^+_1$.
Suppose that $\alpha_1\in(0, y^+_1/(2R_\eps ))$, $\alpha\in(0,\alpha_1]$, and $v=(v_1,v_2) \in L_1$ with $|v_1| \leq \alpha R_\eps 
\leq y^+_1/2$.

Then, we obtain the following bound by using the slope bound $\eps/R_\eps $ between the points $v,(0,0)\in L_1$,
\begin{align*}
    v_2 = |v_2-0| \leq (\eps/R_\eps ) |v_1 - 0| = \eps |v_1|/R_\eps  \leq \eps(\alpha R_\eps )/R_\eps  = \eps \alpha.
\end{align*}
This proves \eqref{a23.1}.

In view of the definition of $y^+_1$, continuity of $u$, and the fact that $u$ vanishes on $\prt\Om$,
we can make $\alpha_1>0$ smaller, 
if necessary, so that if $\alpha\in(0,\alpha_1]$, 
$L_2= \{(x,y): |x| \leq y^+_1, y=g_\alpha(x)\}$,
$(-y^+_1, w^-_2)\in L_2$, and
$(y^+_1, w^+_2)\in L_2$,
then $w^-_2 < \eps/(2R_\eps ) $ and $w^+_2 < \eps/(2R_\eps ) $.
These conditions and the convexity of the superlevel sets of $u$ (see Remark \ref{oldres} \textit{(ii)}) imply that
the curve $L_2$ satisfies (a)-(c).
Suppose that $\alpha_1\in(0, y^+_1/(2R_\eps ))$, $\alpha\in(0,\alpha_1]$, and $v=(v_1,v_2) \in L_2$ with $|v_1| \leq \alpha R_\eps \leq y^+_1/2$.
Then we obtain the following bound by using the slope bound $\eps/R_\eps $ between the points $v,(0,\alpha)\in L_2$,
\begin{align*}
     |v_2-\alpha| \leq (\eps/R_\eps ) |v_1 - 0| = \eps |v_1|/R_\eps  \leq \eps(\alpha R_\eps )/R_\eps  = \eps \alpha.
\end{align*}

This proves \eqref{a23.2}.

\emph{Step 6}.
Let $A_n$ be the connected component of $\{(x,y)\in  \Om_n: |x|\leq R_\eps \alpha, u_{\Om_n}(x,y) < c_\alpha\}$ whose boundary contains the origin $0$ and 
\begin{align*}
  \prt_+ A_n &= \{(x,y)\in \prt A_n: u_{\Om_n}(x,y) = c_\alpha\},\\
  \prt_s A_n &=  \{(x,y)\in  \prt A_n: |x|= R_\eps \alpha\}.
\end{align*}

We will argue that there exists $n_0$, depending only on $\eps$ and $\alpha$, such that for $n\geq n_0$, 
\begin{align}\label{a25.1}
   & \{(x,y)\in  \prt\Om_n: |x|\leq R_\eps \alpha  \} \subset \{(x,y): |y| < 2\eps \alpha\},\\
   & \{(x,y) \in \prt_+ A_n: |x|\leq R_\eps \alpha\} \subset  \{(x,y): |y-\alpha| < 2\eps \alpha\}.\label{a25.2}
\end{align}

The inclusion \eqref{a25.1} is a simple consequence of \eqref{a23.1}, the Hausdorff convergence of the sequence $\Omega_n$ to $\Omega$ and the fact that we have assumed that $\Omega$ does not have a corner at $0$.
To prove \eqref{a25.2}, note that $u_{\Omega_n}$ converge to $u_{\Omega}$ pointwise and the superlevel sets $\{u_{\Omega_n}\ge c_\alpha\}$ and $\{u_{\Omega}\ge c_\alpha\}$ are convex.
This implies the Hausdorff convergence of the  sets $\{u_{\Omega_n}\ge c_\alpha\}$ to $\{u_{\Omega}\ge c_\alpha\}$. This observation and \eqref{a23.2} imply  \eqref{a25.2}.

For similar reasons, the function representing $\prt_+ A_n$ is Lipschitz with a constant less than $2\eps/R_\eps$ for large $n$.

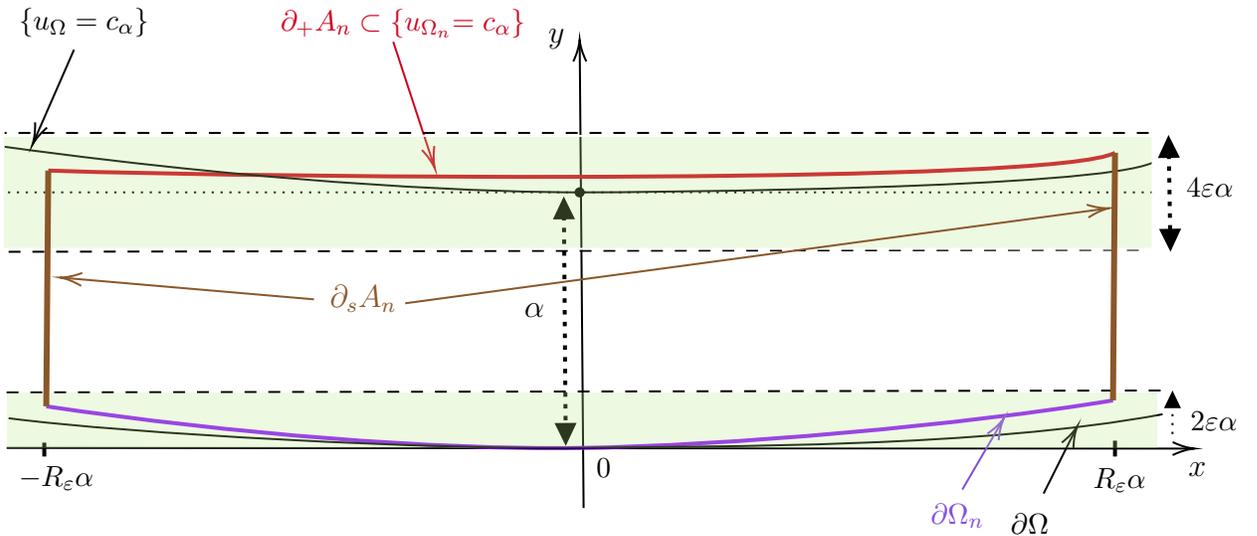
\begin{figure}[h]
\centering
\tikzset{every picture/.style={line width=0.75pt}} 

\begin{tikzpicture}[x=0.75pt,y=0.75pt,yscale=-1,xscale=1]

\draw    (21.33,300) -- (618.33,300.33) ;
\draw [shift={(620.33,300.33)}, rotate = 180.03] [color={rgb, 255:red, 0; green, 0; blue, 0 }  ][line width=0.75]    (10.93,-3.29) .. controls (6.95,-1.4) and (3.31,-0.3) .. (0,0) .. controls (3.31,0.3) and (6.95,1.4) .. (10.93,3.29)   ;
\draw    (310.35,96) -- (312.33,330.33) ;
\draw [shift={(310.33,94)}, rotate = 89.52] [color={rgb, 255:red, 0; green, 0; blue, 0 }  ][line width=0.75]    (10.93,-3.29) .. controls (6.95,-1.4) and (3.31,-0.3) .. (0,0) .. controls (3.31,0.3) and (6.95,1.4) .. (10.93,3.29)   ;
\draw [color={rgb, 255:red, 144; green, 19; blue, 254 }  ,draw opacity=1 ][line width=1.5]    (41.33,279) .. controls (136.33,293) and (236.33,301) .. (310.33,300) .. controls (384.33,299) and (503.83,288.5) .. (579.33,276) ;
\draw [color={rgb, 255:red, 208; green, 2; blue, 27 }  ,draw opacity=1 ][line width=1.5]    (42.33,160) .. controls (162.33,164) and (551.33,167) .. (580.33,151) ;
\draw [color={rgb, 255:red, 208; green, 2; blue, 27 }  ,draw opacity=1 ]   (216.33,95) -- (236.71,157.1) ;
\draw [shift={(237.33,159)}, rotate = 251.83] [color={rgb, 255:red, 208; green, 2; blue, 27 }  ,draw opacity=1 ][line width=0.75]    (10.93,-3.29) .. controls (6.95,-1.4) and (3.31,-0.3) .. (0,0) .. controls (3.31,0.3) and (6.95,1.4) .. (10.93,3.29)   ;
\draw    (544.33,323) -- (560.44,290.79) ;
\draw [shift={(561.33,289)}, rotate = 116.57] [color={rgb, 255:red, 0; green, 0; blue, 0 }  ][line width=0.75]    (10.93,-3.29) .. controls (6.95,-1.4) and (3.31,-0.3) .. (0,0) .. controls (3.31,0.3) and (6.95,1.4) .. (10.93,3.29)   ;
\draw [line width=1.5]    (40.33,304.33) -- (40.33,297.33) ;
\draw [line width=1.5]    (580.33,296.33) -- (580.33,304.33) ;
\draw [line width=1.5]  [dash pattern={on 1.69pt off 2.76pt}]  (302.37,177) -- (303.3,295) ;
\draw [shift={(303.33,299)}, rotate = 269.55] [fill={rgb, 255:red, 0; green, 0; blue, 0 }  ][line width=0.08]  [draw opacity=0] (11.61,-5.58) -- (0,0) -- (11.61,5.58) -- cycle    ;
\draw [shift={(302.33,173)}, rotate = 89.55] [fill={rgb, 255:red, 0; green, 0; blue, 0 }  ][line width=0.08]  [draw opacity=0] (11.61,-5.58) -- (0,0) -- (11.61,5.58) -- cycle    ;
\draw    (20.33,148) .. controls (166.33,168) and (271.33,171) .. (310.33,171) .. controls (349.33,171) and (571.33,169) .. (599.33,156) ;
\draw [fill={rgb, 255:red, 0; green, 0; blue, 0 }  ,fill opacity=1 ] [dash pattern={on 4.5pt off 4.5pt}]  (597.33,141) -- (20.33,141) ;
\draw  [dash pattern={on 4.5pt off 4.5pt}]  (22.33,201) -- (598.33,200) ;
\draw    (22.33,285) .. controls (83.33,293) and (238.33,301) .. (310.33,300) .. controls (382.33,299) and (513.33,300) .. (604.33,283) ;
\draw  [dash pattern={on 4.5pt off 4.5pt}]  (21.33,272) -- (609.33,271) ;
\draw  [fill={rgb, 255:red, 0; green, 0; blue, 0 }  ,fill opacity=1 ] (308.27,170.99) .. controls (308.28,169.85) and (309.21,168.93) .. (310.34,168.94) .. controls (311.48,168.95) and (312.4,169.87) .. (312.39,171.01) .. controls (312.39,172.15) and (311.46,173.07) .. (310.32,173.06) .. controls (309.18,173.05) and (308.27,172.13) .. (308.27,170.99) -- cycle ;
\draw  [dash pattern={on 0.84pt off 2.51pt}]  (599.33,171) -- (19.33,171) ;
\draw [line width=1.5]  [dash pattern={on 1.69pt off 2.76pt}]  (607.4,146) -- (608.27,197) ;
\draw [shift={(608.33,201)}, rotate = 269.03] [fill={rgb, 255:red, 0; green, 0; blue, 0 }  ][line width=0.08]  [draw opacity=0] (11.61,-5.58) -- (0,0) -- (11.61,5.58) -- cycle    ;
\draw [shift={(607.33,142)}, rotate = 89.03] [fill={rgb, 255:red, 0; green, 0; blue, 0 }  ][line width=0.08]  [draw opacity=0] (11.61,-5.58) -- (0,0) -- (11.61,5.58) -- cycle    ;
\draw [line width=0.75]  [dash pattern={on 0.84pt off 2.51pt}]  (609.33,274) -- (609.33,296) ;
\draw [shift={(609.33,271)}, rotate = 90] [fill={rgb, 255:red, 0; green, 0; blue, 0 }  ][line width=0.08]  [draw opacity=0] (8.93,-4.29) -- (0,0) -- (8.93,4.29) -- cycle    ;
\draw [color={rgb, 255:red, 132; green, 74; blue, 226 }  ,draw opacity=1 ]   (503.33,321) -- (523.33,286.73) ;
\draw [shift={(524.33,285)}, rotate = 120.26] [color={rgb, 255:red, 132; green, 74; blue, 226 }  ,draw opacity=1 ][line width=0.75]    (10.93,-3.29) .. controls (6.95,-1.4) and (3.31,-0.3) .. (0,0) .. controls (3.31,0.3) and (6.95,1.4) .. (10.93,3.29)   ;
\draw  [color={rgb, 255:red, 255; green, 255; blue, 255 }  ,draw opacity=1 ][fill={rgb, 255:red, 184; green, 233; blue, 134 }  ,fill opacity=0.24 ] (19.33,142.5) -- (599.33,142.5) -- (599.33,199.5) -- (19.33,199.5) -- cycle ;
\draw  [color={rgb, 255:red, 0; green, 0; blue, 0 }  ,draw opacity=0 ][fill={rgb, 255:red, 184; green, 233; blue, 134 }  ,fill opacity=0.24 ] (21.33,272) -- (601.33,272) -- (601.33,300) -- (21.33,300) -- cycle ;
\draw [color={rgb, 255:red, 139; green, 87; blue, 42 }  ,draw opacity=1 ][line width=2.25]    (42.33,160) -- (41.33,279) ;
\draw [color={rgb, 255:red, 139; green, 87; blue, 42 }  ,draw opacity=1 ][line width=2.25]    (580.33,151) -- (579.33,276) ;
\draw [color={rgb, 255:red, 139; green, 87; blue, 42 }  ,draw opacity=1 ]   (176.33,225) -- (50.33,214.17) ;
\draw [shift={(48.33,214)}, rotate = 4.91] [color={rgb, 255:red, 139; green, 87; blue, 42 }  ,draw opacity=1 ][line width=0.75]    (10.93,-3.29) .. controls (6.95,-1.4) and (3.31,-0.3) .. (0,0) .. controls (3.31,0.3) and (6.95,1.4) .. (10.93,3.29)   ;
\draw [color={rgb, 255:red, 139; green, 87; blue, 42 }  ,draw opacity=1 ]   (222.33,227) -- (575.35,179.27) ;
\draw [shift={(577.33,179)}, rotate = 172.3] [color={rgb, 255:red, 139; green, 87; blue, 42 }  ,draw opacity=1 ][line width=0.75]    (10.93,-3.29) .. controls (6.95,-1.4) and (3.31,-0.3) .. (0,0) .. controls (3.31,0.3) and (6.95,1.4) .. (10.93,3.29)   ;
\draw    (55.33,99) -- (35.13,145.17) ;
\draw [shift={(34.33,147)}, rotate = 293.63] [color={rgb, 255:red, 0; green, 0; blue, 0 }  ][line width=0.75]    (10.93,-3.29) .. controls (6.95,-1.4) and (3.31,-0.3) .. (0,0) .. controls (3.31,0.3) and (6.95,1.4) .. (10.93,3.29)   ;

\draw (317.33,303.73) node [anchor=north west][inner sep=0.75pt]  [font=\small]  {$0$};
\draw (158,76.73) node [anchor=north west][inner sep=0.75pt]  [font=\small,color={rgb, 255:red, 245; green, 166; blue, 35 }  ,opacity=1 ]  {${\textstyle \textcolor[rgb]{0.82,0.01,0.11}{\partial _{+} A_{n} \subset \{}\textcolor[rgb]{0.82,0.01,0.11}{u}\textcolor[rgb]{0.82,0.01,0.11}{_{\Omega _{n}}}\textcolor[rgb]{0.82,0.01,0.11}{=c}\textcolor[rgb]{0.82,0.01,0.11}{_{\alpha }}\textcolor[rgb]{0.82,0.01,0.11}{\}}}$};
\draw (526.33,331.73) node [anchor=north west][inner sep=0.75pt]  [font=\small]  {$\partial \Omega $};
\draw (26,307.73) node [anchor=north west][inner sep=0.75pt]  [font=\small]  {$-R_{\varepsilon } \alpha $};
\draw (568,308.73) node [anchor=north west][inner sep=0.75pt]  [font=\small]  {$R_{\varepsilon } \alpha $};
\draw (616,305.73) node [anchor=north west][inner sep=0.75pt]    {$x$};
\draw (293,86.73) node [anchor=north west][inner sep=0.75pt]    {$y$};
\draw (281,225.73) node [anchor=north west][inner sep=0.75pt]    {$\alpha $};
\draw (615,162.4) node [anchor=north west][inner sep=0.75pt]  [font=\small]  {$4\varepsilon \alpha $};
\draw (617,280.4) node [anchor=north west][inner sep=0.75pt]  [font=\small]  {$2\varepsilon \alpha $};
\draw (26,76.4) node [anchor=north west][inner sep=0.75pt]  [font=\small]  {$\{u_{\Omega } =c_{\alpha }\}$};
\draw (486,326.4) node [anchor=north west][inner sep=0.75pt]  [font=\small,color={rgb, 255:red, 129; green, 74; blue, 226 }  ,opacity=1 ]  {$\partial \Omega _{n}$};
\draw (183,216.4) node [anchor=north west][inner sep=0.75pt]  [color={rgb, 255:red, 139; green, 87; blue, 42 }  ,opacity=1 ]  {$\partial _{s} A_{n}$};
\end{tikzpicture}
    \caption{The domain $A_n$ and relevant notations. }
    \label{fig:domain_A}
\end{figure}

\emph{Step 7}.
Consider a Brownian motion $W_t$ starting from $0 + \delta \mathbf{e}_2$.
Then, 
\begin{align}\label{eq:oc11.2}
    u_{\Om_n}(0 + \delta \mathbf{e}_2) &=\E_{0 + \delta \mathbf{e}_2}\left(\sigma(A_n) \right) 
   +\E_{0 + \delta \mathbf{e}_2}\left(1_{W(\sigma(A_n)) \in \prt \Om_n} \E_{W(\sigma(A_n))}\left(\sigma(\Om_n) \right)\right)\\
&\qquad   +\E_{0 + \delta \mathbf{e}_2}\left(1_{W(\sigma(A_n)) \in \prt_s A_n }\E_{W(\sigma(A_n))}\left(\sigma(\Om_n) \right)\right) \notag \\
&\qquad   +\E_{0 + \delta \mathbf{e}_2}\left( 1_{W(\sigma(A_n))\in \prt_+ A_n} \E_{W(\sigma(A_n))}\left(\sigma(\Om_n) \right)\right) \notag \\
   & \leq \E_{0 + \delta \mathbf{e}_2}\left(\sigma(A_n) \right) 
     +c_\alpha \P_{0 + \delta \mathbf{e}_2}\left( W(\sigma(A_n))\in \prt_s A_n\right) \notag \\
&\qquad   +c_\alpha \P_{0 + \delta \mathbf{e}_2}\left( W(\sigma(A_n))\in \prt_+ A_n\right). \notag 
\end{align}

We will show that we can find $\alpha_2\in(0,\alpha_1)$ such that for $\alpha\in(0,\alpha_2]$,
\begin{align}\label{a24.1}
&\E_{0 + \delta \mathbf{e}_2}\left(\sigma(A_n) \right) 
     +c_\alpha \P_{0 + \delta \mathbf{e}_2}\left( W(\sigma(A_n))\in \prt_s A_n\right)\\
&\qquad   \leq 2 \eps c_\alpha \P_{0 + \delta \mathbf{e}_2}\left( W(\sigma(A_n))\in \prt_+ A_n\right),\notag
\end{align}
and, therefore,
\begin{align}\label{ag15.1}
   & u_{\Om_n}(0 + \delta \mathbf{e}_2)
   \leq (1+2\eps) c_\alpha \P_{0 + \delta \mathbf{e}_2}\left(W(\sigma(A_n))\in \prt_+ A_n\right).
\end{align}

We will prove \eqref{a24.1} in two steps.

\emph{Step 8}.
 First, we will prove that
\begin{align}\label{ag15.2}
&c_\alpha \P_{0 + \delta \mathbf{e}_2}\left( W(\sigma(A_n))\in \prt_s A_n\right)
  \leq (\eps/2) c_\alpha \P_{0 + \delta \mathbf{e}_2}\left( W(\sigma(A_n))\in \prt_+ A_n\right),
\end{align}
i.e.,
\begin{align}\label{ag15.3}
& \P_{0 + \delta \mathbf{e}_2}\left( W(\sigma(A_n))\in \prt_s A_n\right)
  \leq (\eps/2)  \P_{0 + \delta \mathbf{e}_2}\left( W(\sigma(A_n))\in \prt_+ A_n\right).
\end{align}

Let
\begin{align*}
    Q_n^k&= \{(x,y)\in A_n: (k-1) \alpha \leq x < (k+1)\alpha\},\\
    M_n^k&= \{(x,y)\in A_n:  x = k \alpha\}.
\end{align*}

The sets $Q_n^k$ satisfy the assumptions on $Q$ in Step 3 because of \eqref{a25.1}-\eqref{a25.2}
and we have assumed that $\eps<1/4$. Moreover, the function representing $\prt_+ A_n$
is Lipschitz with a Lipschitz constant less than $2\eps/R_\eps< 1/8$ because we assumed that $R_\eps > 16 \eps$.

It follows that we can apply the estimate \eqref{ag16.2} with $\beta = \alpha$ so that for a Brownian motion $W$,
$x\in M^k_n$ and $|k| \leq R_\eps -1$,
\begin{align}\label{ag16.1}
   \P_x(W_{\sigma(Q^k_n)} \in M_n^{k-1} \cup M_n^{k+1})
\leq p \, \P_x(W_{\sigma(Q^k_n)} \in \prt_+ A_n \cup M_n^{k-1} \cup M_n^{k+1}).
\end{align}

Let $T_0=0$, $k_0=0$, and for $m\geq 0$,
\begin{align*}
    T_{m+1} &= \inf\{t\geq T_m: W_t \in \prt Q_n^{k_m}\},\\
    k_{m+1} &=
    \begin{cases}
     j & \text{  if  } W_{T_m}\in M_n^j,\\
        k_m & \text{  if  } W_{T_m} \notin \bigcup_i M^i_n.
    \end{cases}
\end{align*}

If the Brownian motion $W$ starts from $0 + \delta \mathbf{e}_2$, it must hit at least $R_\eps -1$
distinct sets $M_n^i$ if it is to exit the set $A_n $ via $\prt_s A_n$.
Hence, we must have $k_{m+1}\ne k_m$ for at least $R_\eps -1$ indices $m$.
Every time the Brownian motion hits a set $M_n^i$, its chance of hitting
another set $M_n^j$ before exiting $A_n$ through $\prt_+ A_n$ is at most $p$.
By the strong Markov property applied at the stopping times $T_m$
and \eqref{ag16.1}, the probability that $W$ will exit $A_n $ via $\prt_s A_n$
rather than via $\prt_+ A_n$ is at most
$p^{R_\eps -1}$. This is bounded by $\eps/2$ according to \eqref{ag16.3} so \eqref{ag15.3}
is proved and so is \eqref{ag15.2}.

\emph{Step 9}.
We will show that 
  \begin{align}\label{eq:oc11.1}
      \E_{0 + \delta \mathbf{e}_2}\left(\sigma(A_n) \right) 
  \leq  \eps c_\alpha \P_{0 + \delta \mathbf{e}_2}\left( W(\sigma(A_n))\in \prt_+ A_n\right).
  \end{align}

It follows from \eqref{eq:bound} that there exists $\delta_n>0$
such that for $\delta\in(0,\delta_n)$,
\begin{align}\label{a24.3}
    u_{\Om_n}(0 + \delta \mathbf{e}_2) \geq \delta (J_{\max} - 2/n).
\end{align}

Since $\eps < 1/4$,  \eqref{a25.2} implies that $A_n  \subset B:= \{(x,y):0\leq y \leq 2 \alpha\}$.
Therefore, using the formula for the expected lifetime of Brownian motion in an interval, we have
\begin{align}
    \E_{0 + \delta \mathbf{e}_2}\left(\sigma(A_n) \right)
    \leq \E_{0 + \delta \mathbf{e}_2}\left(\sigma(B) \right)
    = \delta (2\alpha-\delta).
\end{align}

This and \eqref{a24.3} imply that there exists $\alpha_3 \in (0,\alpha_2)$ such that for $\alpha< \alpha_3$ and $\delta< \delta_n$,
\begin{align*}
\frac{\E_{0 + \delta \mathbf{e}_2}\left(\sigma(A_n) \right) }{  u_{\Om_n}(0 + \delta \mathbf{e}_2)}
\leq \frac{\delta (2\alpha-\delta)}{\delta (J_{\max} - 2/n)} \leq \eps/2.
\end{align*}

We use this inequality and \eqref{eq:oc11.2} to see that,
\begin{align*}
\mathbb{E}_{0+\delta\mathbf{e}_{2}}\left(\sigma\left(A_{n}\right)\right) & \leq\frac{\eps}{2}u_{\Omega_{n}}\left(0+\delta\mathbf{e}_{2}\right)\\
 & \leq\frac{\eps}{2}\mathbb{E}_{0+\delta\mathbf{e}_{2}}\left(\sigma\left(A_{n}\right)\right)+\frac{\eps}{2}c_{\alpha}\mathbb{P}_{0+\delta\mathbf{e}_{2}}\left(W\left(\sigma\left(A_{n}\right)\right)\in\partial_{s}A_{n}\right)\\
 & +\frac{\eps}{2}c_{\alpha}\mathbb{P}_{0+\delta\mathbf{e}_{2}}\left(W\left(\sigma\left(A_{n}\right)\right)\in\partial_{+}A_{n}\right).
\end{align*}

We subtract $\frac{\eps}{2}\mathbb{E}_{0+\delta\mathbf{e}_{2}}\left(\sigma\left(A_{n}\right)\right)$ from both sides 
and then divide by $\left(1-\frac{\eps}{2}\right)$ to get
\[
\mathbb{E}_{0+\delta\mathbf{e}_{2}}\left(\sigma\left(A_{n}\right)\right)\leq\frac{c_{\alpha}\frac{\eps}{2}}{\left(1-\frac{\eps}{2}\right)}\Big(\mathbb{P}_{0+\delta\mathbf{e}_{2}}\left(W\left(\sigma\left(A_{n}\right)\right)\in\partial_{s}A_{n}\right)+\mathbb{P}_{0+\delta\mathbf{e}_{2}}\left(W\left(\sigma\left(A_{n}\right)\right)\in\partial_{+}A_{n}\right)\Big)
\]
and then use \eqref{ag15.3} to arrive at
\begin{align*}
\mathbb{E}_{0+\delta\mathbf{e}_{2}}\left(\sigma\left(A_{n}\right)\right) & \leq c_{\alpha}\frac{\eps}{2}\sup_{0<\eps<\frac{1}{4}}\left(\frac{1+\frac{\eps}{2}}{1-\frac{\eps}{2}}\right)\mathbb{P}_{0+\delta\mathbf{e}_{2}}\left(W\left(\sigma\left(A_{n}\right)\right)\in\partial_{+}A_{n}\right)\\
 & \leq c_{\alpha}\eps\mathbb{P}_{0+\delta\mathbf{e}_{2}}\left(W\left(\sigma\left(A_{n}\right)\right)\in\partial_{+}A_{n}\right),
\end{align*}
which proves \eqref{eq:oc11.1}.

\emph{Step 10}.
Combining \eqref{eq:oc11.2}, \eqref{ag15.2} and \eqref{eq:oc11.1} yields 
   \begin{align}\label{eq:oc11.4}
     u_{\Om_n}(0 + \delta \mathbf{e}_2)
   \leq (1+2\eps) c_\alpha \P_{0 + \delta \mathbf{e}_2}\left(W(\sigma(A_n))\in \prt_+ A_n\right).
   \end{align}

By \eqref{a25.2}, since $\eps<1/4$,
if $(x,y)\in \prt _+ A_n$ then $y\geq \alpha(1-2\eps)$. Note that since $\eps<1/4$, then $1-2\eps>1/2$.
Let $ B':= \{(x,y):0\leq y \leq  \alpha(1-2\eps)\} $
and $ \prt_+ B':= \{(x,y): y =  \alpha(1-2\eps)\} $. Then
\begin{align}\label{eq:oc11.5}
    \P_{0 + \delta \mathbf{e}_2}\left( W(\sigma(A_n))\in \prt_+ A_n\right)
    \leq \P_{0 + \delta \mathbf{e}_2}\left( W(\sigma(B'))\in \prt_+ B'\right)
    =   \frac{\delta}{\alpha(1-2\eps)}.
\end{align}

Recall from \eqref{a24.3} that for all  $\delta\in (0,\delta_n)$,
$$\frac{u_{\Omega_n}(0 + \delta \mathbf{e}_2)}{\delta} > J_{\max}-\frac{2}{n}.$$
This, \eqref{eq:oc11.4} and \eqref{eq:oc11.5} show that
\begin{align*}
u_\Omega(0,\alpha)&= c_\alpha  \geq 
\frac{  u_{\Om_n}(0 + \delta \mathbf{e}_2)}
{    (1+2\eps)  \P_{0 + \delta \mathbf{e}_2}\left( W(\sigma(A_n))\in \prt_+ A_n\right)}\\
&\geq 
\frac{ \delta (J_{\max} - 2/n)}
{    (1+2\eps)    \delta/(\alpha(1-2\eps))}
=\frac{ \alpha(1-2\eps) (J_{\max} - 2/n)}
{    (1+2\eps)    }  .
\end{align*}
By taking the limit when $n$ tends to $+\infty$, then dividing by $\alpha>0$, we deduce that 
$$\frac{u_\Omega(0,\alpha)}{\alpha} \ge \frac{ 1-2\eps  }
{1+2\eps}\cdot J_{\max}  .$$
By the mean value theorem applied to the function $y\in (0,\alpha)\longmapsto u_\Omega(0,y)$, there exists $y_\alpha \in (0,\alpha)$ such that $\frac{\partial u_\Omega}{\partial y}(0,y_\alpha) = \frac{u_\Omega(0,\alpha)}{\alpha}$. We can then write 
$$|\nabla u_\Omega(0,y_\alpha)|\ge \left|\frac{\partial u_\Omega}{\partial y}(0,y_\alpha)\right| = \frac{u_\Omega(0,\alpha)}{\alpha} \ge \frac{ 1-2\eps  }
{1+2\eps}\cdot J_{\max}.$$
Therefore, we have proved that
$$\forall \eps\in (0,1/4),\ \ \ \ \|\nabla u_\Omega\|_\infty \ge \frac{ 1-2\eps  }
{1+2\eps}\cdot J_{\max}.$$
This shows that  $\|\nabla u_\Omega\|_\infty = J_{\max}$, with $|\Om|=1$, which proves that the limit domain $\Omega$ is actually an optimal set.  
\end{proof}

\section{The  boundary of the optimizer is $C^1$}\label{sec:nocorners}

\begin{proof}[Proof of Theorem \ref{main:thm} \textit{(ii)}]
  Suppose that  $\Om$ is a convex optimizer in the sense of \eqref{defOptimizer}. 
  We assume without loss of generality that $|\Om|=1$.
  We have $J(\Om)>0$. We choose a coordinate system so that $\Om\subset\{(x_1,x_2):x_2\geq 0\}$, $0\in \prt \Om$, and  for every $n>0$ there exists
a neighborhood $U_n\subset \prt \Om$ of $0$ such that the harmonic measure of 
 $\{y\in U_n \cap \prt_r\Om: |\nabla u_\Omega(y)|_{\mathrm{mf}}> J(\Om)-1/n\}$ is strictly positive  (see Lemma \ref{lem:supbound} (c)).

Let $C_\alpha=\{re^{i\theta}: r>0\ \text{and}\ \theta\in(0, \alpha)\}$.
The function $h_1:(x_1,x_2)\longmapsto x_2$ is positive harmonic in the upper half-plane
with zero boundary values. Suppose $\alpha\in(0,\pi)$
and let $h_2:z\longmapsto  h_1(z^{\pi/\alpha}) = r^{\pi/\alpha} \sin\frac{\pi \theta}{\alpha}$ for $z=re^{i\theta}\in C_\alpha$.
The function $h_2$ is positive harmonic in $C_\alpha$
with zero boundary values.

Recall that $\normal_D(z)$ denotes the unit inner normal vector at $z\in\prt D$ in a domain $D$.
Since $\alpha\in(0,\pi)$, for $r>0$,
\begin{align}\label{eq:h2lim}
    \lim_{\delta\to 0} \frac{1}{\delta} h_2(r e^{i \alpha} +\delta\normal_{C_\alpha}(r e^{i \alpha})) = \frac{\pi}{\alpha} r^{\pi/\alpha}.
\end{align}

We have already proved in Section \ref{sec:existopt} that $\prt\Omega$ does not have a corner at the origin. Suppose now that it has a corner at $z_1\neq 0$ so that there is a cone $C$ with vertex $z_1$ and angle $\alpha<\pi$ that contains $\Om$.
We can and will assume that $\alpha$ is the smallest angle with this property.
Let $L_1$ be the axis of symmetry of $C$ and let $L_2 $ be a line orthogonal to $L_1$ 
such that the intersection point of $L_1$ and $L_2$ lies in the interior of $\Om$. See Fig. \ref{fig13}. 

We consider $\alpha':= \max(\alpha,3\pi/4)$. For $\eta>0$, let 
$B_{\alpha',\eta}$ be the disc inscribed in $C_{\alpha'}$ such that  $(\eta,0)\in \prt B_{\alpha,\eta} \cap \prt C_{\alpha'} $. Let
\begin{align*}
    \wt C_{\alpha',\eta} =\{re^{i\theta}: r\in(0,\eta)\ \text{and}\ \theta\in (0,\alpha')\} \cup B_{\alpha',\eta}.
\end{align*}

In other words, $\wt C_{\alpha',\eta}$ is the convex hull of $B_{\alpha',\eta}$ and $0$. We then consider $\eta$ sufficiently large and $\wt C'_{\alpha',\eta}$ the image of $\wt C_{\alpha',\eta}$ via a rigid motion that maps the origin $0$ onto the corner $z_1$ and such that $\Omega\subset \wt C'_{\alpha',\eta}$. We denote by $\wt u_{\alpha',\eta}$ and $\wt u'_{\alpha',\eta}$ the torsion functions of $\wt C_{\alpha',\eta}$ and $\wt C'_{\alpha',\eta}$ respectively. 

By the inclusion $\Omega\subset \wt C'_{\alpha',\eta}$, we have for all $x\in \Omega$,
$$\E_x(\sigma(\Omega)) = u_\Omega(x) \leq \wt u'_{\alpha',\eta}(x).$$

On the other hand, the sets $\wt C_{\alpha',\eta}$ and $\wt C'_{\alpha',\eta}$ are Lipschitz domains with the Lipschitz constant $\lambda< 1$
because $\alpha'\in(\pi/2,\pi)$. Let $\wt u_{\alpha,\eta}$ be the torsion function in $\wt C_{\alpha,\eta}$.
By Proposition \ref{Lem:hvsu} \textit{(ii)} and the boundary Harnack principle (Lemma \ref{lem:BHPgeneral}), for some $c_1>0$ depending on $\eta$ and $\alpha'$, and all $x\in \wt C_{\alpha',\eta}$ such that for $|x-z_1| < \eta/2$, 
$$\wt u_{\alpha',\eta}(x) \leq c_1 |x-z_1|^{\pi/\alpha'}.$$

By combining the latter inequalities, we obtain that for every $x\in\Omega$ such that $|x-z_1| < \eta/2$,
\begin{equation}\label{eq:estimate_alpha_prime}
    \E_x(\sigma(\Omega)) \leq c_1 |x-z_1|^{\pi/\alpha'}. 
\end{equation}

\begin{figure} [h]
\includegraphics[width=0.5\linewidth]{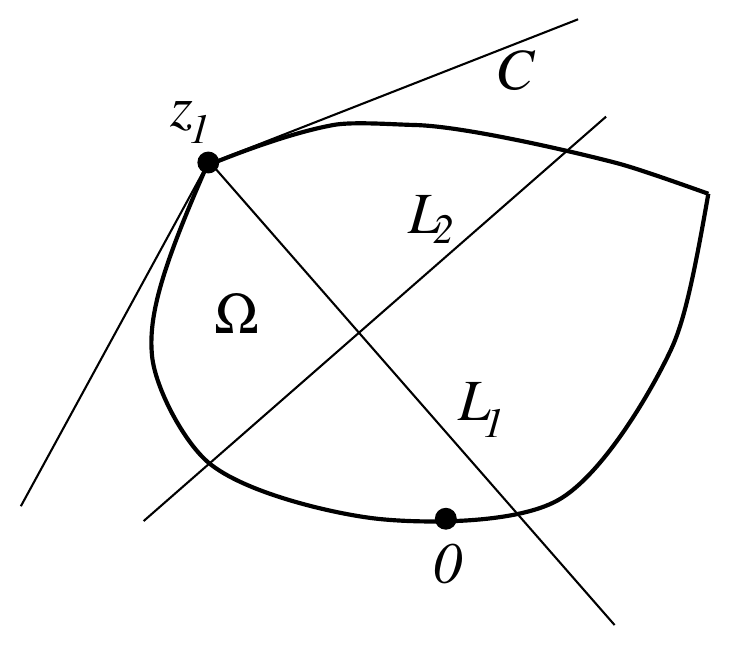}
\caption{A domain $\Omega$ with corners.}
\label{fig13}
\end{figure}
Let $L_3^\eps$ be a line orthogonal to $L_1$ crossing $\Om$, whose exact position, depending on $\eps$, will be specified below. Let $y^\eps_1$ and $y^\eps_2$ be the intersection points of  $L_3^\eps$ with $\prt\Om$.
Let $\rho_1^\eps=|y^\eps_1-y^\eps_2|$ and let $\rho_2^\eps$ be the distance from $z_1$ to $L_3^\eps$.
Let $\Om_\eps'$ be the connected component of $\Om\setminus L_3^\eps$ which contains $z_1$ in its boundary. We choose  $\rho_2^\eps$  so that 
$|\Om_\eps'|= \eps$. See Fig. \ref{fig14}.

There exists a cone $K_\eps$ of vertex $z_1$, radius $\rho_2^\eps$ and angle $\alpha/2$ which is included in $\Omega'_\eps$. Thus, 
$$|K_\eps|=\frac{\alpha}{4}{(\rho_2^\eps)}^2\leq |\Omega'_\eps| = \eps.$$

Therefore, for some $c_2<\infty$, 
\begin{align}\label{eq:rhosqrt}
    \rho_2^\eps \leq c_2 \sqrt{\eps}.
\end{align}

Let $M_1^\eps$ be one of the supporting lines for $\Om$ at $y_1^\eps$, i.e.,
a line that passes through $y_1^\eps$ and does not intersect the interior
of $\Om$. We define $M_2^\eps$ relative to $y_2^\eps$ analogously.
Let $v_\eps$ be the intersection point of $M_1^\eps$ and $M_2^\eps$.
Let $\rho_3^\eps$ be the distance from $v_\eps$ to $L_3^\eps$.
Let $\wt C_\eps$ be the cone with vertex $v_\eps$ and the sides in the lines
$M_1^\eps$ and $M_2^\eps$, containing  $\Om$.
Let $\theta_1$ and $\theta_2$ be defined by
$\wt C_\eps=\{v_\eps+ r e^{i\theta}: r>0, \theta\in(\theta_1, \theta_2)\}$.
Let $\rho_4^\varepsilon = \max(|v_\varepsilon- y_1^\varepsilon|,|v_\varepsilon- y_2^\varepsilon|)$ 
and $\rho_5^\eps$ be such that $v_\eps'= v_\eps+ \rho_5^\eps e^{i(\theta_1+\theta_2)/2}$ belongs to $L_2$ and let  $D_\eps:=\{v_\eps+ r e^{i\theta}: r\in(\rho_4^\eps,(\rho_5^\eps)^2/ \rho_4^\eps), \theta\in(\theta_1, \theta_2)\},$
see Figures \ref{fig14} and \ref{fig:distances}.

\begin{figure} [h]
\includegraphics[width=0.6\linewidth]{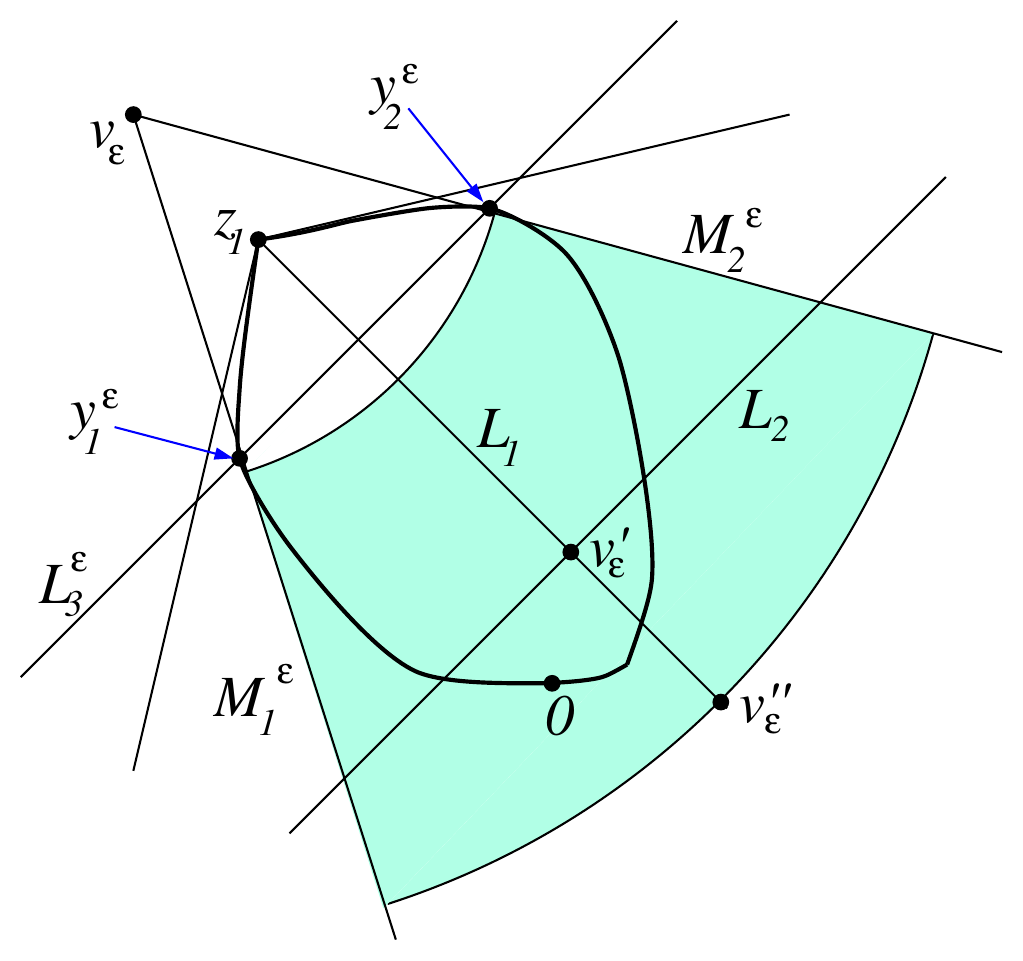}
\caption{The domain $D_\eps$ is colored. The point $v''_\eps$ is 
$v_\eps+ ((\rho_5^\eps)^2/ \rho_4^\eps) e^{i(\theta_1+\theta_2)/2}$.
The drawing is not to scale. In our construction, $|v_\eps''-v_\eps'|\gg |v_\eps-v_\eps'|$.  }
\label{fig14}
\end{figure}

\begin{figure}[h]
    \centering

\tikzset{every picture/.style={line width=0.75pt}} 

\begin{tikzpicture}[x=0.75pt,y=0.75pt,yscale=-1,xscale=1]

\draw  [line width=1.5]  (230.33,81) .. controls (230.33,81) and (301.33,60) .. (331.33,63.67) .. controls (361.33,67.33) and (419.67,94.55) .. (444.33,115) .. controls (469,135.45) and (444.33,191) .. (433.33,207) .. controls (422.33,223) and (402.33,230) .. (342.33,231) .. controls (282.33,232) and (261.33,232.67) .. (247.33,222.67) .. controls (233.33,212.67) and (223.33,199.67) .. (221.33,171.67) .. controls (219.33,143.67) and (230.33,81) .. (230.33,81) -- cycle ;
\draw  [line width=6] [line join = round][line cap = round] (340.33,231.33) .. controls (340.33,231.33) and (340.33,231.33) .. (340.33,231.33) ;
\draw [line width=1.5]  [dash pattern={on 1.69pt off 2.76pt}]  (230.33,81) -- (200,220) ;
\draw [line width=1.5]  [dash pattern={on 1.69pt off 2.76pt}]  (394.33,33) -- (230.33,81) ;
\draw [shift={(230.33,81)}, rotate = 163.69] [color={rgb, 255:red, 0; green, 0; blue, 0 }  ][fill={rgb, 255:red, 0; green, 0; blue, 0 }  ][line width=1.5]      (0, 0) circle [x radius= 3.05, y radius= 3.05]   ;
\draw [line width=1.5]    (476.33,67) -- (261.33,276) ;
\draw [line width=1.5]    (367.33,28) -- (229.34,161.98) -- (184,206) ;
\draw    (211.83,43) -- (224.33,228.67) ;
\draw    (211.83,43) -- (402.33,75.67) ;
\draw [shift={(211.83,43)}, rotate = 9.73] [color={rgb, 255:red, 0; green, 0; blue, 0 }  ][fill={rgb, 255:red, 0; green, 0; blue, 0 }  ][line width=0.75]      (0, 0) circle [x radius= 3.35, y radius= 3.35]   ;
\draw  [line width=6] [line join = round][line cap = round] (346.33,192.67) .. controls (346.33,192.67) and (346.33,192.67) .. (346.33,192.67) ;
\draw  [color={rgb, 255:red, 74; green, 144; blue, 226 }  ,draw opacity=1 ][line width=6] [line join = round][line cap = round] (220.33,170.67) .. controls (220.33,170.67) and (220.33,170.67) .. (220.33,170.67) ;
\draw  [color={rgb, 255:red, 74; green, 144; blue, 226 }  ,draw opacity=1 ][line width=6] [line join = round][line cap = round] (331.33,64.67) .. controls (331.33,64.67) and (331.33,64.67) .. (331.33,64.67) ;
\draw [color={rgb, 255:red, 208; green, 2; blue, 27 }  ,draw opacity=1 ] [dash pattern={on 4.5pt off 4.5pt}]  (201.54,47.33) -- (210.13,173.34) ;
\draw [shift={(210.33,176.33)}, rotate = 266.1] [fill={rgb, 255:red, 208; green, 2; blue, 27 }  ,fill opacity=1 ][line width=0.08]  [draw opacity=0] (8.93,-4.29) -- (0,0) -- (8.93,4.29) -- cycle    ;
\draw [shift={(201.33,44.33)}, rotate = 86.1] [fill={rgb, 255:red, 208; green, 2; blue, 27 }  ,fill opacity=1 ][line width=0.08]  [draw opacity=0] (8.93,-4.29) -- (0,0) -- (8.93,4.29) -- cycle    ;
\draw   (257.04,121.04) -- (264.14,113.75) -- (271.43,120.84) -- (264.34,128.14) -- cycle ;
\draw [color={rgb, 255:red, 208; green, 2; blue, 27 }  ,draw opacity=1 ] [dash pattern={on 4.5pt off 4.5pt}]  (232.49,83.09) -- (269.28,118.76) ;
\draw [shift={(271.43,120.84)}, rotate = 224.11] [fill={rgb, 255:red, 208; green, 2; blue, 27 }  ,fill opacity=1 ][line width=0.08]  [draw opacity=0] (8.93,-4.29) -- (0,0) -- (8.93,4.29) -- cycle    ;
\draw [shift={(230.33,81)}, rotate = 44.11] [fill={rgb, 255:red, 208; green, 2; blue, 27 }  ,fill opacity=1 ][line width=0.08]  [draw opacity=0] (8.93,-4.29) -- (0,0) -- (8.93,4.29) -- cycle    ;
\draw    (211.83,43) -- (427.33,283) ;
\draw [color={rgb, 255:red, 208; green, 2; blue, 27 }  ,draw opacity=1 ] [dash pattern={on 4.5pt off 4.5pt}]  (222.01,39.23) -- (350.32,181.44) ;
\draw [shift={(352.33,183.67)}, rotate = 227.94] [fill={rgb, 255:red, 208; green, 2; blue, 27 }  ,fill opacity=1 ][line width=0.08]  [draw opacity=0] (8.93,-4.29) -- (0,0) -- (8.93,4.29) -- cycle    ;
\draw [shift={(220,37)}, rotate = 47.94] [fill={rgb, 255:red, 208; green, 2; blue, 27 }  ,fill opacity=1 ][line width=0.08]  [draw opacity=0] (8.93,-4.29) -- (0,0) -- (8.93,4.29) -- cycle    ;
\draw  [draw opacity=0] (221.82,54.3) .. controls (219.48,56.46) and (216.54,58) .. (213.26,58.66) -- (209.75,41.75) -- cycle ; \draw   (221.82,54.3) .. controls (219.48,56.46) and (216.54,58) .. (213.26,58.66) ;  
\draw  [draw opacity=0] (231.95,46.79) .. controls (231.11,51.12) and (228.93,54.97) .. (225.86,57.9) -- (211.42,42.8) -- cycle ; \draw   (231.95,46.79) .. controls (231.11,51.12) and (228.93,54.97) .. (225.86,57.9) ;  
\draw [color={rgb, 255:red, 74; green, 74; blue, 226 }  ,draw opacity=1 ] [dash pattern={on 4.5pt off 4.5pt}]  (239.45,189.21) -- (346.21,82.45) ;
\draw [shift={(348.33,80.33)}, rotate = 135] [fill={rgb, 255:red, 74; green, 74; blue, 226 }  ,fill opacity=1 ][line width=0.08]  [draw opacity=0] (8.93,-4.29) -- (0,0) -- (8.93,4.29) -- cycle    ;
\draw [shift={(237.33,191.33)}, rotate = 315] [fill={rgb, 255:red, 74; green, 74; blue, 226 }  ,fill opacity=1 ][line width=0.08]  [draw opacity=0] (8.93,-4.29) -- (0,0) -- (8.93,4.29) -- cycle    ;
\draw [color={rgb, 255:red, 189; green, 16; blue, 224 }  ,draw opacity=1 ] [dash pattern={on 4.5pt off 4.5pt}]  (241.97,24.07) -- (303.58,86.2) ;
\draw [shift={(305.69,88.33)}, rotate = 225.24] [fill={rgb, 255:red, 189; green, 16; blue, 224 }  ,fill opacity=1 ][line width=0.08]  [draw opacity=0] (8.93,-4.29) -- (0,0) -- (8.93,4.29) -- cycle    ;
\draw [shift={(239.86,21.94)}, rotate = 45.24] [fill={rgb, 255:red, 189; green, 16; blue, 224 }  ,fill opacity=1 ][line width=0.08]  [draw opacity=0] (8.93,-4.29) -- (0,0) -- (8.93,4.29) -- cycle    ;
\draw   (291.68,87.64) -- (299.03,80.98) -- (305.69,88.33) -- (298.34,94.99) -- cycle ;
\draw    (141.33,194) -- (190.33,194) ;
\draw [shift={(192.33,194)}, rotate = 180] [color={rgb, 255:red, 0; green, 0; blue, 0 }  ][line width=0.75]    (10.93,-3.29) .. controls (6.95,-1.4) and (3.31,-0.3) .. (0,0) .. controls (3.31,0.3) and (6.95,1.4) .. (10.93,3.29)   ;

\draw (337,241.4) node [anchor=north west][inner sep=0.75pt]    {$0$};
\draw (196,22.4) node [anchor=north west][inner sep=0.75pt]  [font=\small]  {$v_{\varepsilon }$};
\draw (216,65) node [anchor=north west][inner sep=0.75pt]  [font=\small]  {$z_{1}$};
\draw (323,182.4) node [anchor=north west][inner sep=0.75pt]  [font=\small]  {$v'_{\varepsilon }$};
\draw (322,68.4) node [anchor=north west][inner sep=0.75pt]  [font=\small,color={rgb, 255:red, 0; green, 0; blue, 0 }  ,opacity=1 ]  {$y_{2}^{\varepsilon }$};
\fill (330,64) circle (3pt);
\draw (226.34,164.38) node [anchor=north west][inner sep=0.75pt]  [font=\small,color={rgb, 255:red, 0; green, 0; blue, 0 }  ,opacity=1 ]  {$y_{1}^{\varepsilon }$};
\fill (221,170) circle (3pt);
\draw (18.84,107.01) node [anchor=north west][inner sep=0.75pt]  [font=\footnotesize,color={rgb, 255:red, 208; green, 2; blue, 27 }  ,opacity=1 ,rotate=-358.99]  {$\rho _{4}^{\varepsilon } :=\max\left( |v_{\varepsilon } -\ y_{1}^{\varepsilon } |,|v_{\varepsilon } -\ y_{2}^{\varepsilon } |\right)$};
\draw (235,100.4) node [anchor=north west][inner sep=0.75pt]  [font=\footnotesize,color={rgb, 255:red, 208; green, 2; blue, 27 }  ,opacity=1 ]  {$\rho _{2}^{\varepsilon }$};
\draw (319,124.4) node [anchor=north west][inner sep=0.75pt]  [font=\footnotesize,color={rgb, 255:red, 208; green, 2; blue, 27 }  ,opacity=1 ]  {$\rho _{5}^{\varepsilon }$};
\draw (265,162.4) node [anchor=north west][inner sep=0.75pt]  [font=\footnotesize,color={rgb, 255:red, 74; green, 82; blue, 226 }  ,opacity=1 ]  {$\rho _{1}^{\varepsilon }$};
\draw (271,31.4) node [anchor=north west][inner sep=0.75pt]  [font=\footnotesize,color={rgb, 255:red, 189; green, 16; blue, 224 }  ,opacity=1 ]  {$\rho _{3}^{\varepsilon }$};
\draw (461,84.4) node [anchor=north west][inner sep=0.75pt]    {$L_{2}$};
\draw (119,178.4) node [anchor=north west][inner sep=0.75pt]    {$L_{3}^{\varepsilon }$};


\filldraw[black] (346,193) circle (3pt);
\draw (119,178.4) node [anchor=north west][inner sep=0.75pt]    {$L_{3}^{\varepsilon }$};
\filldraw[black] (342,231) circle (3pt);

\end{tikzpicture}

    \caption{Illustration of $\rho_1^\eps$, $\rho_2^\eps$, $\rho_3^\eps$, $\rho_4^\eps$ and $\rho_5^\eps$.}
    \label{fig:distances}
\end{figure}

The function $z\longmapsto 2(\log(z-v_\eps) -\log \rho_5^\eps - i \theta_1)/(\theta_2-\theta_1)$ maps $D_\eps$ onto the rectangle
\begin{align*}
 D'_\eps:=   \{(x_1,x_2): x_1\in (2\log(\rho_4^\eps/\rho_5^\eps)/(\theta_2-\theta_1),2\log(\rho_5^\eps/\rho_4^\eps)/(\theta_2-\theta_1))  , x_2\in(0,2)\}
\end{align*}
and the point $v'_\eps$ onto $i=(0,1)$.

Let 
\begin{align*}
    \prt _L D'_\eps = \{(x_1,x_2): x_1=2\log(\rho_4^\eps/\rho_5^\eps)/(\theta_2-\theta_1)  , x_2\in(0,2)\}.
\end{align*}

A continuous trajectory starting from $v'_\eps$ which hits $L_3^\eps$ before exiting $\Om$
must hit $\prt \calB(v_\eps, \rho_4^\eps)$ before hitting any other part of the  boundary of $D_\eps$.
This and the conformal invariance of harmonic measure imply that
\begin{align}\label{eq:omcone}
    \P_{v'_\eps}(\tau(L_3^\eps) < \sigma(\Om))
    \leq  \P_{v'_\eps} (\tau(\prt \calB(v_\eps, \rho_4^\eps) )= \tau (\prt D_\eps))
    = \P_i(\tau(\prt _L D'_\eps) = \tau(\prt D'_\eps)).
\end{align}

According to \cite[Lemma IV.5.1]{GarMar},
\begin{align}\label{eq:harest}
    \P_i(\tau(\prt _L D'_\eps) = \tau(\prt D'_\eps))
    &\leq \frac {16} \pi \exp((\pi/2)\cdot 2\log(\rho_4^\eps/\rho_5^\eps)/(\theta_2-\theta_1)  )\\
    & \leq \frac {16} \pi \exp(\pi\log(\rho_4^\eps/\rho_5^\eps)/\alpha )\notag\\ 
    & = \frac {16} \pi (\rho_4^\eps)^{\pi/\alpha} (\rho_5^\eps)^{-\pi/\alpha }.  \notag
\end{align}

It is easy to see that $\rho_3^\eps/\rho_2^\eps \downarrow 1$ when $\eps\downarrow 0$.
So for some $\eps_5>0$ and all $\eps\in(0,\eps_5)$, $\rho_3^\eps\leq 2\rho_2^\eps $, and,
in view of \eqref{eq:rhosqrt}, $\rho_3^\eps\leq 2 c_2 \sqrt{\eps}$.
Since $z_1$ is the vertex of $C$ and the angle of $C$ is $\alpha < \pi$, $L_1$ the symmetry axis of $C$ and $L_3^\eps$ the orthogonal axis to $L_1$ which is of distance $\rho_2^\eps$ from the vertex $z_1$, we have $\rho_1^\eps \leq 2\tan(\alpha/2)\rho_2^\eps$, see Fig. \ref{fig:distances}.
We then apply \eqref{eq:rhosqrt} again to see that there is $c_{3}<\infty$
such that $\rho_1^\eps\leq  c_3 \sqrt{\eps}$.  We combine these estimates to obtain $\rho_4^\eps \leq \rho_1^\eps + \rho_3^\eps
\leq c_{8} (2c_2+c_3) \sqrt{\eps}= c_4\sqrt{\eps}$. At last, it is elementary to see that $\rho_5^\eps$ does not go to $0$ when $\eps \downarrow0$.

Therefore, by combining the latter estimates with \eqref{eq:estimate_alpha_prime}, we obtain the existence of a constant $c_5<\infty$ such that for every $y\in L_3^\eps\cap \Omega$
\begin{equation}\label{eq:15_10_1}
    \E_y(\sigma(\Om)) \leq  c_5 \eps^{\pi/(2\alpha')}. 
\end{equation}

These observations,  \eqref{eq:omcone} and \eqref{eq:harest} imply that
for some $c_{6},c_{7}<\infty$ and $\eps_6>0$ and all $\eps\in(0,\eps_6)$,
\begin{align*}
    \P_{v'_\eps}(\tau(L_3^\eps) < \sigma(\Om))
    \leq   c_{6}(\rho_4^\eps)^{\pi/\alpha} \leq c_{7} \eps^{\pi/(2\alpha)} .
\end{align*}

The function $y\longmapsto \P_{y}(\tau(L_3^\eps) < \sigma(\Om))$
is positive harmonic in $\Om\setminus L_3^\eps$, so  a standard application of the boundary Harnack principle and the last estimate show that 
for some $c_{8}<\infty$, independent from $\eps$, and all $y\in L_2\cap \Om_\eps$,
\begin{align*}
    \P_{y}(\tau(L_3^\eps) < \sigma(\Om))
    \leq   c_{8}\eps^{\pi/(2\alpha)}.
\end{align*}

By the strong Markov property applied at $\tau(L_2)$,
for some $r_1>0$ and $x\in \Om$ with $|x|<r_1$,
\begin{align}\label{eq:esc1}
    \P_{x}(\tau(L_3^\eps) < \sigma(\Om))
    \leq   c_{8}\eps^{\pi/(2\alpha)} \P_{x}(\tau(L_2) < \sigma(\Om)).
\end{align}

Let $\wt\Om_\eps= \Om\setminus\Om_\eps'$
and note that $\wt\Om_\eps$ is convex and $|\wt\Om_\eps|=|\Om|-\eps$.
We have for $x\in \wt\Om_\eps $,
\begin{align}\label{eq:omminus}
    \E_x(\sigma(\wt\Om_\eps)) 
    = \E_x(\sigma(\Om)) 
    - \E_x\left((\sigma(\Om)-\tau(L_3^\eps)) 1_{\{\tau(L_3^\eps) < \sigma(\Om)\}}\right).
    \end{align}
    
By using \eqref{eq:estimate_alpha_prime}, \eqref{eq:15_10_1} and \eqref{eq:esc1}, we show that for some $r_3,\eps_8, c_{9}>0$, $\eps\in(0,\eps_8)$ and $x\in \Om$ with $|x|<r_3$,
\begin{align*}
    \E_x\left((\sigma(\Om)-\tau(L_3^\eps)) 1_{\{\tau(L_3^\eps) < \sigma(\Om)\}}\right)
    &=\int_{L_3^\eps} \E_y(\sigma(\Om)) \P_x(W_{\tau(L_3^\eps)}\in dy, \tau(L_3^\eps) < \sigma(\Om))\\
    &\leq\int_{L_3^\eps} c_5 \eps^{\pi/(2\alpha')} \P_x(W_{\tau(L_3^\eps)}\in dy, \tau(L_3^\eps) < \sigma(\Om))\\
    &= c_5 \eps^{\pi/(2\alpha')} \P_x( \tau(L_3^\eps) < \sigma(\Om))\\
    &\leq c_{5} \eps^{\pi/(2\alpha')} c_{8}\eps^{\pi/(2\alpha)} \P_{x}(\tau(L_2) < \sigma(\Om))\\
    &= c_{9}\eps^{\pi/(2\alpha)+\pi/(2\alpha')} \P_{x}(\tau(L_2) < \sigma(\Om)).
\end{align*}
To simplify the notation, we consider $\beta:=\pi/(2\alpha)+\pi/(2\alpha') >1$, so that the latter estimate is written
\begin{align}\label{eq:ommintau}
    \E_x&\left((\sigma(\Om)-\tau(L_3^\eps)) 1_{\{\tau(L_3^\eps) < \sigma(\Om)\}}\right)
    \leq c_{9}\eps^\beta \P_{x}(\tau(L_2) < \sigma(\Om)).
\end{align}

Suppose that $A\subset \Om$ is a disc with a positive radius and a positive distance to $\prt\Om$.
There exists $c_{10}>0$ such that $ \E_y (\sigma(\Om)) > c_{10}$, for all $y\in A$.
An application of the strong Markov property at the hitting time of $A$
 and the boundary Harnack principle
stated in Lemma \ref{lem:BHPgeneral} \textit{(i)} shows that
for some $r_4,\eps_{9}, c_{10},c_{11}>0$, $\eps\in(0,\eps_{9})$ and $x\in\wt \Om_\eps$  with $|x|<r_4$,
\begin{align*}
    \E_x (\sigma(\Om)) \geq c_{10}\P_x (\tau(A) < \sigma(\Om))\geq c_{11}\P_x (\tau(L_2) < \sigma(\Om)).
\end{align*}

We combine this with  \eqref{eq:omminus} and \eqref{eq:ommintau} to see that
there are $r_5, \eps_{10}>0$ such that
for  $\eps\in(0,\eps_{10})$ and $x\in \Om_\eps$ with $|x|\in(0,r_5)$,
\begin{align}\notag
    \E_x(\sigma(\wt\Om_\eps)) 
    &= \E_x(\sigma(\Om)) 
    - \E_x\left((\sigma(\Om)-\tau(L_3^\eps)) 1_{\{\tau(L_3^\eps) < \sigma(\Om)\}}\right)\\
&\geq \E_x(\sigma(\Om)) 
-  c_{9}\eps^\beta \P_{x}(\tau(L_2) < \sigma(\Om))\notag\\
&\geq \E_x(\sigma(\Om)) 
-  (c_{9}/c_{11})\eps^\beta \E_x(\sigma(\Om))\notag\\
&= (1-  (c_{9}/c_{11})\eps^\beta) \E_x(\sigma(\Om))).\label{eq:afin}
    \end{align}
    
    Recall that $|\Om|=1$ and $|\wt\Om_\eps|=1-\eps$.
Fix an $\eps>0$ satisfying the above inequality and such that 
\begin{align}\label{eq:cs}
    (1-  (c_{9}/c_{11})\eps^\beta) >(1+\eps/4)\sqrt{|\Om|-\eps}=(1+\eps/4)\sqrt{1-\eps}.
\end{align}

Lemma \ref{lem:supbound} shows that we can find $z_n\in \prt_r\wt\Om_\eps$ such that $\lim_{n\to\infty} z_n = 0$, and 
\begin{align*}
    \lim_{\delta\to 0} \frac 1 \delta \E_{z_n + \delta \normal(z_n)} (\sigma(\wt\Om_\eps))&=|\nabla u_{\wt\Om_\eps}(z_n)|_{\mathrm{mf}},\\
    \lim_{\delta\to 0} \frac 1 \delta \E_{z_n + \delta \normal(z_n)} (\sigma(\Om))&\geq J(\Om)-1/n.
\end{align*}

This, \eqref{eq:cs} and \eqref{eq:afin} imply that
\begin{align*}
    |\nabla u_{\wt\Om_\eps}(z_n)|_{\mathrm{mf}}
    &=\lim_{\delta\to 0} \frac 1 \delta \E_{z_n + \delta \normal(z_n)} (\sigma(\wt\Om_\eps))\\
 &   \geq (1-  (c_{9}/c_{11})\eps^\beta) \lim_{\delta\to 0} \frac 1 \delta \E_{z_n + \delta \normal(z_n)} (\sigma(\Om))\\
  &  \geq(1+\eps/4) \sqrt{|\Om|-\eps} (J(\Om)-1/n).
\end{align*}

Since $\eps>0$ is fixed,  for some $n$,
\begin{align*}
    |\nabla u_{\wt\Om_\eps}(z_n)|_{\mathrm{mf}}  \geq (1+\eps/8) \sqrt{|\Om|-\eps} J(\Om).
\end{align*}

We see that $\|\nabla u_{\wt \Om_\eps}\|_\infty /\sqrt{|\Om|-\eps}\geq (1+\eps/8) J(\Om)$. Therefore, $\Om$ is not the optimizer
in the sense of \eqref{defOptimizer}.
The assumption that $\Om$ has a corner leads to a contradiction.

\end{proof}

\section{Line segment on the boundary of optimizer}\label{sec:line}

In this section, we prove Theorem \ref{main:thm} \textit{(iii)}. First, we restate the result to better align it with our argument. 

\begin{theorem}\label{th:line}
    Suppose that  $\Om$ is a convex optimizer in the sense of \eqref{defOptimizer}. We choose a coordinate system so that $\Om\subset\{(x_1,x_2):x_2\geq 0\}$, $0\in \prt \Om$, and  for every $n>0$ there exists
a neighborhood $U_n\subset \prt \Om$ of $0$ such that the harmonic measure of
 $\{y\in U_n \cap \prt_r\Om: |\nabla u_\Omega(y)|_{\mathrm{mf}}> J(\Om)-1/n\}$ is strictly positive  (see Lemma \ref{lem:supbound} (c)). Then,  there exists $a>0$ such that
    \begin{align}\label{segment}
        \{(x_1,0): |x_1|\leq a\} \subset\partial\Om.
    \end{align}
\end{theorem}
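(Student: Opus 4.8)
The plan is to argue by contradiction: suppose that for every $a>0$ the segment $\{(x_1,0):|x_1|\le a\}$ is not contained in $\partial\Om$. By Theorem \ref{main:thm}\,(ii), $\partial\Om$ is $C^1$, so near $0$ it is the graph $x_2=f(x_1)$ of a convex function $f$ with $f(0)=f'(0)=0$ and $f\ge 0$; the contradiction hypothesis together with convexity means that $f$ does not vanish identically on any interval $(-a,a)$, so $\int_{-b}^{b}f(x_1)^2/x_1^2\,dx_1>0$ for every $b>0$ (it is finite since $f(x_1)^2/x_1^2\to0$). Normalizing $|\Om|=1$, so $J(\Om)=\|\nabla u_\Om\|_\infty=J_{\max}$, I would first reduce, using Lemma \ref{lem:supbound}\,(c) (possibly after replacing $0$ by a nearby near‑maximal boundary point and passing to a limit), to the situation $|\nabla u_\Om(0)|_{\mathrm{mf}}=J_{\max}$, which by Lemma \ref{lem:supbound}\,(a)(iv) equals $\lim_{\delta\downarrow0}\tfrac1\delta u_\Om(\delta\mathbf e_2)$.

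The competitor is obtained by flattening $\partial\Om$ near $0$: for small $b>0$ set $\wt\Om_b=\mathrm{conv}\bigl(\Om\cup\{(x_1,0):|x_1|\le b\}\bigr)$. This is a convex body whose boundary contains the segment $\{(x_1,0):|x_1|\le b\}$, which contains $\Om$, and for which the added region $R_b:=\wt\Om_b\setminus\Om$ has area comparable to $A_b:=\int_{-b}^{b}f(x_1)\,dx_1$. Since $\Om\subset\wt\Om_b$, monotonicity of the Green's function gives $u_{\wt\Om_b}(\delta\mathbf e_2)\ge u_\Om(\delta\mathbf e_2)+\int_{R_b}G_{\wt\Om_b}(\delta\mathbf e_2,y)\,dy$; dividing by $\delta$ and letting $\delta\downarrow0$, the first term tends to $J_{\max}$ and the second to $\int_{R_b}P_{\wt\Om_b}(0,y)\,dy$, where $P_{\wt\Om_b}(0,\cdot)=\lim_{\delta\downarrow0}\tfrac1\delta G_{\wt\Om_b}(\delta\mathbf e_2,\cdot)$ is the minimal‑fine (Martin) kernel of $\wt\Om_b$ at $0$. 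Because the part of $\partial\wt\Om_b$ of size $\le b$ is the flat segment, the boundary Harnack principle (Lemma \ref{lem:BHPgeneral}) compares $P_{\wt\Om_b}(0,y)$ with the half‑plane Poisson kernel $\tfrac{2}{\pi}\,x_2/|y|^2$, uniformly on $R_b$ — points of $R_b$ satisfy $x_2<f(x_1)\ll|x_1|$, hence lie well inside a Stolz cone at $0$ — which yields a constant $c_0>0$ independent of $b$ with
\begin{align*}
  |\nabla u_{\wt\Om_b}(0)|_{\mathrm{mf}}\ \ge\ J_{\max}\ +\ c_0\int_{-b}^{b}\frac{f(x_1)^2}{x_1^2}\,dx_1\ =:\ J_{\max}+c_0 I_b ,\qquad I_b>0 .
\end{align*}

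Since $|\wt\Om_b|=1+|R_b|>1$, this gain must be paid for. I would restore unit area by excising a small region $S$ of area $|R_b|$ from $\wt\Om_b$ far from $0$, near a boundary point of positive curvature, cutting along a nearly tangent chord; the chain of estimates in Section \ref{sec:nocorners} culminating in \eqref{eq:afin} (with $S$ in the role of $\Om'_\eps$ and the thin‑angled corners created by the chord in place of $z_1$), together with Proposition \ref{Lem:hvsu}, then shows that this excision decreases $|\nabla u(0)|_{\mathrm{mf}}$ by an amount negligible compared with $I_b$ as $b\downarrow0$. It follows that the competitor $\Om'_b:=\wt\Om_b\setminus S$ satisfies $|\Om'_b|=1$ and $\|\nabla u_{\Om'_b}\|_\infty\ge|\nabla u_{\Om'_b}(0)|_{\mathrm{mf}}>J_{\max}$, hence $J(\Om'_b)>J_{\max}=J(\Om)$, contradicting optimality; therefore $\partial\Om$ must contain a segment $\{(x_1,0):|x_1|\le a\}$.

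The main obstacle I expect is the quantitative bookkeeping in the last two paragraphs: proving $|\nabla u_{\wt\Om_b}(0)|_{\mathrm{mf}}\ge J_{\max}+c_0 I_b$ with $c_0$ \emph{uniform} in $b$ (which requires the boundary Harnack comparison to be valid down to scale $b$, controlling how $\partial\wt\Om_b$ ceases to be flat at that scale), and showing that the area‑compensating excision costs strictly less than $I_b$ — this is delicate in the nearly flat regime, where $I_b$ and $A_b$ have the same order of magnitude, so that the excision loss must be shown to be of genuinely lower order (e.g.\ super‑polynomially small in $|R_b|$, exploiting the vanishing opening angle of the chord). Secondary technical points are checking that $\wt\Om_b$ is convex and that the small corners it acquires at $(\pm b,0)$ do not affect the $\delta\downarrow0$ limit, and carefully justifying the reduction to $|\nabla u_\Om(0)|_{\mathrm{mf}}=J_{\max}$.
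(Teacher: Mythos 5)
Your proposal takes a genuinely different route from the paper's, and the difference is not cosmetic: it is exactly where the argument breaks. You flatten $\partial\Om$ \emph{at} the maximal-gradient point $0$, producing $\wt\Om_b=\mathrm{conv}(\Om\cup\{(x_1,0):|x_1|\le b\})$. The paper instead \emph{shifts} a small piece of $\partial\Om$ downward by $\delta$ over a window $[a_1,a_2]$ located near but strictly away from $0$, so that the boundary at $0$ is unchanged and the effect on $|\nabla u(0)|_{\mathrm{mf}}$ is felt indirectly through the torsion function deep in $\Om$.

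The decisive problem is the size of your gain relative to the area you must pay back. Your added region $R_b$ has height $f(x_1)$ above the segment, so its area is $A_b:=\int_{-b}^b f\,dx_1$, while your Poisson-kernel estimate gives gain of order $I_b:=\int_{-b}^b f(x_1)^2/x_1^2\,dx_1$. After rescaling to unit area (or any equivalent compensation) the new functional beats $J_{\max}$ only if $c_0 I_b>\tfrac12 J_{\max} A_b$, i.e. $I_b/A_b$ is bounded \emph{below} by a fixed constant. But this fails for an essential class of admissible $\Om$: if $f(x)=cx^2$ with $c>0$ small, which is perfectly consistent with a $C^1$ boundary having no segment through $0$, then $I_b/A_b=c$ is arbitrarily small, so the gain is swamped by the area cost. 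The tapering of $R_b$ to zero height at $0$ is the issue: most of the Poisson mass near $0$ is wasted. Your fallback — excising a cap $S$ elsewhere and invoking the Section~\ref{sec:nocorners} estimate culminating in \eqref{eq:afin} — does not help, because those estimates produce an $\eps^\beta$ with $\beta>1$ only by exploiting a corner of opening $\alpha<\pi$ \emph{on $\partial\Om$ itself} (through $\wt u_{\alpha',\eta}(x)\lesssim|x-z_1|^{\pi/\alpha'}$). By Theorem~\ref{main:thm}\,(ii) the optimizer has no corners; cutting a chord off a $C^1$ boundary produces corners in the \emph{competitor}, not in $\Om$, and these are of no use in those estimates. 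A chord-cap of area $\eps$ at a smooth boundary point costs a loss of order $\eps$ in $|\nabla u(0)|_{\mathrm{mf}}$, again comparable to $A_b$ and hence to $I_b$ in the small-curvature regime, so the excision does not decouple as claimed.

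The paper's shift construction avoids this entirely because it has an extra free parameter: the shift depth $\delta$ is independent of the window width $b_2$. The key estimates (Lemma~\ref{lem:dilat} and \eqref{hitR2}) give a \emph{multiplicative} gain $\E_x(\sigma(\Om_1))\ge(1+c\,\delta/b_2)\E_x(\sigma(\Om))$ for $x$ near $0$, while the added area is only $\lesssim\delta b_2$; the gain-to-area ratio is $\sim 1/b_2^2$ and can be made arbitrarily large by taking $b_2\to0$ \emph{before} sending $\delta\to0$. Your construction ties the "shift depth" to the given function $f$, losing this degree of freedom, and the ratio $I_b/A_b$ is then dictated by the (possibly tiny) curvature of $\partial\Om$ at $0$ rather than being at your disposal. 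To repair the argument along your lines you would need a competitor whose gain per unit added area is unbounded below as a tunable parameter goes to zero — which is exactly what the paper's two-parameter shift-and-window construction achieves and a one-parameter flattening does not.

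A secondary gap, smaller but real: the reduction to a single boundary point with $|\nabla u_\Om(0)|_{\mathrm{mf}}=J_{\max}$ is not available from Lemma~\ref{lem:supbound}, which only gives near-maximal points accumulating at $0$; the paper therefore works with the full family of competitors and the harmonic-measure statement of Lemma~\ref{lem:supbound}\,(c) rather than with a single exact maximizer of the minimal-fine gradient.
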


\begin{proof}

Suppose that \eqref{segment} is false. We will modify $\Om$ to construct a convex domain $\Omega_1$ such that $|\Omega_1|=|\Omega|$ and
\begin{align}\label{eq:contr}
    J(\Omega_1) > J(\Omega).
\end{align}

\emph{Step 1. Construction of the modified domain $\Om_1$}.

For the general idea of the construction of $\Omega_1$, we refer to Fig. \ref{fig1}. We will construct a region $\Om'$ that will be added to $\Om$. See Figure  \ref{fig3} for a magnification of $\Om'$.

\begin{figure} [h]
\includegraphics[width=0.4\linewidth]{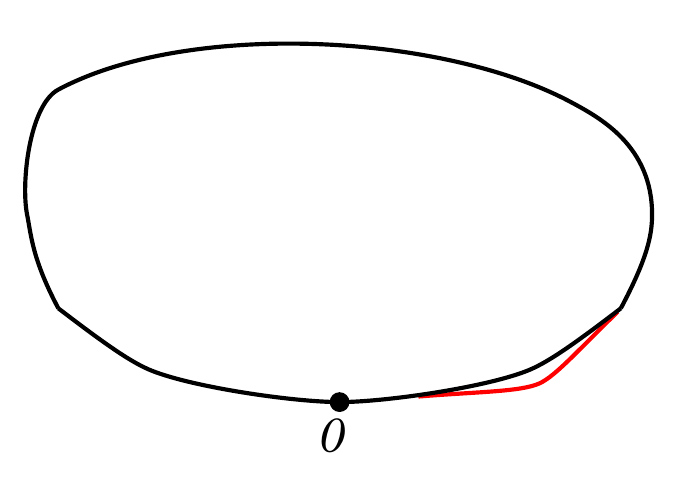}
\caption{A domain $\Omega$ that does not satisfy \eqref{segment}. A new region $\Om'$ close to $0$, with lower boundary colored red, will be added to $\Omega$. See Fig. \ref{fig3} for more detail. }
\label{fig1}
\end{figure}

According to Theorem \ref{main:thm} \textit{(ii)}, $\prt \Om$ has no corners so
we can find a small $a_*>0$ so that
we can parameterize a part of $\prt \Om$ as follows,
\begin{align*}
 \{v_a=(a,f(a))\in \prt \Om : -a_*\leq a \leq a_*\}.
\end{align*}

Since $\prt \Om$ has no corners, in particular at the origin,
we can decrease $a_*$, if necessary, so that
$f(a) < a/16$ for $-a_*\leq a\leq a_*$.

We are assuming that \eqref{segment} does not hold. Without loss of generality, we will assume that $f(a) >0$ for $0< a\leq a_*$.

Since $\prt \Om$ has no corners, for every $b>0$, the set
$\{v_a: 0\leq a \leq b\}$ cannot be a line segment.
It follows that  for every $b>0$, we can find $a_2\in(0,b)$ such that $v_{a_2}$ 
 is not in the interior of a line segment in $\prt \Om$, i.e., for all $\eps>0$, $\{v_{a}: a\in(a_2-\eps, a_2+\eps)\}  $ is not a line segment.
We will specify the value of $a_2>0$ later in the proof.
We continue the definitions assuming that $v_{a_2}$
 is not in the interior of a line segment in $\prt \Om$.
Given $a_2>0$, we find $a_1\in(0,a_2/2)$  such that $v_{a_1}$
 is not in the interior of a line segment in $\prt \Om$, i.e., for all $\eps>0$, $\{v_{a}: a\in(a_1-\eps, a_1+\eps)\}  $
is not a line segment.

 For $\delta > 0$, we consider the shift 
 \begin{align}\label{shiftT}
     \calT_\delta((x_1,x_2))  = (x_1,x_2-\delta)
 \end{align}
 and let $ x_\delta = \calT_\delta(x)$.

An idea for the enlargement of $\Om$ is to take the union of $\Om$ and the shift of a piece of the domain $\{(x_1,x_2)\in \calT_\delta(\Om) \setminus \Om: a_1< x_1 < a_2\}$, and let $\Om_1$ be the convex hull of the union. This causes an issue, as forming the convex hull may alter the boundary of $\Omega$ in a manner that is not localized within a region of arbitrarily small size.
For this reason, we will find $a_1\leq b_1 < b_2\leq a_2$ such that $b_1-a_1 < a_2/64$ and $a_2-b_2 < a_2/64$. Then, we will apply our idea with $\{(x_1,x_2)\in \calT_\delta(\Om) \setminus \Om: b_1< x_1 < b_2\}$ rather than $\{(x_1,x_2)\in \calT_\delta(\Om) \setminus \Om: a_1< x_1 < a_2\}$.
 
\begin{itemize}
    \item If
\begin{align}\label{notline}
  \forall\eps>0,\ \ \   \{v_a  :  a\in (a_1-\eps,a_1)\}\ \ \text{is not a line segment,}
\end{align}
 then, we let $z_1 = v_{a_1}$
and $z_3= \calT_\delta(z_1)$, where $\delta>0$ should be thought of as a very small number, to be specified later.
Given $a_1$, there is $\delta_*>0$ such that if  $\delta\in(0,\delta_*)$,
then the convex hull of $\Om \cup \{z_3\}$
contains a unique point $z_6 =v_{a_3}$ such that $0< a_3< a_1$ and the line segment
between $z_3$ and $z_6$ is outside the closure of $\Om$ except for $z_6$.

\item Suppose that
\begin{align*}
 \exists \eps_1>0,\ \  \   \{v_a  :  a\in (a_1-\eps_1,a_1)\}\ \ \text{is a line segment}
\end{align*}
 and let $K_1$ be the line containing this line segment.
In this case, we let
$z_6 = v_{a_1}$ and
$z_3=(z_3^1,z_3^2)$ be the intersection point of $K_1$ and $\calT_\delta(\prt\Om)$
with  $z_3^1>a_1$.
We let $z_1 = v_{a_4}$ where $a_4 = z_3^1$. \end{itemize}

We define points $z_2,z_4$ and $z_5$ analogously. Specifically,
\begin{itemize}
    \item if
\begin{align*}
   \forall \eps>0,\ \  \ \{v_a  :  a\in (a_2, a_2+\eps)\}\ \ \text{is not a line segment,}
\end{align*}
 then, we let $z_2 =v_{a_2}$
and $z_4= \calT_\delta(z_2)$.
There is $\delta_*>0$ such that if  $\delta\in(0,\delta_*)$
then the convex hull of $\Om \cup \{z_4\}$
contains a unique point $z_5 =v_{a_5}$ such that $a_5>a_2$ and the line segment
between $z_4$ and $z_5$ is outside the closure of $\Om$ except for $z_5$.

\item Suppose that
\begin{align}\label{line}
\exists \eps_2>0,\ \ \     \{v_a  :  a\in (a_2, a_2+\eps_2)\}\ \ \text{is a line segment}
\end{align}
 and let $K_2$ be the line containing this line segment.
In this case, we let
$z_5 = v_{a_2}$ and
$z_4=(z_4^1,z_4^2)$ be the intersection point of $K_2$ and $\calT_\delta(\prt\Om)$
with  $z_4^1<a_2$.
We let $z_2 = v_{a_6}$ where $a_6 = z_4^1$.
\end{itemize}

Let $b_1$ and $b_2$ denote the first coordinates of $z_1$ and $z_2$.
For small $\delta$, $a_1\leq b_1 < b_2\leq a_2$ and, moreover,
$\lim_{\delta\to0} b_1= a_1$ and $\lim_{\delta\to0}b_2= a_2$.
From now on, we will assume that
$\delta>0$ is so small that $b_1-a_1 < a_2/64$ and $a_2-b_2 < a_2/64$.

Fig. \ref{fig3} illustrates cases described in \eqref{notline} and \eqref{line}.

\begin{figure} [h]
\includegraphics[width=0.6\linewidth]{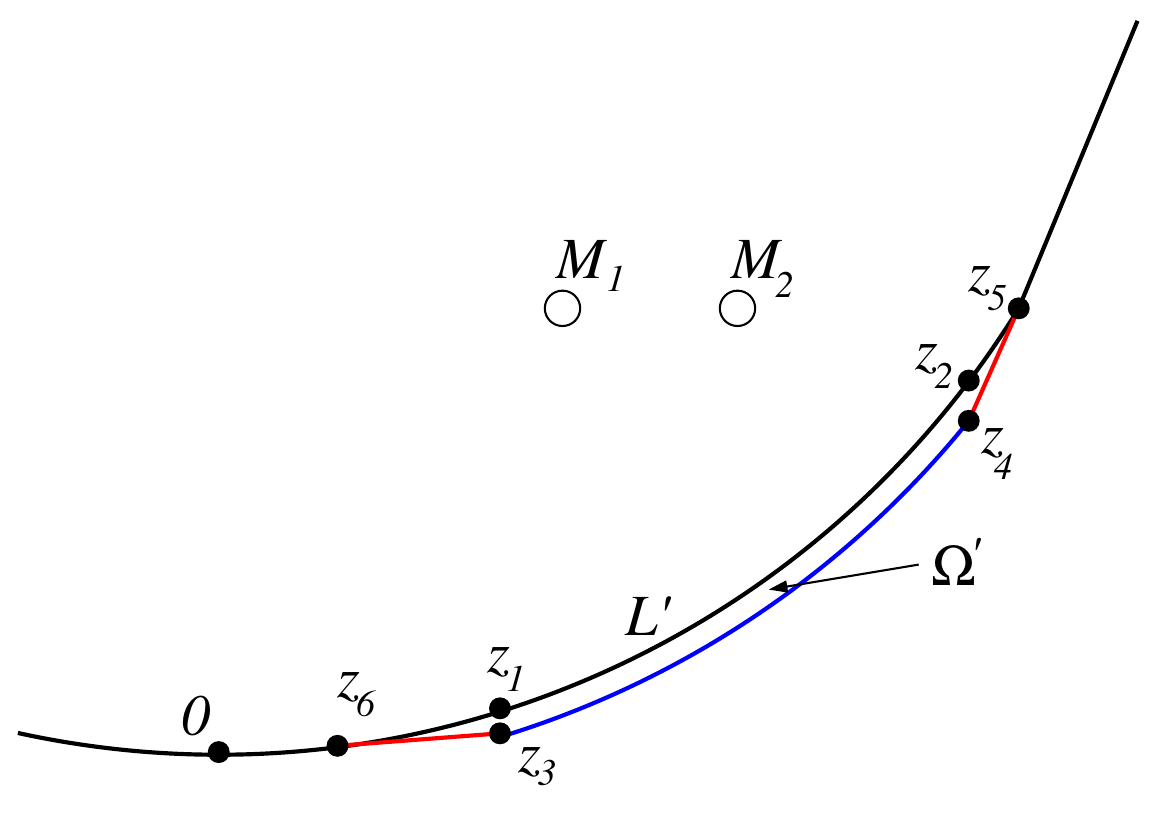}
\caption{Illustration of the cases described in \eqref{notline} and \eqref{line}. $L'$ is the arc of $\prt \Om$ between $z_1$ and $z_2$. }
\label{fig3}
\end{figure}

We will now formally define the lens $\Om' $ depicted in Fig. \ref{fig3}.
The set $\Om'$ is the closure of the union of the following three sets:

\textit{(i)} $\{(x_1,x_2)\in \calT_\delta(\Om) \setminus \Om: b_1< x_1 < b_2\}$,

\textit{(ii)} the intersection of the triangle with vertices $z_1, z_3$ and $z_6$ and $\R^2\backslash\Om$,

\textit{(iii)} the intersection of the triangle with vertices $z_2, z_4$ and $z_5$ and $\R^2\backslash\Om$.

The domain $\Om'$ depends on $a_1,a_2$ and $\delta$ which will be specified later.
We define $\Om_1 :=\Om \cup \Om' $ (to be precise, $\Om_1$ is the interior of this set). 

 Let 
$L'$ be the arc of $\prt \Om$ between $z_1$ and $z_2$. Its length is in the interval $[b_2/2, 2 b_2]$, assuming that $b_2>0$ is small.
The following estimates follow easily from the definition of  $\Om'$. See Fig. \ref{fig3}.
    For some $a_*,c_1, \delta_0>0$ and all $a_2\in(0,a_*)$ and $\delta\in(0,\delta_0)$,
    \begin{align}
         |\Om'| \leq c_1 \delta b_2.\label{area2}
    \end{align}

\emph{Step 2: Estimates.}

For $x\in\Om_1$, by the strong Markov property applied at  $\sigma(\Om)$,
\begin{align}\label{maineq}
    \E_x(\sigma(\Om_1) )  = \E_x(\sigma(\Om) )+ \E_x\left((\sigma(\Om_1)-\sigma(\Om)) 1_{\{ \sigma(\Om)< \sigma(\Om_1)\}}\right).
\end{align}
 Our goal now is to find a good lower bound for the second term on the right.

We will suppress the dependence on $b_2$ in the following notation. Let 
    \begin{align}\label{lm1}
        M_1&=\calB((7b_2/8,b_2/4), b_2/16),\\
        M_2 &=\calB((5b_2/8,b_2/4), b_2/16),\label{lm2}\\
        \prt_-M_1 &= \{(y_1,y_2)\in \prt M_1: y_2 < b_2/4\},\label{lm3}\\
\calS(r)&= \{(x_1,x_2): \max(|x_1|,|x_2|)\leq r\},\qquad r>0,\notag\\
    N_1&= \calS(b_1/1000) \cap \Om,\notag\\
    N_2 &= (\calS(6b_2)\setminus \calS(4b_2)) \cap \Om,\notag\\
    R_1&= \prt \calS(5b_2) \cap \Om,\notag\\
    R_2&= \prt\calS(7b_2) \cap\Om.\label{lm4}
\end{align}

Since $\prt\Om$ is tangential to the horizontal axis at $0$, we can choose
$b_2>0$ and $\delta>0$ so small that $N_1 \cap \Om'=\emptyset$,
$\Om'\subset \calS(3b_2)$
and the slope of $\prt \Om \cap \calS(8b_2)$ is between $-1/10$ and $1/10$. See Fig. \ref{fig4}.

\begin{figure} [h]
\includegraphics[width=0.7\linewidth]{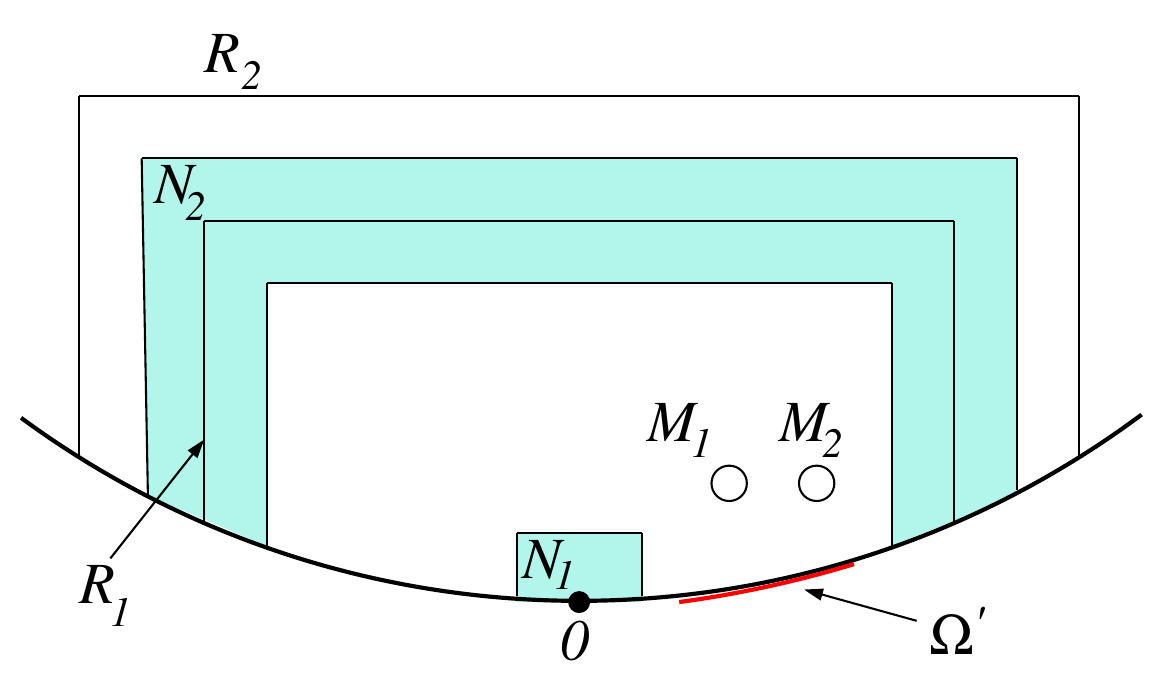}
\caption{
The red curve is a simplified representation of $\Om'$. Note that $\Om'$ is very thin and that the drawing is not to scale.}
\label{fig4}
\end{figure}

We have introduced discs $M_1$ and $M_2$ so that we can define Brownian trajectories that hit $\prt_-M_1$, then exit $\Om$, then hit $M_2$, and then spend some time in $\Om_1$ before exiting the latter domain. We can estimate the probability of going from $\prt_-M_1$ to $\prt \Om$ and then to $M_2$ before exiting $\Om_1$. Our estimate is presented in Lemma \ref{lem:dilat}. We do not know how to generate a useful estimate for the probability of exiting $\Om$ and spending some time in $\Om_1$ afterward in a more direct way.

The functions $x\longmapsto \P_x(\tau(R_2)< \sigma(\Om))$
and $x\longmapsto \P_x(\tau(\prt_-M_1)< \sigma(\Om))$ are positive harmonic
in $\calS(b_2/8)\cap \Om$ and have zero boundary values on $\prt \Om$.
The diameters of $\prt_-M_1$ and $R_2$ are comparable for every $b_2$, and, after rescaling by $b_2^{-1}$, their shapes
converge to limiting shapes, so it is easy to see that for
some constant $c'$, independent of $b_2$, 
\begin{align*}
    \P_{(0,b_2/16)}(\tau(\prt_-M_1)< \sigma(\Om)) \geq c' \P_{(0,b_2/16)}(\tau(R_2)< \sigma(\Om)).
\end{align*}

This and  Lemma \ref{lem:BHP} \textit{(i)} imply that  there exist $c''>0$
such that for all  $x\in\calS(b_2/8)\cap \Om$ and, therefore, for all $x\in N_1$,
$$  \frac{\P_x(\tau(\prt_-M_1)< \sigma(\Om))}{\P_{(0,b_2/16)}(\tau(\prt_-M_1)< \sigma(\Om))} 
    \geq c'' \frac{\P_x(\tau(R_2)< \sigma(\Om))}{\P_{(0,b_2/16)}(\tau(R_2)< \sigma(\Om))},$$
which implies that for $c_2=c'c''$ and all $x\in N_1$, we have
\begin{align}
    \P_x(\tau(\prt_-M_1)< \sigma(\Om)) &\geq c_2 \P_x(\tau(R_2)< \sigma(\Om)).\label{bhpN1}
\end{align}

A similar argument shows that for some $c_3$ and all $x\in N_1$,
\begin{align}\label{oc21.1}
    \P_x(\tau(\prt\Om \setminus \prt\Om_1)< \tau(R_2)) &\leq c_3 \P_x(\tau(R_2)< \sigma(\Om)) .
\end{align}

It is easy to see that for small $\delta$ and all $y\in \prt\Om \setminus \prt\Om_1$,  $\P_y(\tau(R_2) < \sigma(\Om_1)) < 1/2 $. This, \eqref{oc21.1} and the strong Markov property applied at the hitting time of $ \prt\Om \setminus \prt\Om_1$, show that for $x\in N_1$,
\begin{align*}
    \P_x(\tau(R_2)< \sigma(\Om_1)) &= \P_x(\tau(R_2)< \sigma(\Om)) +  \P_x(\sigma(\Om )  < \tau(R_2) < \sigma(\Om_1))\\
    &= \P_x(\tau(R_2)< \sigma(\Om)) +  \P_x(\tau(\prt\Om \setminus \prt\Om_1)  < \tau(R_2) < \sigma(\Om_1))\\
    &\leq \P_x(\tau(R_2)< \sigma(\Om)) + (1/2) \P_x(\tau(\prt\Om \setminus \prt\Om_1)  < \tau(R_2) )\\
    &\leq \P_x(\tau(R_2)< \sigma(\Om)) + (1/2) c_3 \P_x(\tau(R_2)< \sigma(\Om))\\
    &= (1+ c_3/2) \P_x(\tau(R_2)< \sigma(\Om)).
\end{align*}

This, combined with \eqref{bhpN1} yields for some $c_4$ and all $x\in N_1$,
\begin{align}
    \P_x(\tau(\prt_-M_1)< \sigma(\Om)) &\geq c_4 \P_x(\tau(R_2)< \sigma(\Om_1)).\label{bhpN1a}
\end{align}

By Lemma \ref{lem:dilat}, there exist $c_3,\eps_1>0$ such that for every $\delta\in(0,\eps_1 b_2 ) $ and $y\in \prt_- M_1$,
\begin{align}\label{hitLpp}
    \P_y(\tau(L') < \tau(M_2) <\tau(R_2) < \sigma(\Om_1)) \geq c_3 \delta/b_2.
\end{align}

Estimates \eqref{bhpN1a}, \eqref{hitLpp} and the strong Markov property applied at $\tau(\prt_-M_1)$ and then again at $\tau (M_2)$ imply that for $x\in N_1$,
\begin{align}\label{hitR2}
 \P_x&(\sigma(\Om )  < \tau(R_2) < \sigma(\Om_1))\\
 &\geq \int_{\prt_-M} \P_y(\tau(L') < \tau(M_2) < \tau(R_2) < \sigma(\Om_1)) 
 \P_x(W_{\tau(\prt_-M_1)}\in dy, \tau(\prt_-M_1) <  \sigma(\Om) ) \notag  \\
 &\geq \int_{\prt_-M} c_3 (\delta/b_2)
 \P_x(W_{\tau(\prt_-M_1)}\in dy, \tau(\prt_-M_1) <  \sigma(\Om) ) \notag\\
 &= c_3 (\delta/b_2)  \P_x (\tau(\prt_-M_1) <  \sigma(\Om) )\notag\\
 &\geq  c_3 (\delta/b_2) c_2 \P_x(\tau(R_2)< \sigma(\Om_1)).\notag
\end{align}

\emph{Step 3: Estimates for
Radon-Nikodym derivatives}

Consider a set $Q\subset R_2$ such that $v\longmapsto \P_v(W_{\tau(R_2)}\in Q, \tau(R_2) < \sigma(\Om_1) )$
is strictly positive in $N_2$.
Then, for  $Q\subset R_2$, the functions $v\longmapsto \P_v(W_{\tau(R_2)}\in Q, \tau(R_2) < \sigma(\Om_1) )$
and $v\longmapsto \P_v( \tau(R_2) < \sigma(\Om_1) )$
are non-negative harmonic in $N_2$ and vanish on $\prt \Om_1$.
By Lemma \ref{lem:BHP} \textit{(ii)}, there is $c_5>0$ such that for $Q\subset R_2$ and all $v,y\in R_1$,
\begin{align*}
\frac{\P_v(W_{\tau(R_2)}\in Q, \tau(R_2) < \sigma(\Om_1) )}{\P_y(W_{\tau(R_2)}\in Q,\tau(R_2) < \sigma(\Om_1) )}
\leq c_5 \frac{\P_v( \tau(R_2) < \sigma(\Om_1) )}{\P_y(\tau(R_2) < \sigma(\Om_1) )}.
\end{align*}

Integrating both sides with respect to the measure $\P_x(W_{\tau(R_1)}\in dv, \tau(R_1) < \sigma(\Om_1) )$
for a fixed $x\in N_1$, we obtain for $y \in R_1$,
\begin{align*}
&\frac{\int_{R_1}\P_v(W_{\tau(R_2)}\in Q, \tau(R_2) < \sigma(\Om_1) )
\P_x(W_{\tau(R_1)}\in dv, \tau(R_1) < \sigma(\Om_1) )}{\P_y(W_{\tau(R_2)}\in Q,\tau(R_2) < \sigma(\Om_1) )}\\
&\qquad\leq c_5 \frac{\int_{R_1}\P_v( \tau(R_2) < \sigma(\Om_1) )
\P_x(W_{\tau(R_1)}\in dv, \tau(R_1) < \sigma(\Om_1) )}{\P_y(\tau(R_2) < \sigma(\Om_1) )},
\end{align*}
which is equivalent to the following inequality
\begin{align*}
\frac{\P_x(W_{\tau(R_2)}\in Q, \tau(R_2) < \sigma(\Om_1) )}{\P_y(W_{\tau(R_2)}\in Q,\tau(R_2) < \sigma(\Om_1) )}
&\leq c_5 \frac{\P_x( \tau(R_2) < \sigma(\Om_1) )}{\P_y(\tau(R_2) < \sigma(\Om_1) )},
\end{align*}
that can be written as follows
\begin{align*}
\frac{\P_y(W_{\tau(R_2)}\in Q,\tau(R_2) < \sigma(\Om_1) )}{\P_x(W_{\tau(R_2)}\in Q, \tau(R_2) < \sigma(\Om_1) )}
&\geq \frac1 {c_5} \frac{\P_y(\tau(R_2) < \sigma(\Om_1) )}{\P_x( \tau(R_2) < \sigma(\Om_1) )}.
\end{align*}

Now, we integrate both sides of the last inequality with respect to the measure
\begin{align*}
    \P_x(W_{\tau(R_1)}\in dy, \sigma(\Om) <\tau(R_1) < \sigma(\Om_1) )
\end{align*}
to see that
\begin{align*}
&\frac{\int_{R_1}\P_y(W_{\tau(R_2)}\in Q,\tau(R_2) < \sigma(\Om_1) )
\P_x(W_{\tau(R_1)}\in dy, \sigma(\Om) <\tau(R_1) < \sigma(\Om_1) )}
{\P_x(W_{\tau(R_2)}\in Q, \tau(R_2) < \sigma(\Om_1) )}\\
&\geq \frac{\int_{R_1}\P_y(\tau(R_2) < \sigma(\Om_1) )
\P_x(W_{\tau(R_1)}\in dy, \sigma(\Om) <\tau(R_1) < \sigma(\Om_1) )}
{c_5 \P_x( \tau(R_2) < \sigma(\Om_1) )}.
\end{align*}

The last formula is equivalent to 
\begin{align*}
&\frac{\P_x(W_{\tau(R_2)}\in Q, \sigma(\Om) <\tau(R_2) < \sigma(\Om_1) )}
{\P_x(W_{\tau(R_2)}\in Q, \tau(R_2) < \sigma(\Om_1) )}
\geq \frac{\P_x(\sigma(\Om) <\tau(R_2) < \sigma(\Om_1) )}
{c_5 \P_x( \tau(R_2) < \sigma(\Om_1) )}.
\end{align*}

Since $Q$ is an arbitrarily subset of $R_2$, we obtain the following estimate for the
Radon--Nikodym derivative, for $x\in N_1$ and $y\in R_2$,
\begin{align*}
&\frac{\P_x(W_{\tau(R_2)}\in dy, \sigma(\Om) <\tau(R_2) < \sigma(\Om_1) )}
{\P_x(W_{\tau(R_2)}\in dy, \tau(R_2) < \sigma(\Om_1) )}
\geq \frac{\P_x(\sigma(\Om) <\tau(R_2) < \sigma(\Om_1) )}
{c_5 \P_x( \tau(R_2) < \sigma(\Om_1) )}.
\end{align*}

We combine this formula with \eqref{hitR2} to obtain for $x\in N_1$ and $y\in R_2$,
\begin{align}\label{comp}
&\frac{\P_x(W_{\tau(R_2)}\in dy, \sigma(\Om) <\tau(R_2) < \sigma(\Om_1) )}
{\P_x(W_{\tau(R_2)}\in dy, \tau(R_2) < \sigma(\Om_1) )}\\
&\qquad\geq \frac{c_4 c_3 (\delta/b_2) c_2 \P_x(\tau(R_2)< \sigma(\Om_1))}
{c_5 \P_x( \tau(R_2) < \sigma(\Om_1) )}
= (c_2 c_3 c_4 /c_5)\delta/b_2 =: c_6 \delta/b_2.\notag
\end{align}

We fix a reference disc $B=\calB(z_*,r_*)\subset \Om\setminus \calS(8b_2)$, with $r_*>0$, which is
located at a positive distance from $\prt \Om \cup \calS(8b_2)$.
 
Hence, using \eqref{comp}, for $x\in N_1$,
\begin{align}\label{lowBhit}
    \P_x&( \sigma(\Om) < \tau(B) < \sigma(\Om_1))\\
    &= \int_{R_2}\P_y(\tau(B) < \sigma(\Om_1))\P_x(W_{\tau(R_2)} \in dy, \sigma(\Om) < \tau(R_2) < \sigma(\Om_1)) \notag \\
    &\geq  c_6 (\delta/b_2) \int_{R_2}\P_y(\tau(B) < \sigma(\Om_1))\P_x(W_{\tau(R_2)}\in dy, \tau(R_2) < \sigma(\Om_1) ) \notag \\
    &= c_6 (\delta/b_2) \P_x(\tau(B) < \sigma(\Om_1)). \notag 
\end{align}

\emph{Step 4: 
Combining the estimates.}

It is easy to see that $\E_y(\sigma(\Om_1)) > c_{7}$ for some $c_{7}>0$ and all $y\in B$. Thus, in view of \eqref{lowBhit}, for $x\in N_1$,
\begin{align*}
    \E_x((\sigma(\Om_1)- \sigma(\Om)) 1_{\{\sigma(\Om) < \tau(B) < \sigma(\Om_1)\}})&\ge \E_x((\sigma(\Om_1)- \tau(B)) 1_{\{\sigma(\Om) < \tau(B) < \sigma(\Om_1)\}})\\
    &\ge c_7 \P_x(\sigma(\Om) < \tau(B) < \sigma(\Om_1)) \notag\\
    &\geq c_{7}  c_6 (\delta/b_2) \P_x(\tau(B) < \sigma(\Om_1)) \notag
\end{align*}
and, therefore, for $c_{8}=c_7c_6$ and $x\in N_1$,
\begin{align*}
\E_x((\sigma(\Om_1)-\sigma(\Om)) 1_{\{ \sigma(\Om)< \sigma(\Om_1)\}})
    &\geq \E_x((\sigma(\Om_1)-\sigma(\Om)) 1_{\{\sigma(\Om) < \tau(B) < \sigma(\Om_1)\}})\\
    &\geq 
    c_{8} (\delta/b_2) \P_x(\tau(B) < \sigma(\Om_1))\\
    &\geq 
    c_8 (\delta/b_2) \P_x(\tau(B) < \sigma(\Om)).
\end{align*}
The last inequality in the above formula holds because $\Om\subset\Om_1$.

This and \eqref{maineq} yield for $x\in N_1$,
\begin{align}\label{eq:lifees}
    \E_x(\sigma(\Om_1) )  &= \E_x(\sigma(\Om) )
     + \E_x((\sigma(\Om_1)-\sigma(\Om)) 1_{\{ \sigma(\Om)< \sigma(\Om_1)\}})\\
     & \geq  \E_x(\sigma(\Om) ) + c_8 (\delta/b_2) \P_x(\tau(B) < \sigma(\Om)).\notag
\end{align}

We can apply Proposition \ref{Lem:hvsu} \textit{(ii)} to $\Om$ because it is a Lipschitz domain
with the Lipschitz constant strictly less than 1. This follows from the convexity
of $\Om$ and the $C^1$ regularity of the boundary of $\Om$ proved in Theorem \ref{main:thm} \textit{(ii)}. Hence, according to \eqref{eq:ulessh},
for some $c_9<\infty$ and $x\in N_1$,
\begin{align*}
  \P_x(\tau(B) < \sigma(\Om)) \geq c_9  \E_x(\sigma(\Om) ) .
\end{align*}

This and \eqref{eq:lifees} show that for  $x\in N_1$,
\begin{align}\label{eq:nv252}
     \E_x(\sigma(\Om_1) )  & \geq  \E_x(\sigma(\Om) ) + c_8  c_9 (\delta/b_2) \E_x(\sigma(\Om) )
     = (1+ c_8  c_9 (\delta/b_2)) \E_x(\sigma(\Om) ) .
\end{align}

This and \eqref{area2} imply that for $x\in N_1$,
\begin{align}\label{eq:compa}
     \frac{\E_x(\sigma(\Om_1) )}{|\Om_1|^{1/2}}  & \geq 
     \frac{1+ c_8  c_9 (\delta/b_2)}{(1+ c_1\delta b_2)^{1/2}}\cdot \frac{\E_x(\sigma(\Om) )}{|\Om|^{1/2}}.
\end{align}

Note that $c_1, c_8$ and $c_9$ do not depend on $b_2$ and $\delta$. 

We choose $b_2>0$ so small that for some $\delta_*>0$ and all $\delta\in(0,\delta_*)$,
\begin{align*}
    \frac{1+ c_8  c_9 (\delta/b_2)}{(1+ c_1\delta b_2)^{1/2}} >1.
\end{align*}

The above choice agrees with the order of quantifiers in Lemma \ref{lem:dilat}.
We fix some $b_2,\delta>0$ as above so that for some $\eps>0$,
\begin{align*}
    \frac{1+ c_8  c_9 (\delta/b_2)}{(1+ c_1\delta b_2)^{1/2}} = 1+\eps,
\end{align*}
and, in view of \eqref{eq:compa}, for $x\in N_1$,
\begin{align}\label{eq:comp}
\frac{u_{\Om_1}(x)}{|\Om_1|^{1/2}}
     =\frac{\E_x(\sigma(\Om_1) )}{|\Om_1|^{1/2}}  & \geq 
     (1+\eps) \frac{\E_x(\sigma(\Om) )}{|\Om|^{1/2}}
     =(1+\eps)\frac{u_{\Om}(x)}{|\Om|^{1/2}} .
\end{align}

Recall that we assumed that for every $n>0$ there exists
a neighborhood $U_n\subset \prt \Om$ of $0$ such that the harmonic measure of 
 $\{y\in U_n: |\nabla u_\Om(y)|_{\mathrm{mf}} /|\Om|^{1/2}> J(\Om)-1/n\}$ is strictly positive.
In view of Lemma \ref{lem:supbound} (a) \textit{(iv)} and \eqref{eq:comp}, 
for every $n>0$ there exists
a neighborhood $U_n'\subset \prt \Om_1$ of $0$ such that the harmonic measure of 
 $\{y\in U_n': |\nabla u_{\Om_1}(y)|_{\mathrm{mf}} /|\Om_1|^{1/2}> (1+\eps)(J(\Om)-1/n)\}$ is strictly positive.
This and Lemma \ref{lem:supbound} (a)  imply 
that $J(\Om_1) \geq (1+\eps) J(\Om)$. 
We proved \eqref{eq:contr}.
\end{proof}

For the notation used in the following lemma and its proof, see the proof of Theorem \ref{th:line}.
We will now point out where the definitions of the quantities used in the statement of the lemma can be found. For the definitions of $R_2,M_1, M_2$ and $\prt_-M_1$, see \eqref{lm1}-\eqref{lm4}. Note that $b_2$ is a parameter in the definitions of $M_1$ and  $M_2$.
See Fig. \ref{fig3} for the illustration of $L'$ and the accompanying text for the formal definition. The domain $\Om_1$ is defined above \eqref{area2}. The definition of $\Om_1$ depends on a parameter $\delta$ (among other things).

\begin{lemma}\label{lem:dilat}

There exist $\eps_*,c_1, \eps_1>0$ such that for all $b_2\in(0,\eps_*)$, $\delta\in (0,\eps_1 b_2)$ and $x\in \prt_-M_1$,
    \begin{align*}
        \P_x(\tau(L') < \tau(M_2) <\tau(R_2) < \sigma(\Om_1)) \geq c_1 \delta/b_2.
    \end{align*}
\end{lemma}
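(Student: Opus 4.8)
The plan is to estimate the probability of a concatenation of three excursion-type events and multiply the corresponding lower bounds using the strong Markov property. The geometry is set up so that $M_1 = \calB((7b_2/8, b_2/4), b_2/16)$ sits just above the lens $\Om'$ near its right end, $\prt_- M_1$ is the lower half of $\prt M_1$ (facing $\Om'$), $M_2 = \calB((5b_2/8, b_2/4), b_2/16)$ sits near the left end of $\Om'$, and $R_2 = \prt\calS(7b_2)\cap\Om$ is far from the lens, at a macroscopic distance. Since everything scales, I would normalize by $b_2$ and work with quantities depending only on the limiting shapes and on the ratio $\delta/b_2$.

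\textbf{Step 1: From $\prt_-M_1$, hit $L'$ (i.e., enter $\Om'$ through its upper boundary, which is the arc $L'$ of $\prt\Om$) before leaving $\Om_1$.} Because $\Om'$ is a thin lens of width comparable to $\delta$ sitting right under $\prt_-M_1$, and $\prt_-M_1$ is at distance comparable to $b_2$ from $L'$, a Brownian motion started on $\prt_-M_1$ has probability bounded below by a constant (independent of $b_2,\delta$, after scaling) of reaching a neighborhood of $L'$ inside $\Om$, and then crossing $L'$. Here I would use the boundary Harnack principle / harmonic measure estimate in the spirit of \eqref{ag20.7}: the probability that a Brownian motion started at height $\sim\delta$ above $\prt\Om$ (just inside $\Om$) exits through a window of $\prt\Om$ of length $\sim b_2$ before travelling distance $\sim b_2$ is $\gtrsim \delta/b_2$. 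More carefully: start on $\prt_-M_1$, reach with constant probability a point in $\Om$ at distance $\sim\delta$ below $\prt\Om$ near the right end of $L'$ (the lens is thin, so this is where the lens is; there is a subtlety that I must reach the part of $\Om$ that is \emph{inside} the lens region $b_1 < x_1 < b_2$), then from there the probability of crossing $L'$ (i.e. of $\tau(L')<\sigma(\Om_1)$, noting that crossing $\prt\Om$ here means entering $\Om'\subset\Om_1$, not exiting $\Om_1$) before escaping to distance $b_2$ is $\gtrsim \delta/b_2$ by the scaled half-disc/strip estimate. This produces a factor $c\,\delta/b_2$.

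\textbf{Step 2: Once inside $\Om'$ having just crossed $L'$, travel to $M_2$ before hitting $R_2$ and before $\sigma(\Om_1)$.} Inside the lens, Brownian motion stays close to $L'$; the lens connects the right end (under $M_1$) to the left end (under $M_2$) and $M_2\cap\Om_1\ne\emptyset$. Since $L'$, $M_1$, $M_2$ are all within $\calS(3b_2)$ and $R_2 = \prt\calS(7b_2)\cap\Om$, with constant probability the motion, once in $\Om'$ near the right end, will reach $M_2$ before leaving $\Om_1$ or hitting $R_2$ — this is a Harnack-chain argument along the lens together with the fact that $\Om_1\supset\Om$ so there is ``room'' near $M_2$. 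This gives a factor $c$, independent of $b_2,\delta$.

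\textbf{Step 3: From $M_2$, hit $R_2$ before leaving $\Om_1$.} $M_2$ is at macroscopic distance $\sim b_2$ from $R_2$ inside $\Om_1\supset\Om$, so by a Harnack chain this probability is $\geq c>0$, uniformly. Multiplying the three factors via the strong Markov property applied at $\tau(L')$ and $\tau(M_2)$ gives $\P_x(\tau(L')<\tau(M_2)<\tau(R_2)<\sigma(\Om_1)) \geq c_1\delta/b_2$, as claimed; the condition $\delta<\eps_1 b_2$ guarantees the lens really is ``thin'' relative to $b_2$ so that the scaling in Step 1 is valid and the estimate $\delta/b_2<1$ makes sense. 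The main obstacle is Step 1: I must carefully justify that from $\prt_-M_1$ the motion enters $\Om'$ through $L'$ with probability $\gtrsim\delta/b_2$ and not merely that it comes within distance $\delta$ of $\prt\Om$ somewhere — the point is that the part of $\prt\Om$ over which $\Om'$ has been attached has length $\sim b_2$ (between $b_1$ and $b_2$), so the standard ``escape through a window'' boundary-Harnack estimate applies with the right length scale; the construction of $\Om'$ via shifting $\calT_\delta(\Om)$ by $\delta$ and the slope bound $f(a)<a/16$ (hence $\prt\Om\cap\calS(8b_2)$ nearly flat) are exactly what make this clean.
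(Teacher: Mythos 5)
Your plan is a direct three-step strong Markov decomposition (at $\tau(L')$ and then at $\tau(M_2)$), which is a genuinely different route from the paper's: the paper instead compares the hitting probability of $M_2$ in the nested domains $D\subset D_\delta$ via a shift-then-mirror coupling of two Brownian motions, and deduces the lemma from $\P_x(\tau(M_2)<\sigma(D_\delta))-\P_x(\tau(M_2)<\sigma(D))\geq c\,\delta/b_2$. Your decomposition could in principle be made to work, but you have the sources of the constant factor and the $\delta/b_2$ factor swapped between Steps 1 and 2, and your Step 2, as stated, is false.

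In Step 1, the event $\{\tau(L')<\sigma(\Om_1)\}$ from $x\in\prt_-M_1$ has probability bounded below by a \emph{constant} (uniformly in $b_2,\delta$), not merely $\gtrsim\delta/b_2$: $\prt_-M_1$ lies at height comparable to $b_2$ directly over the arc $L'$, whose $x_1$-range $[b_1,b_2]$ contains that of $M_1$, so the harmonic measure of $L'$ seen from $\prt_-M_1$ in $\Om$ is bounded below. The ``exit through a window of length $\sim b_2$ from height $\sim\delta$'' estimate of order $\delta/b_2$ that you invoke pertains to a window at \emph{lateral} distance $\sim b_2$, not one directly underneath a point at macroscopic vertical height. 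Conversely, your Step 2 claim that $\P_y(\tau(M_2)<\sigma(\Om_1))\gtrsim 1$ for $y\in L'$ is wrong, and this is where the real $\delta/b_2$ lives. A point $y\in L'\subset\prt\Om$ is at distance $\sim\delta$ from $\prt\Om_1$, because the bottom of the lens, $\calT_\delta(L')\subset\prt\Om_1$, is only $\delta$ below $L'$. Since $M_2\cap\Om'=\emptyset$ and $M_2$ sits at height $\sim b_2/4$, any path from $y$ to $M_2$ must first escape to macroscopic distance from $\prt\Om_1$, which has probability $\sim\delta/b_2$, not $\gtrsim 1$. The ``Harnack chain along the lens'' does not rescue this: the lens has thickness $\delta$, so any chain inside it sits at distance $\lesssim\delta$ from $\prt\Om_1$ and the corresponding constants degenerate as $\delta/b_2\to 0$. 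Your final bound $c_1\delta/b_2$ only comes out right because the constant and the $\delta/b_2$ happen to be swapped between the two steps. With the corrected assignment (Step 1 $\gtrsim$ const., Step 2 $\gtrsim\delta/b_2$, Step 3 $\gtrsim$ const.) the decomposition becomes a viable strategy, but the corrected Step 2 would then have to be proven uniformly over $y\in L'$ (including near the endpoints $z_1,z_2$ where the lens transitions into the triangular caps), which is precisely the delicate lower bound the paper's coupling argument is designed to avoid.
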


\begin{proof}

{\it Step 1: Definitions and notation}.

Recall from \eqref{shiftT} the shift $\calT_\delta((x_1,x_2))  = (x_1,x_2-\delta)$ and $ x_\delta = \calT_\delta(x)$.
    For $0<b_1< b_2/2$ and $\delta>0$, let 
    \begin{align*}
    D&=\{ (x_1,x_2)\in \Om: x_1\in(b_1,b_2), x_2 < b_2   \},\\
        M_1^\delta &=  \calT_\delta( M_1),\qquad
        M_2^\delta =  \calT_\delta( M_2),\\
        D^*_\delta &=  \calT_\delta( D),\qquad
        \ \ D_\delta = D \cup D^*_\delta.
    \end{align*}
    Let $K^\delta$ be the middle third of the upper edge of $D^*_\delta$, i.e.,
\begin{align*}
    K^\delta = \{ (x_1, b_2-\delta) : b_1+(b_2-b_1)/3<x<b_2-(b_2-b_1)/3\}.
\end{align*}

Let $L^\delta =\{(y_1,b_2/4-\delta/2):y_1\in \R\}$ and $\calA^\delta$ denote the symmetry with respect to $L^\delta$.
Let $L'_- = \calT_\delta(L')$
and $L'_+ = \calA^\delta(L'_-)$.
The discs $M_1$ and $M_1^\delta$ are symmetric with respect to $L^\delta$ and the same is true for $M_2$ and $M_2^\delta$.
We define lower parts of the boundaries of the discs $M_1$ and $M_1^\delta$ as follows (cf. \eqref{lm3}),
\begin{align*}
    \prt_-M_1 &= \{(y_1,y_2)\in \prt M_1: y_2 < b_2/4\},\\
    \prt_-M_1^\delta &= \{(y_1,y_2)\in \prt M_1^\delta: y_2 < b_2/4-\delta\}.
\end{align*}
See Fig. \ref{fig18}.
    
Since $\prt \Om$ has no corners, in particular at the origin, we can choose $\eps_*>0$ sufficiently small so that for $b_2\in(0,\eps_*)$,
\begin{align}\label{eq:include}
    (b_1,b_2)\times (b_2/16, b_2) \subset D \subset (b_1,b_2) \times (0,b_2).
\end{align}

\begin{figure}[h!]
\includegraphics[width=0.7\linewidth]{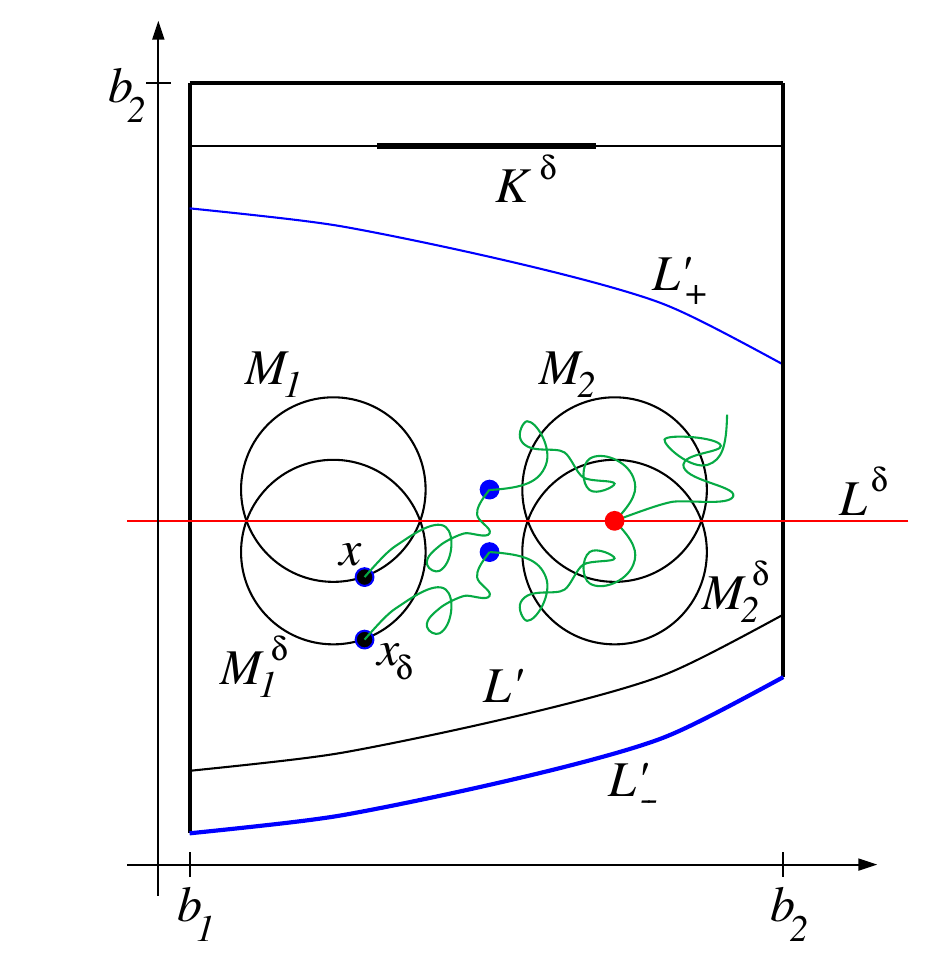}
\caption{The Brownian motions $W_t$ and $V_t$ are represented by green curves. 
Blue dots represent $W_{T_*}$ and $V_{T_*}$. The red dot 
represents $W_{T_1}=V_{T_1}$.
$V_t$ is a shift of $W_t$ until time $T_*$. $V_t$ is a mirror image of $W_t$ with respect to the line $L^\delta$
for $T_*\leq t \leq T_1$. The two processes are identical after time $T_1$.
The boundary of $D_\delta$ is represented by thick lines. }
\label{fig18}
\end{figure}

{\it Step 2}. We will prove that for some $c_2>0$ and all $x\in M_1$,
\begin{align}\label{enlarge}
    \P_{x_\delta}(\tau(M_2^\delta) < \sigma(D_\delta)) 
    \geq  \P_{x}(\tau(M_2) < \sigma(D))  +c_2 \delta /b_2.
\end{align}

By shift invariance, for $x\in M_1$,
\begin{align}\label{eq:shiftinv}
    \P_{x_\delta}(\tau(M_2^\delta) < \sigma(D^*_\delta))
    = \P_{x}(\tau(M_2) < \sigma(D)).
\end{align}

It follows from \eqref{eq:include} 
and scaling 
that there exists $c_3>0$ such that for all  $x\in M_1^\delta$,
\begin{align}\label{j17.1}
    \P_x(\tau(K^\delta) < \tau(M_2^\delta \cup (\prt D^*_\delta \setminus K^\delta) )) \geq c_3.
\end{align}

It follows from \eqref{eq:include}, scaling and the boundary Harnack principle
that there exist $c_4,\eta>0$ such that for all $\delta\in(0,\eta)$ and $y \in K^\delta$,
$$\P_y(\tau(M_2^\delta) < \sigma(D_\delta)) \geq c_4 \delta /b_2.$$
This estimate, \eqref{j17.1} and the strong Markov property applied at $\tau(K^\delta)$ imply that
there exists $c_2>0$ such that for all  $x\in M_1^\delta$,
\begin{align*}
    \P_x(\tau(K^\delta) < \tau(M_2^\delta) < \sigma(D_\delta)) \geq c_2 \delta /b_2.
\end{align*}
This and \eqref{eq:shiftinv} show that for $x\in M_1$,
   \begin{align*}
   \P_{x_\delta}(\tau(M_2^\delta) < \sigma(D_\delta)) 
   &\geq \P_{x_\delta}(\tau(M_2^\delta) < \sigma(D^*_\delta)) 
       +  \P_{x_\delta}(\tau(K^\delta) < \tau(M_2^\delta) < \sigma(D_\delta))  \\
    &\geq \P_{x_\delta}(\tau(M_2^\delta) < \sigma(D^*_\delta))  +c_2 \delta /b_2\\
    & =\P_{x}(\tau(M_2) < \sigma(D))  +c_2 \delta /b_2.
    \end{align*}
We have proved \eqref{enlarge}.

{\it Step 3}. We will prove that for all $x\in \prt_- M_1$,
\begin{align}\label{eq:weakinold}
    \P_{x}(\tau(M_2) < \sigma(D_\delta)) 
    \geq \P_{x_\delta}(\tau(M_2^\delta) < \sigma(D_\delta)) .
\end{align}
    
Let $W_t= (W^1_t,W^2_t)$ be a Brownian motion starting from $W_0=x\in \prt_-M_1$
and 
\begin{align*}
T_*&=  \inf\{t\geq 0: W^2_t = b_2/4   \text{  or  } W_t\notin D_\delta\},\\
T_1&=\inf\{t\geq T_*: W_t \in L^\delta \text{  or  } W_t\notin D_\delta\}  ,\\
    V_t &= 
    \begin{cases}
    \calT_\delta(W_t), & \text{  for  } 0\leq t \leq T_*,\\
    \calA^\delta(W_t), & \text{  for  } T_*< t\leq T_1,\\
    W_t , & \text{  for  }t> T_1.
    \end{cases}
\end{align*}

The process $V_t$ is a Brownian motion. The pair $(W,V)$ is a ``coupling''.
Before $T_*$, the two processes form a ``synchronous'' coupling. The process $V$
is  a shift of $W$ on the interval $[0,T_*]$ so $V_0 = x_\delta$. Between $T_*$ and $T_1$, the processes form
a ``mirror'' coupling with respect to the line $L^\delta$. In other words,  $V_t$ and $W_t$
are symmetric with respect to $L^\delta$ so $W_{T_1}=V_{T_1}$. Finally, after $T_1$, the processes are ``coupled,''
i.e., they are identical. See Fig. \ref{fig18}.

We will use subscripts as in $\tau_W$ to indicate which of the processes we are referring to.
In an alternative notation, Inequality \eqref{eq:weakinold}, that we want to prove in this step, can be stated as
\begin{align}\label{eq:weakin}
    \P(\tau_W(M_2) < \sigma(D_\delta)) 
    \geq \P(\tau_V(M_2^\delta) < \sigma(D_\delta)).
\end{align}

In the argument given below, the processes $W$ and $V$ will be killed upon exiting $D_\delta$.
By abuse of notation, we will call killed processes $W$ and $V$.

Before $T_*$, 
if $V$ hits $M_2^\delta$ before exiting $D_\delta$
then $W$ hits $M_2$ (at the same time) before exiting $ D_\delta$. Hence,
\begin{align}\label{beforeT*}
    \P(\tau_W(M_2) <  T_*)
    \geq \P(\tau_V(M_2^\delta) <  T_*). 
\end{align}

Note that $L'_+\subset D$ (except for the endpoints), so if
$V$ hits $L'_-$ at a time $t\in[T_*,T_1]$, $W$ hits $L'_+$ a the same time and, therefore, it is in the interior of $D$. Thus, in view of the symmetry
of $W_t$ and $V_t$ for $t\in[T_*,T_1]$, 
\begin{align}\label{indini}
    \P (T_*\leq \tau_W(M_2) < T_1 )
    \geq \P(T_*\leq \tau_V(M_2^\delta) < T_1).
\end{align}

For $k\geq 1$, let
\begin{align*}
    S_k&= \inf\{t\geq T_k: W_t \in L'_-\cup L'_+ \cup \prt D_\delta\}= \inf\{t\geq T_k: V_t \in L'_-\cup L'_+ \cup \prt D_\delta \},\\
    T_{k+1}&= \inf\{t\geq S_{k}: W_t \in L^\delta \cup \prt D_\delta\}.
\end{align*}

Recall that $W_t=V_t$ for $t\geq T_1$.
If $T_1 < \tau_V(M_2^\delta) \land \sigma_V(D_\delta)$, then,
by symmetry, 
\begin{align}\label{indT}
    \P (T_1\leq \tau_W(M_2) < S_1 )
    = \P (T_1\leq \tau_V(M_2^\delta) < S_1 ).
\end{align}

If $W(S_1)\in L'_-$, then, the processes $W$ and $V$ are killed at 
the time $S_1$ and, therefore, they will not hit $M_2\cup M_2^\delta$
after $S_1$.

If $W(S_1)\in L'_+$, then, the following events are possible for $\{W_t, t\in(S_1, T_2)\}$:
\begin{itemize}
    \item The trajectory does not hit $M_2\cup M_2^\delta$.
    \item The trajectory hits both $M_2$ and $ M_2^\delta$.
    \item The trajectory hits $M_2$ but does not hit $M_2^\delta$.
\end{itemize}

It is impossible for $\{W_t, t\in(S_1, T_2)\}$ to hit $M_2^\delta$ but not $M_2$
because $M_2^\delta \setminus M_2$ lies totally below $L^\delta$. Hence, 
\begin{align}\label{indS}
    \P(S_1\leq \tau_W(M_2) < T_2 )
    \geq \P(S_1\leq \tau_V(M_2^\delta) < T_2 ).
\end{align}

At time $T_2$, the process is at $L^\delta$, unless it has been killed.
Hence, we can apply induction and obtain the following analogs of \eqref{indT}
 and \eqref{indS} for $k\geq 2$,
 \begin{align*}
    \P (T_k\leq \tau_W(M_2) < S_k )
    &= \P (T_k\leq \tau_V(M_2^\delta) < S_k ),\\
     \P(S_k\leq \tau_W(M_2) < T_{k+1} )
    &\geq \P(S_k\leq \tau_V(M_2^\delta) < T_{k+1} ).
\end{align*}

This, \eqref{beforeT*}, \eqref{indini}, \eqref{indT} and \eqref{indS} imply that for $W_0=x\in \prt_- M_1$,
\begin{align*}
    \P&(\tau_W(M_2) < \sigma(D_\delta)) \\
    &= \P\Bigg (
    \{\tau_W(M_2) < T_*\} \cup
    \{T_* \leq \tau_W(M_2) < T_1\} \\
    &\qquad \cup \bigcup_{k\geq 1}
    (\{ T_k< \tau_W(M_2) < S_k \}
    \cup \{S_k < \tau_W(M_2) < T_{k+1} \} )
    \Bigg)\\
     &\geq \P\Bigg (
    \{\tau_V(M_2) < T_*\} \cup
    \{T_* \leq \tau_V(M_2) < T_1\} \\
    &\qquad \cup \bigcup_{k\geq 1}
    (\{ T_k< \tau_V(M_2) < S_k \}
    \cup \{S_k < \tau_V(M_2) < T_{k+1} \} )
    \Bigg)\\
    &= \P(\tau_V(M_2^\delta) < \sigma(D_\delta)) .
\end{align*}

This proves \eqref{eq:weakin} and, therefore, \eqref{eq:weakinold}. 

{\it Step 4}.
Combined with \eqref{enlarge}, \eqref{eq:weakinold} yields
for $x\in \prt_- M_1$,
  \begin{align*}
  \P_{x}(\tau(M_2) < \sigma(D_\delta)) \geq
   \P_{x_\delta}(\tau(M_2^\delta) < \sigma(D_\delta)) 
   \geq \P_{x}(\tau(M_2) < \sigma(D)) +c_2 \delta /b_2.
    \end{align*}
    
Therefore, we deduce that for $x\in\prt_- M_1$,
 $$
 \P_x(\tau(L') < \tau(M_2) < \sigma(\Om_1)) \geq  \P_{x}(\tau(M_2) < \sigma(D_\delta)) - \P_{x}(\tau(M_2) < \sigma(D)) \geq c_2 \delta/b_2.  
 $$

It is easy to see that for some $c_6>0$ and all $y\in M_2$, $\P_y( \tau(R_2) < \sigma(\Om_1)) > c_6$. Hence, for $c_7=c_2 c_6$ and all $x\in\prt_- M_1$,
   \begin{align*}
  \P_x&(\tau(L') < \tau(M_2) < \tau(R_2) < \sigma(\Om_1))\\
   &= \int_{y\in \partial M_2} \P_y(\tau(R_2) < \sigma(\Om_1))\cdot \P_x(W_{\tau(M_2)}\in dy, \tau(L') < \tau(M_2)) \geq c_7 \delta/b_2.
    \end{align*}
    
The proof is complete.

\end{proof}

\section{The perimeter constraint}\label{sec:perim}

\begin{proof}[Proof of Theorem \ref{th:perimeter}]

The proofs of all parts of the theorem are essentially the same as those of Theorem \ref{main:thm}. We will present only the modifications needed to replace normalization by area with normalization by perimeter.

    \textit{(i)}  Suppose that  $\Om_n$ is a sequence of convex sets with analytic boundaries such that the gradient $\nabla u_{\Om_n}$ is an analytic function on the closure of $\Omega_n$,
$P(\Om_n)=1 $, $J_P(\Om_n) > J_P^{\max} - 1/n$,  $\Omega_n \subset \{(x,y)\,:\, y\ge 0\}$ and $|\nabla u_{\Om_n}|$  attains its maximum at $0\in \partial \Omega_n$, for every $n$.
To apply the Blaschke selection theorem (see Remark \ref{oldres} \textit{(iii)}), we need to show that $|\Om_n|$ does not go to 0. If this were the case, then by \eqref{ineq:04_12}, $J_P(\Om_n)$ would tend to $0$, contradicting the fact that $\Omega_n$ is a maximizing sequence for $J_P$. 
    
    \textit{(ii)} 
One can prove that a maximizer domain does not have a corner where the gradient attains the maximum, just like in Step 2 of the proof of   Theorem \ref{main:thm} \textit{(i)}.
    
Similarly to the proof of   Theorem \ref{main:thm} \textit{(ii)}, assume that an optimizer $\Omega$, with $P(\Om)=1$, contains a corner of angle $\alpha<\pi$ on its boundary (not the origin, where the gradient of the torsion function is assumed to attain its maximum). Recall the definition of $\wt{\Omega}_\eps$ from that proof. It is a set obtained by removing a curvilinear cap with area $\eps$ (labeled $ABB'$ in Fig. \ref{fig:perimeter}). 

We will estimate the perimeter of $\wt \Omega_\eps$. Choose $\theta \in (0,\alpha/2)$ (see Fig. \ref{fig:perimeter}) sufficiently close to $\alpha/2$ so that
    $$\frac{1}{\cos\theta}-\tan\frac{\alpha}{2} \ge \frac{1-\sin\frac{\alpha}{2}}{2\cos\frac{\alpha}{2}}>0.$$
    
    The area of the triangle $ACC'$, equal to $\tan(\alpha/2)\rho_\eps^2$, is larger than the area of the cap $ABB'$, equal to $\eps$. Thus, 
    $$\rho_\eps \ge \frac{1}{\sqrt{\tan(\alpha/2)}}\cdot \sqrt{\eps}.$$

    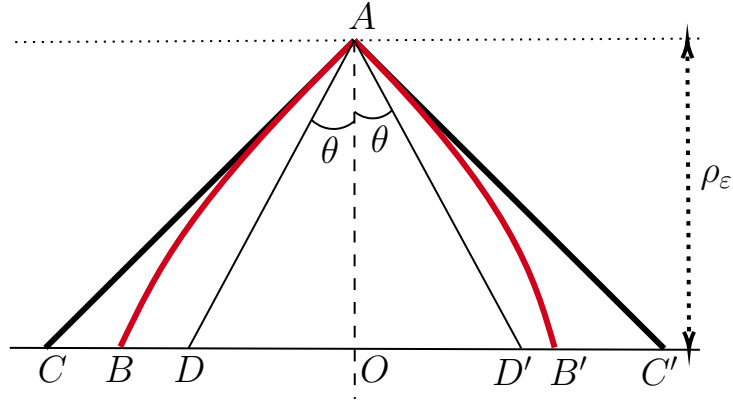
\begin{figure}[h!]
    \centering
\tikzset{every picture/.style={line width=0.75pt}} 

\begin{tikzpicture}[x=0.75pt,y=0.75pt,yscale=-.6,xscale=.6]

\draw    (30,339) -- (610.08,340.62) ;
\draw  [dash pattern={on 4.5pt off 4.5pt}]  (319.61,80.41) -- (320,382) ;
\draw [line width=2.25]    (319.61,80.41) -- (60,339) ;
\draw [line width=2.25]    (319.61,80.41) -- (580.08,339.62) ;
\draw [color={rgb, 255:red, 208; green, 2; blue, 27 }  ,draw opacity=1 ][line width=2.25]    (123,338) .. controls (155,272) and (177,222) .. (319.61,80.41) ;
\draw [color={rgb, 255:red, 208; green, 2; blue, 27 }  ,draw opacity=1 ][line width=2.25]    (319.61,80.41) .. controls (452.65,213.94) and (471,279.72) .. (488,338.72) ;
\draw  [draw opacity=0] (319.89,148.06) .. controls (315.95,151.63) and (310.94,154.04) .. (305.3,154.7) .. controls (297.16,155.65) and (289.43,152.74) .. (283.88,147.4) -- (302.14,127.61) -- cycle ; \draw   (319.89,148.06) .. controls (315.95,151.63) and (310.94,154.04) .. (305.3,154.7) .. controls (297.16,155.65) and (289.43,152.74) .. (283.88,147.4) ;  
\draw  [draw opacity=0] (351.22,139.28) .. controls (348.15,142.34) and (344.18,144.57) .. (339.61,145.5) .. controls (332.35,146.99) and (325.19,144.88) .. (319.98,140.4) -- (335,122.99) -- cycle ; \draw   (351.22,139.28) .. controls (348.15,142.34) and (344.18,144.57) .. (339.61,145.5) .. controls (332.35,146.99) and (325.19,144.88) .. (319.98,140.4) ;  
\draw    (319.61,80.41) -- (180.12,339.53) ;
\draw    (319.61,80.41) -- (459.94,339.37) ;
\draw [line width=1.5]  [dash pattern={on 1.69pt off 2.76pt}]  (599.59,84.08) -- (600.98,336.72) ;
\draw [shift={(601,339.72)}, rotate = 269.68] [color={rgb, 255:red, 0; green, 0; blue, 0 }  ][line width=1.5]    (14.21,-4.28) .. controls (9.04,-1.82) and (4.3,-0.39) .. (0,0) .. controls (4.3,0.39) and (9.04,1.82) .. (14.21,4.28)   ;
\draw [shift={(599.57,81.08)}, rotate = 89.68] [color={rgb, 255:red, 0; green, 0; blue, 0 }  ][line width=1.5]    (14.21,-4.28) .. controls (9.04,-1.82) and (4.3,-0.39) .. (0,0) .. controls (4.3,0.39) and (9.04,1.82) .. (14.21,4.28)   ;
\draw  [dash pattern={on 0.84pt off 2.51pt}]  (609.65,78.82) -- (30.65,81.82) ;

\draw (289,158.4) node [anchor=north west][inner sep=0.75pt]  [font=\large]  {$\theta $};
\draw (331,149.4) node [anchor=north west][inner sep=0.75pt]  [font=\large]  {$\theta $};
\draw (322.04,343.21) node [anchor=north west][inner sep=0.75pt]  [font=\large]  {$O$};
\draw (312,46) node [anchor=north west][inner sep=0.75pt]  [font=\large]  {$A$};
\draw (107,343.4) node [anchor=north west][inner sep=0.75pt]  [font=\large]  {$B$};
\draw (481,343.4) node [anchor=north west][inner sep=0.75pt]  [font=\large]  {$B'$};
\draw (51,343.4) node [anchor=north west][inner sep=0.75pt]  [font=\large]  {$C$};
\draw (558,343.4) node [anchor=north west][inner sep=0.75pt]  [font=\large]  {$C'$};
\draw (165,343.4) node [anchor=north west][inner sep=0.75pt]  [font=\large]  {$D$};
\draw (433,343.4) node [anchor=north west][inner sep=0.75pt]  [font=\large]  {$D'$};
\draw (608,183.4) node [anchor=north west][inner sep=0.75pt]  [font=\large]  {$\rho _{\varepsilon }$};

\end{tikzpicture} 
    \caption{The red curves are parts of $\prt \Om$. The curvilinear cap $ABB'$ has area $\eps$.}
    \label{fig:perimeter}
\end{figure}

    Therefore, for sufficiently small $\eps>0$,
    \begin{align}\notag
    1-P(\wt \Omega_\eps) &= P(\Omega)-P(\wt \Omega_\eps) = \wt{AB}+\wt{AB'}-OB-OB' \ge 2(AD - OC)\\
    &= 2\left(\frac{1}{\cos\theta}-\tan\frac{\alpha}{2}\right)\rho_\eps
    \ge \frac{1-\sin\frac{\alpha}{2}}{\cos\frac{\alpha}{2}}\rho_\eps
    \ge \frac{1-\sin\frac{\alpha}{2}}{\sqrt{\cos\frac{\alpha}{2}\sin\frac{\alpha}{2}}}\cdot\sqrt{\eps}=:K_\alpha\sqrt{\eps}.\label{nv25.1}
\end{align}

The following is a modified ending of the proof of Theorem \ref{main:thm} \textit{(ii)}.
Lemma \ref{lem:supbound} shows that we can find $z_n\in \prt_r\wt\Om_\eps$ such that $\lim_{n\to\infty} z_n = 0$, and 
\begin{align*}
    \lim_{\delta\to 0} \frac 1 \delta \E_{z_n + \delta \normal(z_n)} (\sigma(\wt\Om_\eps))&=|\nabla u_{\wt\Om_\eps}(z_n)|_{\mathrm{mf}},\\
    \lim_{\delta\to 0} \frac 1 \delta \E_{z_n + \delta \normal(z_n)} (\sigma(\Om))&\geq J_P(\Om)-1/n.
\end{align*}
This and \eqref{eq:afin} imply that
\begin{align*}
    |\nabla u_{\wt\Om_\eps}(z_n)|_{\mathrm{mf}}
    &=\lim_{\delta\to 0} \frac 1 \delta \E_{z_n + \delta \normal(z_n)} (\sigma(\wt\Om_\eps))
    \geq (1-  (c_{9}/c_{11})\eps^\beta) (J_P(\Om)-1/n).
\end{align*}
Since $\eps>0$ is fixed,  for some $n$,
\begin{align*}
     |\nabla u_{\wt\Om_\eps}(z_n)|_{\mathrm{mf}}
    \geq (1-  2(c_{9}/c_{11})\eps^\beta) J_P(\Om),
\end{align*}
and, in view of \eqref{nv25.1},
\begin{align*}
    \|\nabla u_{\wt \Om_\eps}\|_\infty &\geq (1-  2(c_{9}/c_{11})\eps^\beta) J_P(\Om),\\
    J_P(\wt \Om_\eps)&=\frac{\|\nabla u_{\wt \Om_\eps}\|_\infty}{P(\wt \Om_\eps)} \geq \frac{(1-  2(c_{9}/c_{11})\eps^\beta) }{P(\wt \Om_\eps)}J_P(\Om)
    \geq \frac{(1-  2(c_{9}/c_{11})\eps^\beta) }{1-K_\alpha\sqrt{\eps}}J_P(\Om).
\end{align*}
Recall from \eqref{eq:ommintau} that $\beta>1$. Thus, for small $\eps>0$, $J_P(\wt \Om_\eps) > J_P(\Om)$, contradicting the assumption that  $\Om$ is an optimizer
in the sense of \eqref{defPOptimizer}.

\textit{(iii)} 
Recall a maximizer $\Om$ and $\Om_1$ from the proof of Theorem \ref{main:thm} \textit{(iii)}. We assume in the present proof that $P(\Om)=1$. We have proved in \eqref{eq:nv252} that for  $x\in N_1$,
\begin{align}\label{new820}
    \E_x(\sigma(\Om_1) )  & \geq   (1+ c_8  c_9 (\delta/b_2)) \E_x(\sigma(\Om) ) .
\end{align}

For the perimeters, we have (see Fig. \ref{fig3}),
\begin{align}\label{nv26.2}
    P(\Omega_1)- P(\Omega) &= z_6z_3+\wt{z_3z_4}+z_4z_5-\wt{z_6z_1}-\wt{z_1z_2}-\wt{z_2z_5}\\
    &= z_6z_3+z_4z_5-\wt{z_6z_1}-\wt{z_2z_5}\notag \\
    &\leq (\wt{z_6z_1}+\delta) + (\wt{z_2z_5}+\delta) - \wt{z_6z_1}-\wt{z_2z_5}
    = 2\delta.\notag
\end{align}
Therefore,
\begin{align}\label{nv26.1}
    P(\Omega_1)\leq P(\Omega) + 2\delta= 1+ 2\delta.
\end{align}
We choose $b_2>0$ sufficiently small so that for some $\delta,\eps>0$,
$$ \frac{1+c_8c_9(\delta/b_2)}{1+2\delta}>1+\eps.$$
This and \eqref{new820}-\eqref{nv26.1} yield,
\begin{align}\label{eq:compnv}
\frac{u_{\Om_1}(x)}{P(\Omega_1)}
     =\frac{\E_x(\sigma(\Om_1) )}{P(\Omega_1)}  
     &\geq     \frac{ (1+ c_8  c_9 (\delta/b_2))\E_x(\sigma(\Om) )}{( 1+ 2\delta)P(\Omega)}\\
     &\geq     (1+\eps) \frac{\E_x(\sigma(\Om) )}{P(\Omega)}
     =(1+\eps)\frac{u_{\Om}(x)}{P(\Omega)} .\notag
\end{align}

We now repeat the end of the proof of Theorem \ref{main:thm} \textit{(iii)}, with minor modifications.
We assumed that for every $n>0$ there exists
a neighborhood $U_n\subset \prt \Om$ of $0$ such that the harmonic measure of 
 $\{y\in U_n: |\nabla u_\Om(y)|_{\mathrm{mf}} /P(\Omega)> J(\Om)-1/n\}$ is strictly positive.
In view of Lemma \ref{lem:supbound} (a) \textit{(iv)} and \eqref{eq:compnv}, 
for every $n>0$ there exists
a neighborhood $U_n'\subset \prt \Om_1$ of $0$ such that the harmonic measure of 
 $\{y\in U_n': |\nabla u_{\Om_1}(y)|_{\mathrm{mf}} /P(\Omega_1)> (1+\eps)(J(\Om)-1/n)\}$ is strictly positive.
This and Lemma \ref{lem:supbound} (a)  imply 
that $J_P(\Om_1) \geq (1+\eps) J_P(\Om)$. 
This contradicts the assumption that $\Om$ is an optimizer.
\end{proof}

\section*{Acknowledgments}

We are grateful to Hiroaki Aikawa, Dorin Bucur, Stefan Steinerberger and Ruofei Yao for very helpful advice. We are also thankful to Chiu-Yen Kao for her help with the numerical approximation of the optimal shapes presented in Table \ref{tab:max_area_peri}. 

This project started at a workshop at the American Institute of Mathematics. The authors are grateful to AIM for the excellent research environment.

\bibliographystyle{plain}
\bibliography{torsion}

\end{document}